\date{}
\newlength{\defbaselineskip}
\newcommand{\setlinespacing}[1]%
           {\setlength{\baselineskip}{#1 \defbaselineskip}}
\newcommand{\bR}{{\mathbb{R}}}
\newcommand{\N}{{\mathbb{N}}}
\newcommand{\actaqed}{\hfill $\actabox$}
{\medskip\noindent \textit{Proof of #1. }}%
{\actaqed \medskip}
\def\D{{\mathcal D}}
\def\Di{{\mathcal D}}
\def\C{{\mathcal C}}
\def \Tr{\mathcal T}
\def \U{\mathcal U}
\def \cM{\mathcal M}
\def\R{{\mathbb R}}
\def\Z{\mathbb Z}
\def \T{\mathbb T}
\def\bP{\mathbb P}
\def\bE{\mathbb E}
\def \<{\langle}
\def\>{\rangle}
\def \La{\Lambda}
\def \ep{\epsilon}
\def \e{\varepsilon}
\def \de{\delta}
\def \ff{\varphi}
\def\al{\alpha}
\def\bt{\beta}
\def\ga{\gamma}
\def\la{\lambda}
\def \sp{\operatorname{span}}
\def\bx{\mathbf x}
\def\by{\mathbf y}
\def\bz{\mathbf z}
\def\bk{\mathbf k}
\def\bn{\mathbf n}
\def\bs{\mathbf s}
\def\bN{\mathbf N}
\def\bW{\mathbf W}
\def\bt{\beta}
\def\mb{{\bar m}}
\newtheorem{Theorem}{Theorem}[section]
\newtheorem{Lemma}{Lemma}[section]
\newtheorem{Proposition}{Proposition}[section]
\newtheorem{Remark}{Remark}[section]
\numberwithin{equation}{section}
\newcommand{\be}{\begin{equation}}
\newcommand{\ee}{\end{equation}}
\begin{document}

\title{The Marcinkiewicz-type discretization theorems}
\author{  V.N. Temlyakov\thanks{University of South Carolina and Steklov Institute of Mathematics.  }}
\maketitle
\begin{abstract}
{The paper is devoted to discretization of integral norms of functions from
a given finite dimensional subspace. This problem is very important in applications but there is no systematic study of it. We present here a new technique, which works well for discretization of the integral norm. It is a combination of probabilistic technique, based on chaining, with results on the entropy numbers in the uniform norm. }
\end{abstract}

\section{Introduction} 
\label{I} 

Discretization is 
a very important step in making a continuous problem computationally feasible. The problem of construction of good sets of points in a multidimensional domain is a fundamental problem of mathematics and computational mathematics. 
A prominent  example of classical discretization problem is a problem of metric entropy (covering numbers, entropy numbers). 
Bounds for the entropy numbers of function classes are important by themselves and also have important connections to other fundamental problems (see, for instance, \cite{Tbook}, Ch.3 and \cite{DTU}, Ch.6). 
Another prominent example of a discretization problem is the problem of numerical integration.  Numerical integration in the mixed smoothness classes requires deep number theoretical results for constructing optimal (in the sense of order) cubature formulas (see, for instance, \cite{DTU}, Ch.8). A typical approach to solving a continuous problem numerically -- the Galerkin method -- 
suggests to look for an approximate solution from a given finite dimensional subspace. A standard way to measure an error of approximation is an appropriate $L_q$ norm, $1\le q\le\infty$. Thus, the problem of   discretization of the $L_q$ norms of functions from a given finite dimensional subspace arises in a very natural way.

The main goal of this paper is to study the discretization problem for a finite dimensional subspace $X_N$ of a Banach space $X$. We are interested in 
discretizing the $L_q$, $1\le q\le \infty$, norm of elements of $X_N$. We call 
such results the Marcinkiewicz-type discretization theorems. There are different 
settings and different ingredients, which play important role in this problem. 
We now discuss these issues. 

{\bf Marcinkiewicz problem.} Let $\Omega$ be a compact subset of $\R^d$ with the probability measure $\mu$. We say that a linear subspace $X_N$ of the $L_q(\Omega)$, $1\le q < \infty$, admits the Marcinkiewicz-type discretization theorem with parameters $m$ and $q$ if there exist a set $\{\xi^\nu \in \Omega, \nu=1,\dots,m\}$ and two positive constants $C_j(d,q)$, $j=1,2$, such that for any $f\in X_N$ we have
\be\label{1.1}
C_1(d,q)\|f\|_q^q \le \frac{1}{m} \sum_{\nu=1}^m |f(\xi^\nu)|^q \le C_2(d,q)\|f\|_q^q.
\ee
In the case $q=\infty$ we define $L_\infty$ as the space of continuous on $\Omega$ functions and ask for 
\be\label{1.2}
C_1(d)\|f\|_\infty \le \max_{1\le\nu\le m} |f(\xi^\nu)| \le  \|f\|_\infty.
\ee
We will also use a brief way to express the above property: the $\cM(m,q)$ theorem holds for  a subspace $X_N$ or $X_N \in \cM(m,q)$. 

{\bf Numerical integration problem.} In the case $1\le q<\infty$ the above problem can be reformulated as a problem 
on numerical integration of special classes of functions. Define a class
$|X_N|^q := \{|f|^q: f\in X_N, \|f\|_q \le 1\}$ and consider the numerical integration problem: for a given $\e>0$ find $m = m(N,q,\e)$ such that 
\be\label{1.3}
\inf_{\xi^1,\dots,\xi^m}\sup_{f\in X_N,\|f\|_q\le 1} \left|\frac{1}{m}\sum_{\nu=1}^m |f(\xi^\nu)|^q -\|f\|_q^q\right| \le \e.
\ee
In (\ref{1.3}) we limit our search for good numerical integration methods to 
cubature formulas with equal weights $1/m$. This special kind of cubature formulas is called the Quasi-Monte Carlo methods. In numerical integration 
general cubature formulas (with weights) are also very important. In this case 
the above problem (\ref{1.3}) is reformulated as follows
\be\label{1.4}
\inf_{\xi^1,\dots,\xi^m;\la_1,\dots,\la_m}\sup_{f\in X_N,\|f\|_q\le 1} \left|\sum_{\nu=1}^m\la_\nu |f(\xi^\nu)|^q -\|f\|_q^q\right| \le \e.
\ee
Thus, in this case we are optimizing both over the knots $\xi^1,\dots,\xi^m$ and over the weights $\la_1,\dots,\la_m$. 

{\bf Marcinkiewicz problem with weights.} The above remark on numerical integration encourages us to consider the following variant of the Marcinkiewicz problem. We say that a linear subspace $X_N$ of the $L_q(\Omega)$, $1\le q < \infty$, admits the weighted Marcinkiewicz-type discretization theorem with parameters $m$ and $q$ if there exist a set of knots $\{\xi^\nu \in \Omega\}$, a set of weights $\{\la_\nu\}$, $\nu=1,\dots,m$, and two positive constants $C_j(d,q)$, $j=1,2$, such that for any $f\in X_N$ we have
\be\label{1.5}
C_1(d,q)\|f\|_q^q \le  \sum_{\nu=1}^m \la_\nu |f(\xi^\nu)|^q \le C_2(d,q)\|f\|_q^q.
\ee
Then we also say that the $\cM^w(m,q)$ theorem holds for  a subspace $X_N$ or $X_N \in \cM^w(m,q)$. 
Obviously, $X_N\in \cM(m,q)$ implies that $X_N\in \cM^w(m,q)$. 

{\bf Marcinkiewicz problem with $\e$.} We write $X_N\in \cM(m,q,\e)$ if (\ref{1.1}) holds with $C_1(d,q)=1-\e$ and $C_2(d,q)=1+\e$.  Respectively, 
we write $X_N\in \cM^w(m,q,\e)$ if (\ref{1.5}) holds with $C_1(d,q)=1-\e$ and $C_2(d,q)=1+\e$. We note that the most powerful results are for $\cM(m,q,0)$, 
when the $L_q$ norm of $f\in X_N$ is discretized exactly by the formula with equal weights $1/m$. 

In this paper we mostly concentrate on the Marcinkiewicz problem and on its variant with $\e$. Our main results are for $q=1$. We now give some general remarks for the case $q=2$, which illustrate the problem. We discuss the case $q=2$ in more detail in Section \ref{L2}. We describe the properties of the subspace $X_N$ in terms of a system $\U_N:=\{u_i\}_{i=1}^N$ of functions such that
$X_N = \sp\{u_i, i=1,\dots,N\}$. In the case $X_N \subset L_2$ we assume that 
the system is orthonormal on $\Omega$ with respect to measure $\mu$. In the case of real functions   we associate with $x\in\Omega$ the matrix $G(x) := [u_i(x)u_j(x)]_{i,j=1}^N$. Clearly, $G(x)$ is a symmetric positive semi-definite matrix of rank $1$. 
It is easy to see that for a set of points $\xi^k\in \Omega$, $k=1,\dots,m$, and $f=\sum_{i=1}^N b_iu_i$ we have
$$
 \sum_{k=1}^m\la_k f(\xi^k)^2 - \int_\Omega f(x)^2 d\mu = {\mathbf b}^T\left(\sum_{k=1}^m \la_k G(\xi^k)-I\right){\mathbf b},
$$
where ${\mathbf b} = (b_1,\dots,b_N)^T$ is the column vector and $I$ is the identity matrix. Therefore, 
the $\cM^w(m,2)$ problem is closely connected with a problem of approximation (representation) of the identity matrix $I$ by an $m$-term approximant with respect to the system $\{G(x)\}_{x\in\Omega}$. It is easy to understand that under our assumptions on the system $\U_N$ there exist a set of knots $\{\xi^k\}_{k=1}^m$ and a set of weights $\{\la_k\}_{k=1}^m$, with $m\le N^2$ such that
$$
I = \sum_{k=1}^m \la_k G(\xi^k)
$$
and, therefore, we have for any $X_N \subset L_2$ that
\be\label{1.6}
X_N \in \cM^w(N^2,2,0).
\ee
However, we do not know a characterization of those $X_N$ for which 
$X_N \in \cM(N^2,2,0)$. 

In the above formulations of the problems we only ask about existence of 
either good $\{\xi^\nu\}$ or good $\{\xi^\nu,\la_\nu\}$. Certainly, it is important to 
have either explicit constructions of good $\{\xi^\nu\}$ ($\{\xi^\nu,\la_\nu\}$) or 
deterministic ways to construct good $\{\xi^\nu\}$ ($\{\xi^\nu,\la_\nu\}$). Thus, the 
Marcinkiewicz-type problem can be split into the following four problems: under some assumptions on $X_N$

(I) Find a condition on $m$ for $X_N \in \cM(m,q)$;

(II) Find a condition on $m$ for $X_N \in \cM^w(m,q)$;

(III) Find a condition on $m$ such that there exists a deterministic construction 
of $\{\xi^\nu\}_{\nu=1}^m$ satisfying (\ref{1.1}) for all $f\in X_N$;

(IV) Find a condition on $m$ such that there exists a deterministic construction 
of $\{\xi^\nu,\la_\nu\}_{\nu=1}^m$ satisfying (\ref{1.5}) for all $f\in X_N$.

The main results of this paper address the problem (I) in the case $q=1$. 
Our method is probabilistic. 

We impose the following assumptions on the system $\{u_i\}_{i=1}^N$ of real functions. 

{\bf A.} There exist $\alpha>0$, $\beta$, and $K_1$ such that for all $i\in [1,N]$ we have
\be\label{4.1i}
|u_i(\bx)-u_i(\by)| \le K_1N^\beta\|\bx-\by\|_\infty^\alpha,\quad \bx,\by \in \Omega.
\ee

{\bf B.} There exists a constant $K_2$ such that $\|u_i\|_\infty^2 \le K_2$, $i=1,\dots,N$.  
 
{\bf C.} Denote $X_N:= \sp(u_1,\dots,u_N)$. There exist two constants $K_3$ and $K_4$ such that the following Nikol'skii-type inequality holds for all $f\in X_N$
\be\label{4.3i} 
\|f\|_\infty \le K_3N^{K_4/p}\|f\|_p,\quad p\in [2,\infty).
\ee

The main result of this paper is the following theorem (see Theorem \ref{T4.9}).

\begin{Theorem}\label{T1.1} Suppose that a real orthonormal system $\{u_i\}_{i=1}^N$ satisfies conditions {\bf A}, {\bf B}, and {\bf C}. Then there exists a set of $m \le C_1N(\log N)^{7/2}$ points $\xi^j\in \Omega$, $j=1,\dots,m$, $C_1=C(d,K_1,K_2,K_3,K_4,\Omega,\alpha,\beta)$, such that for any $f\in X_N$ 
we have
$$
\frac{1}{2}\|f\|_1 \le \frac{1}{m}\sum_{j=1}^m |f(\xi^j)| \le \frac{3}{2}\|f\|_1.
$$
\end{Theorem}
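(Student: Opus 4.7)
The approach I would take is probabilistic: sample $\xi^1,\dots,\xi^m$ i.i.d.\ from $\mu$ and show that, with positive probability, $\sup_{f\in F}|\tfrac{1}{m}\sum_{j=1}^m|f(\xi^j)|-\|f\|_1|\le \tfrac{1}{2}$, where $F:=\{f\in X_N:\|f\|_1\le 1\}$. At a single $f$ the variables $|f(\xi^j)|$ are i.i.d., bounded by $\|f\|_\infty$, with mean $\|f\|_1$ and variance at most $\|f\|_2^2$, so Bernstein's inequality controls pointwise deviation; the real task is to turn this into a bound uniform over $F$, which I would do by chaining in the uniform norm, fed by the smoothness and entropy that conditions \textbf{A}, \textbf{B}, \textbf{C} provide.

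First I would extract the two size estimates that feed Bernstein. Combining the Nikol'skii inequality \textbf{C} with the elementary interpolation $\|f\|_2^2\le\|f\|_\infty\|f\|_1$ at $p=2$ gives the self-improving bound
\[
\|f\|_\infty\le K_3^2 N^{K_4}\|f\|_1,\qquad \|f\|_2^2\le K_3^2 N^{K_4}\|f\|_1^2,
\]
so every $f\in F$ sits in the $L_\infty$-ball of radius $K_3^2N^{K_4}$ and has $L_2$-norm at most $K_3 N^{K_4/2}$. Using condition \textbf{A} together with $|c_i|\le\|f\|_2$ for orthonormal coordinates yields a H\"older estimate $|f(\bx)-f(\by)|\lesssim N^{\beta+(K_4+1)/2}\|\bx-\by\|_\infty^\alpha$ on $F$, which lets me replace $\Omega$ by a fine net $\mathcal N\subset\Omega$ of cardinality polynomial in $N$ without affecting the discretization inequality I want to prove.

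The heart of the argument is the chaining. For each dyadic scale $\e_k=2^{-k}$ I pick an $\e_k$-net $F_k$ of $F$ in the sup norm on $\mathcal N$. Since $F$ lies in an $N$-dimensional subspace of diameter at most $2K_3^2N^{K_4}$ in $L_\infty$, a standard volume argument gives $\log|F_k|\lesssim N\log(N^{K_4}/\e_k)$. Writing $f=f_0+\sum_k(f_{k+1}-f_k)$ along this chain and applying Bernstein to each increment $|f_{k+1}|-|f_k|$, whose sup norm is at most $2\e_k$ and whose variance is at most $\|f_{k+1}-f_k\|_2^2\lesssim\e_k$ by the same interpolation inequality applied to $f_{k+1}-f_k$, I obtain a tail bound of the shape $\exp(-c\min(t_k^2m/\e_k,\,t_km/\e_k^{1/2}))$ at each level. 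A union bound over the pairs $(f_k,f_{k+1})$ costs $\exp(C N\log(N/\e_k))$, and choosing level deviations $t_k$ with $\sum_k t_k\le\tfrac12$ so that Bernstein beats the entropy at every scale forces $m\gtrsim N(\log N)^{7/2}$.

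The main obstacle will be producing exactly the exponent $7/2$: at each of the $\Theta(\log N)$ active scales one must balance the sub-Gaussian term in Bernstein against its sub-exponential term \emph{and} against the entropy cost $N\log(N/\e_k)$, and summing the level deviations $t_k$ introduces further $\sqrt{\log N}$ factors; a naive implementation yields a higher power of $\log N$, so the budgets $t_k$ and the net radii $\e_k$ must be tuned carefully. A minor additional point is that we discretize $|f|$ rather than $f$; this is absorbed using $||a|-|b||\le|a-b|$, which transfers sup-norm covering numbers from $F$ to $\{|f|:f\in F\}$ without loss.
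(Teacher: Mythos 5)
Your overall architecture---chaining in the sup norm, fed by entropy estimates for the unit $L_1$-ball $X_N^1$, with a Bernstein-type deviation inequality at each scale---is exactly the paper's architecture. The preliminary size estimates $\|f\|_\infty\le K_3^2N^{K_4}\|f\|_1$ and the reduction from $|f|$ to $f$ are fine, and the way you organize the union bound over scales is essentially the argument in the proof of Theorem \ref{T3.3}.

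The genuine gap is your entropy estimate. You propose a volume bound, $\log|F_k|\lesssim N\log(N^{K_4}/\e_k)$, equivalently $\e_k(X_N^1,L_\infty)\lesssim N^{K_4}2^{-ck/N}$. This is far too weak to yield $m\lesssim N(\log N)^{7/2}$. In the chaining computation one needs, at the dyadic scale $\delta_{j-1}=\e_{2^{j-1}}$ with level budget $\eta_j\asymp 1/\log N$, the inequality $m\eta_j^2/\delta_{j-1}\gtrsim 2^j$, i.e.\ $\delta_{j-1}\lesssim m/(2^j(\log N)^2)$. With $m\asymp N(\log N)^{7/2}$ this forces $\delta_{j-1}\lesssim (\log N)^{3/2}N/2^{j}$, which is precisely the content of Theorem \ref{T4.8}: $\e_k(X_N^1,L_\infty)\ll(\log N)^{3/2}N/k$ for $k\le N$. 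The volume bound only gives $\e_k\lesssim N^{K_4}$ in that range, so for scales with $2^j\asymp N$ your approach requires $m\gtrsim N^{1+K_4}(\log N)^2$---polynomially worse, and for $K_4\ge 1$ the budget constraint $\sum_k t_k\le 1/2$ cannot be met at all with $m$ near-linear in $N$. So it is not merely that the exponent $7/2$ requires tuning: the entropy input you are using is of the wrong order of magnitude.

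The piece you are missing is the sparse-approximation route to the entropy numbers (Section \ref{MX}). One first bounds $m$-term approximation of $X_N^1$ in $L_2$ with respect to a finite dictionary of normalized shifted ``Dirichlet kernels'' $\D_N(\cdot,\by^j)$ supported on a $\delta_0$-net of $\Omega$ (conditions \textbf{A}, \textbf{B} enter here via the Lipschitz control and the discretization of the net), getting $\sigma_m(X_N^1,\D^1)_2\lesssim(N/m)^{1/2}$ (Theorem \ref{T4.3}). Separately, one bounds $m$-term approximation of $X_N^2$ in $L_\infty$ with respect to the dictionary $\{\pm u_iK_2^{-1/2}\}$ by running the Incremental Algorithm in $L_p$ with $p\asymp\log N$ and invoking the Nikol'skii inequality \textbf{C}, yielding $\sigma_m(X_N^2,\D^2)_\infty\lesssim(N/m)^{1/2}(\log N)^{1/2}$ (Theorem \ref{T4.6}). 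Composing the two gives $\sigma_m(X_N^1,\D^1\cup\D^2)_\infty\lesssim(N/m)(\log N)^{1/2}$, and then the general inequality relating best $m$-term approximation to entropy numbers (Theorem \ref{T2.1} and Remark \ref{R2.1}) converts this into the sharp $\e_k\ll(\log N)^{3/2}N/k$ for $k\le N$. Your chaining is sound once fed with this input; the entropy bound itself is the nontrivial core of the theorem and is not accessible by a volume argument.
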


An important particular case for application of Theorem \ref{T1.1} is the case, when $X_N$ is a subspace of trigonometric polynomials. For a finite $Q\subset \Z^d$ denote
$$
\Tr(Q) := \{f: f(\bx) = \sum_{\bk\in Q}c_\bk e^{i(\bk,\bx)}\}.
$$
The hyperbolic cross polynomials $\Tr(Q_n)$ are of special interest (see, for instance, \cite{DTU}): let $\bs \in \Z^d_+$
$$
Q_n := \cup_{\|\bs\|_1 \le n} \rho(\bs),
$$
  and
$$
\rho(\bs):= \{\bk=(k_1,\dots,k_d)\in \Z^d_+ : [2^{s_j-1}]\le |k_j|<2^{s_j},\quad j=1,\dots,d\}
$$
where $[a]$ denotes the integer part of a number $a$.

The following two theorems were proved in \cite{VT160}.

\begin{Theorem}\label{T3.1} Let $d=2$. For any $n\in \N$ there exists a set of $m \le C_1|Q_n|n^{7/2}$ points $\xi^j\in \T^2$, $j=1,\dots,m$ such that for any $f\in \Tr(Q_n)$ 
we have
$$
C_2\|f\|_1 \le \frac{1}{m}\sum_{j=1}^m |f(\xi^j)| \le C_3\|f\|_1.
$$
\end{Theorem}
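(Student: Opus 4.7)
The plan is to derive Theorem \ref{T3.1} from Theorem \ref{T1.1} by choosing the right orthonormal basis of $\Tr(Q_n)$. Since $Q_n$ is symmetric under $\bk\mapsto -\bk$, its real-valued elements form a real subspace of $L_2(\T^2)$ of dimension $N=|Q_n|$, spanned by the orthonormal system $\U_N$ of normalized sines and cosines $\sqrt 2\cos(\bk\cdot\bx)$, $\sqrt 2\sin(\bk\cdot\bx)$ with $\bk$ ranging over a fundamental domain of the involution $\bk\mapsto -\bk$ on $Q_n\setminus\{0\}$, plus the constant $1$. I would verify \textbf{A}, \textbf{B}, \textbf{C} for $\U_N$, apply Theorem \ref{T1.1}, and then pass from real to complex $f$ by splitting $f=\Re f + i\Im f$, discretizing each piece separately, and combining via $\tfrac12(|g|+|h|)\le |g+ih|\le |g|+|h|$ both pointwise and after integration; this only rescales the constants $C_2,C_3$ in \eqref{1.1} by absolute factors.

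Condition \textbf{B} is trivial with $K_2\le 2$. For \textbf{A}, each $u_i$ is a trigonometric monomial of frequency $\bk$ with $\|\bk\|_\infty\le 2^n$, so $|u_i(\bx)-u_i(\by)|\le 2\cdot 2^n\|\bx-\by\|_\infty$; because $d=2$ gives $|Q_n|\asymp 2^n n$, we have $2^n\le CN$, so \eqref{4.1i} holds with $\alpha=\beta=1$ and an absolute $K_1$. The same relation $|Q_n|\asymp 2^n n$ also yields $\log N\asymp n$, which is the crucial bookkeeping fact that converts the $(\log N)^{7/2}$ of Theorem \ref{T1.1} into the advertised $n^{7/2}$.

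For \textbf{C} I would invoke the hyperbolic-cross Nikol'skii inequality: for every $f\in\Tr(Q_n)$ and every $p\in[2,\infty)$,
$$
\|f\|_\infty \;\le\; C\,2^{n/p}\,n^{(d-1)/p}\,\|f\|_p \;\le\; C'\,N^{1/p}\,\|f\|_p,
$$
so \eqref{4.3i} holds with $K_4=1$ and absolute $K_3$. This bound is standard: decomposing $f=\sum_{\|\bs\|_1\le n}f_\bs$ into dyadic blocks one applies the multivariate Nikol'skii inequality $\|f_\bs\|_\infty\le C\,2^{\|\bs\|_1/p}\|f_\bs\|_p$ to each block, sums over $\bs$ by Cauchy-Schwarz (there are $\asymp n$ blocks for $d=2$), and controls $\sum_\bs \|f_\bs\|_p^2$ by $\|f\|_p^2$ through the Littlewood--Paley square-function inequality available for $p\ge 2$.

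With \textbf{A}, \textbf{B}, \textbf{C} verified and all constants absolute, Theorem \ref{T1.1} produces a set of $m\le C_1 N(\log N)^{7/2}\le C_1'|Q_n|n^{7/2}$ points satisfying $\tfrac12\|f\|_1\le \tfrac1m\sum_j|f(\xi^j)|\le \tfrac32\|f\|_1$ for every real $f\in X_N$, and the complex case follows from the real/imaginary-part splitting described above. The main technical obstacle I expect is step \textbf{C}: since the exponent $K_4$ feeds back into the power of $\log N$ in Theorem \ref{T1.1}, any polylogarithmic slack picked up in the block-summation must be absorbed cleanly into the factor $N^{1/p}$; ensuring $K_4=O(1)$ rather than growing with $n$ is precisely what keeps the final exponent $7/2$ on the $n$-factor.
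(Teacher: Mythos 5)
Your overall route — take the real orthonormal basis $\{1,\sqrt2\cos(\bk\cdot\bx),\sqrt2\sin(\bk\cdot\bx)\}$ of the real part of $\Tr(Q_n)$, verify conditions \textbf{A}, \textbf{B}, \textbf{C}, apply Theorem \ref{T1.1}, and recombine real and imaginary parts via $|f|\asymp|\Re f|+|\Im f|$ — is legitimate and is exactly what the paper's remark ``Theorem \ref{T1.1} gives Theorem \ref{T3.1}'' has in mind, including the bookkeeping $\log N\asymp n$ from $|Q_n|\asymp 2^n n$ when $d=2$. It is worth noting, though, that the paper's own new proof of this bound does not go through the abstract conditions \textbf{A}--\textbf{C} at all: Theorem \ref{T3.3}, proved in Section \ref{M}, treats $\Tr(Q)$ directly by establishing the entropy bound of Theorem \ref{T2.6} with trigonometric-specific tools (the Dirichlet-kernel sampling identity (\ref{2.6Q}), the Marcinkiewicz $L_1$ inequality (\ref{2.5}), and the constructive $L_\infty$ $m$-term result of Theorem \ref{T2.5}) and then runs the same chaining argument. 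Your reduction is the abstract-theorem specialization; the paper's Section \ref{M} argument is the parallel, more hands-on version.

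The one genuine gap is your verification of \textbf{C}. The inequality $\|f\|_\infty\le C\,N^{1/p}\|f\|_p$ for $f\in\Tr(Q_n)$ with $C$ independent of $n$ and $p$ (i.e.\ $K_4=1$) is not correct: testing on the Dirichlet kernel $D_{Q_n}$ shows a loss of a power of $n$ as $p\to\infty$. Your Littlewood--Paley $+$ Cauchy--Schwarz derivation also does not deliver it: the multi-parameter Littlewood--Paley constant grows with $p$ (which is exactly the regime $p\asymp\log N\asymp n$ that Theorem \ref{T4.5} forces you into), the number of blocks $\bs$ with $\|\bs\|_1\le n$ is $\asymp n^2$ for $d=2$ (not $\asymp n$), and at $p\asymp n$ the factor $\bigl(\sum_{\|\bs\|_1\le n}2^{2\|\bs\|_1/p}\bigr)^{1/2}$ is $\asymp n$, not $O(1)$, so the output is $\|f\|_\infty\lesssim n\,\|f\|_p$ rather than $\lesssim\|f\|_p$. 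Fortunately the fix is simple and actually removes all the Littlewood--Paley machinery: since $\Tr(Q_n)\subset\Tr(\Pi(\bN))$ with $\bN=(2^n,\dots,2^n)$, the classical rectangular Nikol'skii inequality $\|f\|_\infty\le C(d)\,\vartheta(\bN)^{1/p}\|f\|_p$ (with $C(d)$ uniform in $p$; e.g.\ raise to the $p$-th power and apply the $L_1\to L_\infty$ bound to a polynomial of degree $p\bN$) gives $\|f\|_\infty\le C'2^{nd/p}\|f\|_p\le C''N^{d/p}\|f\|_p$, so \textbf{C} holds with $K_4=d$ and absolute $K_3$. Since Theorem \ref{T1.1} only needs $K_4$ to be a fixed constant (it enters through the choice $p\asymp K_4\log N$ and hence only through the implied constant $C_1$), $K_4=d$ is just as good as $K_4=1$, and the final bound $m\ll|Q_n|n^{7/2}$ stands.
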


\begin{Theorem}\label{T3.2} For any $d\in \N$ and $n\in \N$ there exists a set of \newline $m \le C_1(d)|Q_n|n^{d/2+3}$ points $\xi^j\in \T^d$, $j=1,\dots,m$ such that for any $f\in \Tr(Q_n)$ 
we have
$$
C_2\|f\|_1 \le \frac{1}{m}\sum_{j=1}^m |f(\xi^j)| \le C_3\|f\|_1.
$$
\end{Theorem}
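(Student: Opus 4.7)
The plan is to derive Theorem \ref{T3.2} as an application of Theorem \ref{T1.1} to the subspace $X_N=\Tr(Q_n)\subset L_2(\T^d,d\mu)$ with $d\mu$ the normalized Haar measure. Set $N:=|Q_n|\asymp 2^n n^{d-1}$ (so $\log N\asymp n$) and take as the real orthonormal basis the rescaled real and imaginary parts of $e^{i(\bk,\bx)}$, $\bk\in Q_n$; passing to real functions at most doubles $N$ and is absorbed into the constants.

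Conditions {\bf A} and {\bf B} are routine. Condition {\bf B} holds with $K_2=2$ since $|e^{i(\bk,\bx)}|=1$. For {\bf A} I would use $|e^{i(\bk,\bx)}-e^{i(\bk,\by)}|\le\|\bk\|_1\|\bx-\by\|_\infty$ together with the bound $\|\bk\|_1\le d\cdot 2^n\le C(d)N$ for $\bk\in Q_n$, giving {\bf A} with $\alpha=\beta=1$. The substantive step is condition {\bf C}. Decomposing $f=\sum_{\|\bs\|_1\le n}f_\bs$ into the dyadic pieces $f_\bs\in\Tr(\rho(\bs))$, the one-block Nikol'skii bound $\|f_\bs\|_\infty\le C(d)\,2^{\|\bs\|_1/p}\|f_\bs\|_p$ combined with the Littlewood--Paley square function estimate in $L_p$ for $p\ge 2$ and H\"older over the $\asymp n^{d-1}$ active indices $\bs$ yields
\[
\|f\|_\infty \le C(d)\,|Q_n|^{1/p}\,n^{(d-1)(1/2-1/p)}\,\|f\|_p,\qquad f\in\Tr(Q_n),\ p\in[2,\infty),
\]
so the effective Nikol'skii constant in {\bf C} itself carries a factor up to $n^{(d-1)/2}$ in dimension $d$.

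Finally, tracing this $n$-dependent constant through the chaining proof of Theorem \ref{T1.1} and verifying that it contributes linearly to the final point count gives $m\le C(d)\,N(\log N)^{7/2}\cdot n^{(d-1)/2}=C(d)\,|Q_n|\,n^{d/2+3}$, which is the claimed bound. The main obstacle is precisely this bookkeeping: the stated form of Theorem \ref{T1.1} absorbs the Nikol'skii constant into the unspecified $C_1$, but for $\Tr(Q_n)$ in dimension $d\ge 2$ that constant genuinely grows like $n^{(d-1)/2}$, so to extract the dimension-dependent exponent $d/2+3$ one cannot invoke Theorem \ref{T1.1} as a black box --- instead one has to re-enter its chaining argument and check that the Nikol'skii bound enters as a single multiplicative factor rather than being raised to a higher power by the summation over chaining levels. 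When $d=2$ the procedure produces the slightly weaker bound $|Q_n|n^4$, confirming that Theorem \ref{T3.2} is indeed a uniform-in-$d$ statement worse than the specialized Theorem \ref{T3.1}.
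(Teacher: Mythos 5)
The paper does not prove Theorem~\ref{T3.2}; it explicitly says ``The following two theorems were proved in \cite{VT160}'' and only cites it. The route in \cite{VT160} was to establish the entropy estimate (\ref{6.3}), which carries a factor $n^{d/2}$, and then run the chaining argument (as in the proof of Theorem~\ref{T3.3}); this yields $m\ll |Q_n|n^{d/2+3}$. Your route is genuinely different: you take the general Section~\ref{MX} machinery (condition~{\bf A}--{\bf C} $\Rightarrow$ Theorem~\ref{T4.8} $\Rightarrow$ Theorem~\ref{T1.1}) and track the $n$-dependence of the hyperbolic-cross Nikol'skii constant. Your observation that condition~{\bf C} fails for $\Tr(Q_n)$ with $n$-independent $K_3,K_4$ when $d\ge 2$ is correct: the sharp bound is $\|f\|_\infty\ll n^{(d-1)/2}2^{n/p}\|f\|_p$, and with $p\asymp\log N\asymp n$ the factor $N^{K_4/p}$ is $O(1)$ while the true constant is $\asymp n^{(d-1)/2}$. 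Your bookkeeping that this extra factor passes linearly through Theorems~\ref{T4.5}--\ref{T4.8} and then through the chaining (since $J\ll n$ and $\eta_j\asymp 1/n$ are unaffected) is sound and does land on $7/2+(d-1)/2=d/2+3$.

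Two comments on what this buys and what it misses. First, your analysis exposes a real looseness in the paper's own claim that ``Theorem~\ref{T1.1} gives Theorem~\ref{T3.1}'': for $d\ge 2$, $\Tr(Q_n)$ does not literally satisfy condition~{\bf C}, so Theorem~\ref{T1.1} cannot be invoked as a black box; what actually delivers Theorems~\ref{T3.1} and~\ref{T3.3} is the separate, trigonometric-specific argument of Section~\ref{M}. Second --- and this is the substantive point you miss --- the paper's Theorem~\ref{T3.3} achieves $m\ll|Q|n^{7/2}$ for all $d$, strictly better than the $n^{d/2+3}$ you obtain. The source of that saving is not a sharper hyperbolic-cross Nikol'skii inequality: it is Theorem~\ref{T2.5}, which performs $m$-term $L_\infty$-approximation with respect to the dictionary $\D^\Tr(Q)$ after embedding $\Tr(Q)\subset\Tr(\Pi(\bN))$ with $N_j=2^n$. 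For the full cube the Nikol'skii constant is $\vartheta(\bN)^{1/p}$ with no extra polylogarithmic factor, so choosing $p\asymp nd$ gives $\|f-G^\infty_m(f)\|_\infty\ll(nd/m)^{1/2}|\Lambda|^{1/2}\|f\|_2$, i.e.\ only an $n^{1/2}$ loss instead of $n^{d/2}$. This yields the entropy bound $\e_k(\Tr(Q)_1,L_\infty)\ll n^{3/2}|Q|/k$ (Theorem~\ref{T2.6}) with no $d$-dependent exponent, and hence $m\ll|Q|n^{7/2}$. So your proposal does reconstruct the old $n^{d/2+3}$ bound by a plausible alternative route, but it is suboptimal relative to the paper, and the gap is precisely the one your own remarks circle around: the hyperbolic-cross Nikol'skii inequality is not the right tool here, and the way around it is the embedding into the full cube inside Theorem~\ref{T2.5}, not a finer accounting of the $n^{(d-1)/2}$ loss.
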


Theorem \ref{T3.1} addresses the case $d=2$ and Theorem \ref{T3.2} extends 
Theorem \ref{T3.1} to the case of all $d$. We point out that for $d=2$ 
Theorem \ref{T3.2} is weaker than Theorem \ref{T3.1}. Theorem \ref{T1.1} gives 
Theorem \ref{T3.1} and improves Theorem \ref{T3.2} by replacing an extra factor $n^{d/2+3}$ by $n^{7/2}$ in the bound for $m$. The technique for proving 
Theorem \ref{T1.1} presented in this paper is a development of technique from \cite{VT160}. It is a combination of probabilistic technique, based on chaining, with results on the entropy numbers. We present this technique in the following way. In Section \ref{entropy} we discuss new elements of a method, which gives good upper bounds for the entropy  numbers of the unit $L_1$ ball $\Tr(Q)_1$ in the $L_\infty$ norm. In Section \ref{M} we present results on the 
Marcinkiewicz-type theorems for the trigonometric polynomials. In Section \ref{MX} we show how the technique developed in Section \ref{M} for the trigonometric polynomials can be generalized for subspaces $X_N$ satisfying conditions {\bf A}, {\bf B}, and {\bf C}. The main results of the paper are in Sections \ref{entropy} -- \ref{MX}. They are about discretization theorems in $L_1$. In Section \ref{L2} we give some comments on the discretization theorems in $L_2$. This case (the $L_2$ case) is much better understood than the $L_1$ case and it has nice connections to recent strong results on submatrices of orthogonal matrices and on random matrices.

\section{The entropy numbers of $\Tr(Q)_1$}
\label{entropy}

We begin with the definition of the entropy numbers.
  Let $X$ be a Banach space and let $B_X$ denote the unit ball of $X$ with the center at $0$. Denote by $B_X(y,r)$ a ball with center $y$ and radius $r$: $\{x\in X:\|x-y\|\le r\}$. For a compact set $A$ and a positive number $\e$ we define the covering number $N_\e(A)$
 as follows
$$
N_\e(A) := N_\e(A,X) 
:=\min \{n : \exists y^1,\dots,y^n, y^j\in A :A\subseteq \cup_{j=1}^n B_X(y^j,\e)\}.
$$
It is convenient to consider along with the entropy $H_\e(A,X):= \log_2 N_\e(A,X)$ the entropy numbers $\e_k(A,X)$:
$$
\e_k(A,X)  :=\inf \{\e : \exists y^1,\dots ,y^{2^k} \in A : A \subseteq \cup_{j=1}
^{2^k} B_X(y^j,\e)\}.
$$
In our definition of $N_\e(A)$ and $\e_k(A,X)$ we require $y^j\in A$. In a standard definition of $N_\e(A)$ and $\e_k(A,X)$ this restriction is not imposed. 
However, it is well known (see \cite{Tbook}, p.208) that these characteristics may differ at most by a factor $2$.

We use the technique developed in \cite{VT156}, which is based on the following two steps strategy. At the first step we obtain bounds of the best $m$-term approximations with respect to a dictionary. At the second step we use general inequalities relating the entropy numbers to the best $m$-term approximations. We begin the detailed discussion with the second step of the above strategy.
Let $\D=\{g_j\}_{j=1}^N$ be a system of elements of cardinality $|\D|=N$ in a Banach space $X$. Consider best $m$-term approximations of $f$ with respect to $\D$
$$
\sigma_m(f,\D)_X:= \inf_{\{c_j\};\Lambda:|\Lambda|=m}\|f-\sum_{j\in \Lambda}c_jg_j\|.
$$
For a function class $F$ set
$$
\sigma_m(F,\D)_X:=\sup_{f\in F}\sigma_m(f,\D)_X.
$$
The following results are from \cite{VT138}.
\begin{Theorem}\label{T2.1} Let a compact $F\subset X$ be such that there exists a   system $\D$, $|\D|=N$, and  a number $r>0$ such that 
$$
  \sigma_m(F,\D)_X \le m^{-r},\quad m\le N.
$$
Then for $k\le N$
\begin{equation}\label{2.1}
\e_k(F,X) \le C(r) \left(\frac{\log(2N/k)}{k}\right)^r.
\end{equation}
\end{Theorem}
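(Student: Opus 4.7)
The plan is to build a covering of $F$ by chaining (telescoping) best $m$-term approximants at dyadic scales, encoding each scale with a support-selection code together with a coefficient-quantization code, and balancing the total codebook size against the target $2^k$. The key point is that at each scale the coefficient-quantization cost will be only $O(1)$ per selected dictionary element, which is what will ultimately yield $\log(2N/k)$ rather than $\log N$ in the bound.

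Fix dyadic levels $m_j=2^j$, $j=0,1,\dots,J$, with $J$ to be chosen. For each $f\in F$ pick a best $m_j$-term approximant $f_j$ with $\|f-f_j\|_X\le m_j^{-r}$ and telescope
\[
f=f_0+\sum_{j=1}^{J}(f_j-f_{j-1})+(f-f_J).
\]
The increment $d_j:=f_j-f_{j-1}$ is an $(m_j+m_{j-1})$-term combination from $\D$, so its support has size $\le 3m_{j-1}$, and by the triangle inequality $\|d_j\|_X\le 2m_{j-1}^{-r}$; the tail satisfies $\|f-f_J\|_X\le m_J^{-r}$. I would then cover the set of possible $d_j$ in $X$ to precision $\eta_j:=m_{j-1}^{-r}$: this set sits inside $\Sigma_{3m_{j-1}}(\D)\cap B_X(0,2m_{j-1}^{-r})$, which is a union of $\binom{N}{3m_{j-1}}$ pieces, each contained in a $3m_{j-1}$-dimensional Banach subspace intersected with an $X$-ball of radius $2m_{j-1}^{-r}$. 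The standard volumetric bound covers a radius-$R$ ball in a $d$-dimensional Banach space by $(1+2R/\eta)^d$ balls of radius $\eta$, so with $R/\eta_j=2$ each subspace contributes at most $5^{3m_{j-1}}$ balls, and the log-codebook at level $j$ is $O\!\big(m_{j-1}\log(eN/m_{j-1})\big)$.

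Concatenating codes across all levels produces an $\big(\sum_{j=0}^{J}\eta_j+m_J^{-r}\big)$-net of $F$ of cardinality at most $\prod_{j=1}^{J}\big[\binom{N}{3m_{j-1}}5^{3m_{j-1}}\big]$ times the cost of coding $f_0$. The support-selection cost is geometric in $j$ and is dominated by the top level, so the total bit budget is $O(m_J\log(eN/m_J))$; setting this equal to $k$ and bootstrapping $\log(eN/m_J)\asymp\log(2N/k)$ yields $m_J\asymp k/\log(2N/k)$. Since the $\eta_j$ decay geometrically with final value $\asymp m_J^{-r}$, the net's radius is $O(m_J^{-r})=O\!\big((\log(2N/k)/k)^r\big)$, which by the definition of $\e_k$ gives the claimed bound, with $C(r)$ absorbing the quantization and geometric-sum constants that depend only on $r$.

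The main obstacle is the scale balancing: choosing $\eta_j$ strictly smaller than $m_{j-1}^{-r}$ would blow up the per-coordinate quantization cost by an extra logarithmic factor, while choosing it larger would make $\sum_j\eta_j$ exceed the final error target $m_J^{-r}$. A secondary technical point is verifying that the support-selection cost is truly dominated by the top dyadic level so that $\log(2N/k)$, not $\log N$, appears in the final rate; this amounts to a monotonicity check on $x\mapsto x\log(eN/x)$ and must be handled carefully in the regime $k\sim N$, where the chain has to be truncated so that $3m_J\le N$ and the final few levels are merged into one terminal step.
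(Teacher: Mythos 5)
There is a genuine error in the error-accounting step of your chaining argument. You set the per-level covering precision to $\eta_j := m_{j-1}^{-r}$, i.e.\ $\eta_j = 2^{-(j-1)r}$, and then claim that ``since the $\eta_j$ decay geometrically with final value $\asymp m_J^{-r}$, the net's radius is $O(m_J^{-r})$.'' But the radius of your net is $\sum_{j=1}^J \eta_j + m_J^{-r}$, and a decreasing geometric series is dominated by its \emph{first} (largest) term, not its last: $\sum_{j=1}^{J} m_{j-1}^{-r} = \sum_{\ell=0}^{J-1} 2^{-\ell r} = O(1)$, not $O(m_J^{-r})$. So as written your construction only produces a net of radius $O(1)$, which is vacuous. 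This is not a minor slip: the whole point of the theorem is to get a power of $k$ in the denominator, and your choice of $\eta_j$ --- precision comparable to the size of the increment it quantizes --- throws that power away at the coarse levels.

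The fix is to tie $\eta_j$ to the \emph{final} target precision rather than to the current increment size: take, for instance, $\eta_j = c\, m_J^{-r}\, \theta^{\,J-j}$ for a fixed $\theta\in(0,1)$. Then $\sum_{j=1}^J \eta_j \asymp m_J^{-r}$, as required. The price is that the ratio $\|d_j\|_X/\eta_j \lesssim 2\,(m_J/m_{j-1})^r\,\theta^{-(J-j)}$ now grows with $J-j$, so the per-coordinate quantization cost at level $j$ is $O\bigl((J-j)(r+\log(1/\theta))\bigr)$ rather than the $O(1)$ you wanted; but since the number of active coordinates at level $j$ is $O(m_{j-1})=O(2^{j-1})$, the total coefficient-coding budget is $\sum_{j=1}^J 2^{j-1}(J-j)\,O(r) = O(r\,m_J)$, which is still linear in $m_J$ --- the geometric decay in the number of coordinates absorbs the polynomial growth in per-coordinate cost. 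Your support-selection accounting (dominated by the top level, giving $O(m_J\log(eN/m_J))$) and the final balancing $m_J\asymp k/\log(2N/k)$ are then sound, as is the terminal-level truncation so that $3m_J\le N$ in the regime $k\sim N$. With this change of $\eta_j$ the argument goes through and gives the stated bound.
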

\begin{Remark}\label{R2.1} Suppose that a compact $F$ from Theorem \ref{T2.1} belongs to an $N$-dimensional subspace $X_N:=\sp(\D)$. Then in addition to (\ref{2.1}) we have 
 for $k\ge N$
\begin{equation}\label{2.2}
\e_k(F,X) \le C(r)N^{-r}2^{-k/(2N)}.
\end{equation}
\end{Remark}
We point out that Remark \ref{R2.1} is formulated for a complex Banach space $X$. In the case of real Banach space $X$ we have $2^{-k/N}$ instead of $2^{-k/(2N)}$ in (\ref{2.2}).

We begin with the best $m$-term approximation of elements of $\Tr(Q)_1:=\{f\in\Tr(Q):\|f\|_1\le 1\}$ in $L_2$ with respect to a special dictionary $\D^1:=\D^1(Q)$ associated with $Q$. 
Denote
$$
\D_Q(\bx) := \sum_{\bk\in Q} e^{i(\bk,\bx)},\quad w_Q := |Q|^{-1/2}\D_Q.
$$
Then $\|w_Q\|_2 =1$. Consider the dictionary
$$
\D^1:= \D^1(Q):= \{w_Q(\bx-\by)\}_{\by\in\T^d}.
$$
For a dictionary $\D$ in a Hilbert space $H$ with an inner product $\<\cdot,\cdot\>$ denote by $A_1(\D)$ the closure 
of the convex hull of the dictionary $\D$. In the case of complex Hilbert space define the symmetrized dictionary $\D^s := \{e^{i\theta} g: g\in \D, \theta\in [0,2\pi]\}$.
We use the Weak Orthogonal Greedy Algorithm (Weak Orthogonal Matching Pursuit) for $m$-term approximation. We remind the corresponding definition and formulate the know result, which we will use. 

 {\bf Weak Orthogonal Greedy Algorithm (WOGA).} Let $t\in (0,1]$ be a weakness parameter. We define
 $f^{o,t}_0 :=f$. Then for each $m\ge 1$ we inductively define:

(1) $\varphi^{o,t}_m \in \D$ is any element satisfying
$$
|\langle f^{o,t}_{m-1},\varphi^{o,t}_m\rangle | \ge t
\sup_{g\in \D} |\langle f^{o,t}_{m-1},g\rangle |.
$$

(2) Let $H_m^t := \sp (\varphi_1^{o,t},\dots,\varphi^{o,t}_m)$ and let
$P_{H_m^t}(f)$ denote an operator of orthogonal projection onto $H_m^t$.
Define
$$
G_m^{o,t}(f,\D) := P_{H_m^t}(f).
$$

(3) Define the residual after $m$th iteration of the algorithm
$$
f^{o,t}_m := f-G_m^{o,t}(f,\D).
$$

In the case $t=1$ the   WOGA is called the Orthogonal
Greedy Algorithm (OGA).  The following theorem is from \cite{T1} (see also \cite{Tbook}, Ch.2).

\begin{Theorem}\label{T2.2}
Let $\D$ be an arbitrary dictionary in $H$. Then for each $f \in
A_1(\D^s)$ we have
\begin{equation}\label{2.3}
\|f-G^{o,t}_m(f,\D)\| \le (1+mt^2)^{-1/2}.
\end{equation}
\end{Theorem}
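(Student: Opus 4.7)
The strategy is the standard two-part argument that underlies all greedy-algorithm convergence rates: control the one-step error decrement via the greedy criterion, then iterate via a reciprocal recursion.

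First I would set $a_m := \|f_m^{o,t}\|^2$ and establish the ``greedy lower bound'' for the inner product at each step. Since $f_{m-1}^{o,t} = f - P_{H_{m-1}^t}(f)$ is orthogonal to $H_{m-1}^t$, self-adjointness of the projection gives
$$
\|f_{m-1}^{o,t}\|^2 \;=\; \langle f_{m-1}^{o,t},\, f\rangle.
$$
For $f\in A_1(\D^s)$, write $f$ as a (limit of) convex combinations $\sum_i c_i g_i$ with $g_i\in\D^s$ and $\sum_i|c_i|\le 1$. Then
$$
\|f_{m-1}^{o,t}\|^2 \;\le\; \sum_i |c_i|\,\bigl|\langle f_{m-1}^{o,t}, g_i\rangle\bigr| \;\le\; \sup_{g\in \D^s}\bigl|\langle f_{m-1}^{o,t}, g\rangle\bigr| \;=\; \sup_{g\in \D}\bigl|\langle f_{m-1}^{o,t}, g\rangle\bigr|,
$$
where the last identity uses that $\D^s$ is obtained from $\D$ by multiplication by unit scalars. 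Combining with the weak greedy rule (assuming, by standard convention, $\|g\|\le 1$ for $g\in\D$) yields
$$
\bigl|\langle f_{m-1}^{o,t},\varphi_m^{o,t}\rangle\bigr| \;\ge\; t\,\sup_{g\in\D}\bigl|\langle f_{m-1}^{o,t}, g\rangle\bigr| \;\ge\; t\,\|f_{m-1}^{o,t}\|^2.
$$

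Next I would produce the error recursion. Because $H_{m-1}^t\subset H_m^t$, one has $P_{H_m^t}(f)-P_{H_{m-1}^t}(f)=P_{H_m^t}(f_{m-1}^{o,t})$, and this vector is orthogonal to $f_m^{o,t}$; Pythagoras then gives
$$
\|f_{m-1}^{o,t}\|^2 \;=\; \|f_m^{o,t}\|^2 + \bigl\|P_{H_m^t}(f_{m-1}^{o,t})\bigr\|^2.
$$
Since $\varphi_m^{o,t}\in H_m^t$ has norm at most $1$, the projection onto $H_m^t$ is at least the projection onto $\sp(\varphi_m^{o,t})$, so $\|P_{H_m^t}(f_{m-1}^{o,t})\|^2\ge |\langle f_{m-1}^{o,t},\varphi_m^{o,t}\rangle|^2$. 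Substituting the greedy lower bound,
$$
a_m \;\le\; a_{m-1} - t^2 a_{m-1}^2 \;=\; a_{m-1}\,(1-t^2 a_{m-1}).
$$

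Finally I close the recursion by passing to reciprocals. Because $a_0=\|f\|^2\le 1$ (as $A_1(\D^s)$ sits in the unit ball of $H$), an induction gives $a_m\le 1$ for all $m$, and from the recursion
$$
\frac{1}{a_m} \;\ge\; \frac{1}{a_{m-1}(1-t^2 a_{m-1})} \;\ge\; \frac{1+t^2 a_{m-1}}{a_{m-1}} \;=\; \frac{1}{a_{m-1}}+t^2.
$$
Iterating yields $1/a_m\ge 1/a_0+mt^2\ge 1+mt^2$, i.e.\ $\|f-G_m^{o,t}(f,\D)\|^2\le(1+mt^2)^{-1}$, which is \eqref{2.3}.

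The only subtle point is the Pythagorean/projection step: one must keep clear that $f_m^{o,t}$ is orthogonal to the entire subspace $H_m^t$ (so that the optimal projection dominates any one-dimensional projection), and that the dictionary elements are normalized so the bound $\|P_{H_m^t}(f_{m-1}^{o,t})\|\ge|\langle f_{m-1}^{o,t},\varphi_m^{o,t}\rangle|$ is valid. Everything else is a routine convex duality argument plus a reciprocal-recursion trick.
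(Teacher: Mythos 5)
Your proof is correct and is, in substance, exactly the argument used in the cited reference [T1] (and reproduced in the Tbook, Ch.~2): derive the one-step dual bound $\|f_{m-1}^{o,t}\|^2 \le \sup_{g\in\D}|\langle f_{m-1}^{o,t},g\rangle|$ from $f\in A_1(\D^s)$ together with the orthogonality of $f_{m-1}^{o,t}$ to $H_{m-1}^t$, combine it with the weak greedy selection rule and Pythagoras for nested projections to get $a_m\le a_{m-1}(1-t^2a_{m-1})$, and then close via the reciprocal recursion. No gaps; the two cautionary remarks you flag (that the residual is orthogonal to all of $H_m^t$, not merely to $\varphi_m^{o,t}$, and that $\|\varphi_m^{o,t}\|\le 1$ is needed for the one-dimensional projection bound) are exactly the right things to check and you handle them correctly.
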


We now prove the following assertion.

\begin{Theorem}\label{T2.3} For any finite $Q\subset \Z^d$ we have
$$
\sigma_m(\Tr(Q)_1,\D^1(Q))_2 \le (|Q|/m)^{-1/2}.
$$
\end{Theorem}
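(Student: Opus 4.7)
The plan is to exhibit each $f\in\Tr(Q)$ as a continuous ``convex combination'' of translates of $w_Q$ and then invoke the OGA convergence estimate of Theorem \ref{T2.2}.

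The starting point is the reproducing identity for $\Tr(Q)$: for any $f\in\Tr(Q)$,
\[
f(\bx) \;=\; \int_{\T^d} f(\by)\,\D_Q(\bx-\by)\,d\mu(\by), \qquad d\mu(\by) := \frac{d\by}{(2\pi)^d}.
\]
I would verify this by expanding $\D_Q(\bx-\by) = \sum_{\bk\in Q} e^{i(\bk,\bx-\by)}$, exchanging the finite sum with the integral, and using orthonormality of the exponentials together with the fact that $\hat f(\bk)=0$ for $\bk\notin Q$. Substituting $\D_Q = |Q|^{1/2}\, w_Q$ and writing $f(\by)=|f(\by)|\,e^{i\arg f(\by)}$ rewrites the identity as
\[
|Q|^{-1/2} f(\bx) \;=\; \int_{\T^d} |f(\by)| \, \bigl[e^{i\arg f(\by)}\,w_Q(\bx-\by)\bigr]\, d\mu(\by).
\]

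For $f\in\Tr(Q)_1$ the measure $|f(\by)|\,d\mu(\by)$ is nonnegative with total mass $\|f\|_1\le 1$, and for each $\by$ the bracketed factor lies in the symmetrized dictionary $\D^{1,s}(Q)$. Thus $|Q|^{-1/2} f$ is represented as an integral of elements of $\D^{1,s}(Q)$ against a sub-probability measure. Since $\by\mapsto w_Q(\cdot-\by)$ is continuous into $L_2(\T^d)$, this integral is a norm-limit of Riemann sums that are genuine convex combinations of elements of $\D^{1,s}(Q)$. Hence $|Q|^{-1/2} f \in A_1(\D^{1,s}(Q))$.

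Now I would apply Theorem \ref{T2.2} with $t=1$ (the Orthogonal Greedy Algorithm) to the function $|Q|^{-1/2} f$ in the Hilbert space $L_2(\T^d)$:
\[
\bigl\| |Q|^{-1/2} f - G^{o,1}_m(|Q|^{-1/2} f,\,\D^1(Q)) \bigr\|_2 \;\le\; (1+m)^{-1/2}.
\]
Rescaling by $|Q|^{1/2}$ yields an $m$-term approximant of $f$ from the linear span of $\D^1(Q)$ with $L_2$-error at most $|Q|^{1/2}(1+m)^{-1/2}\le |Q|^{1/2}m^{-1/2}$, which is the asserted bound.

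There is no real obstacle here: the whole argument reduces to recognizing the reproducing kernel of $\Tr(Q)$ as (a multiple of) an integral of dictionary elements. The only point requiring care is the passage from the integral representation to membership in the closed convex hull $A_1(\D^{1,s}(Q))$, which is handled by a standard continuity/Riemann-sum argument; after that, Theorem \ref{T2.2} does all the remaining work.
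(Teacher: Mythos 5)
Your proof follows the paper's argument exactly: the reproducing identity $f(\bx)=(2\pi)^{-d}\int_{\T^d} f(\by)\,\D_Q(\bx-\by)\,d\by$ shows that $|Q|^{-1/2}f\in A_1((\D^1)^s)$ whenever $\|f\|_1\le 1$, and Theorem~\ref{T2.2} then delivers the bound after rescaling, with your Riemann-sum step simply making the membership in $A_1$ more explicit than the paper does. Note also that the exponent $(|Q|/m)^{-1/2}$ in the statement as printed is a typo for $(|Q|/m)^{1/2}$; your derivation (like the paper's) correctly yields $|Q|^{1/2}m^{-1/2}$, which is the bound actually used downstream in Lemma~\ref{L2.1}.
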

\begin{proof} Each $f\in \Tr(Q)_1$ has a representation
\be\label{2.4}
f(\bx) = (2\pi)^{-d} \int_{\T^d} f(\by)\D_Q(\bx-\by)d\by = |Q|^{1/2}(2\pi)^{-d} \int_{\T^d} f(\by)w_Q(\bx-\by)d\by.
\ee
It follows from $\|f\|_1 \le 1$ and (\ref{2.4}) that $f|Q|^{-1/2} \in A_1((\D^1)^s)$. 
Therefore, by Theorem \ref{T2.2} we get the required bound.
\end{proof} 

Dictionary $\D^1(Q)$ is an infinite dictionary. In our further applications we would like to have a finite dictionary. Here we consider $Q\subset \Pi(\bN)$ with $\bN=(2^n,\dots,2^n)$, where $\Pi(\bN):=[-N_1,N_2]\times\cdots\times[-N_d,N_d]$, $\bN=(N_1,\dots,N_d)$. We denote
\begin{align*}
P(\mathbf N) := \bigl\{\mathbf n = (n_1 ,\dots,n_d),&\qquad n_j\ -\
\text{ are nonnegative integers},\\
&0\le n_j\le 2N_j  ,\qquad j = 1,\dots,d \bigr\},
\end{align*}
and set
$$
\bx^{\mathbf n}:=\left(\frac{2\pi n_1}{2N_1+1},\dots,\frac{2\pi n_d}
{2N_d+1}\right),\qquad \mathbf n\in P(\mathbf N) .
$$
Then for any $t\in \Tr(\Pi(\mathbf N))$ (see \cite{Z}, Ch.10)
\be\label{2.5}
\vartheta(\mathbf N)^{-1}\sum_{\mathbf n\in P(\mathbf N)}
\bigl|t(\bx^{\mathbf n})\bigr|\le C(d)\|t\|_1,   
\ee
where $\vartheta(\mathbf N) := \prod_{j=1}^d (2N_j  + 1)=\dim\Tr(\Pi(\bN))$.  
Specify $\bN:=(2^n,\dots,2^n)$ and define 
$$
\D^2:=\D^2(Q):= \{w_Q(\bx-\bx^\bn)\}_{\bn\in P(\bN)}.
$$
Then, clearly, $|\D^2(Q)|=\vartheta(\bN) = (2^{n+1}+1)^d$. Also, it is well known
that for $f\in \Tr(\Pi(\bN))$ one has 
\be\label{2.6}
f(\bx) = \vartheta(\mathbf N)^{-1}\sum_{\mathbf n\in P(\mathbf N)}
f(\bx^{\mathbf n})\D_{\Pi(\bN)}(\bx-\bx^\bn)
\ee
and, therefore, for $f\in \Tr(Q)$, $Q\subset \Pi(\bN)$
\be\label{2.6Q}
f(\bx) = \vartheta(\mathbf N)^{-1}\sum_{\mathbf n\in P(\mathbf N)}
f(\bx^{\mathbf n})\D_Q(\bx-\bx^\bn).
\ee
In particular, (\ref{2.5}) and (\ref{2.6Q}) imply that there exists $C(d)>0$ such that for every $f\in\Tr(Q)_1$ we have $C(d)^{-1}|Q|^{-1/2} f\in A_1((\D^2)^s)$. 
Therefore, we have the following version of Theorem \ref{T2.3}.

\begin{Theorem}\label{T2.4} For any $Q\subset \Pi(\bN)$ with $\bN=(2^n,\dots,2^n)$ we have
$$
\sigma_m(\Tr(Q)_1,\D^2(Q))_2 \le C(d)(|Q|/m)^{-1/2}
$$
and $|\D^2(Q)|\le C'(d)2^{nd}$.
\end{Theorem}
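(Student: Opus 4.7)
The plan is to mimic the proof of Theorem 2.3, replacing the continuous reproducing integral (2.4) with the discrete quadrature identity (2.6Q). The paper essentially gives this recipe in the sentence immediately preceding the statement, so Theorem 2.4 comes out as a short corollary once Zygmund's inequality (2.5) has been invoked.

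Concretely, for $f\in\Tr(Q)_1$ with $Q\subset\Pi(\bN)$, I would start from (2.6Q) and use $\D_Q=|Q|^{1/2}w_Q$ to rewrite it as
$$
|Q|^{-1/2}f(\bx)=\vartheta(\bN)^{-1}\sum_{\bn\in P(\bN)} f(\bx^\bn)\,w_Q(\bx-\bx^\bn),
$$
displaying $|Q|^{-1/2}f$ as a finite linear combination of elements of $\D^2(Q)$. Applying Zygmund's discretization inequality (2.5) to $f\in\Tr(\Pi(\bN))$ gives
$$
\vartheta(\bN)^{-1}\sum_{\bn\in P(\bN)}|f(\bx^\bn)|\le C(d)\|f\|_1\le C(d),
$$
and absorbing the phases $f(\bx^\bn)/|f(\bx^\bn)|$ into the symmetrized dictionary $(\D^2)^s$ identifies $C(d)^{-1}|Q|^{-1/2}f$ as an element of $A_1((\D^2)^s)$ --- exactly the italicized claim in the paragraph preceding the theorem.

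From there, Theorem 2.2 with $t=1$, applied to $g:=C(d)^{-1}|Q|^{-1/2}f$, furnishes an $m$-term approximant from $\D^2$ satisfying $\|g-G^{o,1}_m(g,\D^2)\|_2\le(1+m)^{-1/2}$. Rescaling by $C(d)|Q|^{1/2}$ produces an $m$-term approximant to $f$ with $L_2$-error at most $C(d)|Q|^{1/2}m^{-1/2}$, which is the asserted bound on $\sigma_m(\Tr(Q)_1,\D^2(Q))_2$. The cardinality bound follows by direct counting: $|\D^2(Q)|=|P(\bN)|=\prod_{j=1}^d(2\cdot 2^n+1)\le C'(d)2^{nd}$.

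No single step is hard; Theorem 2.4 is essentially Theorem 2.3 transported from the infinite dictionary $\D^1$ to the finite dictionary $\D^2$. The only substantive extra ingredient is Zygmund's inequality (2.5), which lets the average $(2\pi)^{-d}\int_{\T^d}$ in (2.4) be faithfully replaced by the cubic-grid Riemann sum $\vartheta(\bN)^{-1}\sum_{\bn}$ at the cost of a dimensional constant $C(d)$, and this is the only source of the extra $C(d)$ in the final bound relative to Theorem 2.3.
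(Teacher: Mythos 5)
Your argument reproduces the paper's own (implicit) proof: the paper proves Theorem \ref{T2.4} precisely by combining (\ref{2.5}) and (\ref{2.6Q}) to show $C(d)^{-1}|Q|^{-1/2}f\in A_1((\D^2)^s)$ and then invoking Theorem \ref{T2.2}, which is exactly what you do; the cardinality count is identical. (Note only that the exponent in the stated bound, $(|Q|/m)^{-1/2}$, is a sign typo carried over from Theorem \ref{T2.3} --- the intended and derived bound is $C(d)(|Q|/m)^{1/2}$, as your computation correctly produces.)
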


Theorems \ref{T2.3} and \ref{T2.4} provide bounds for the best $m$-term approximation of elements of $\Tr(Q)_1$ in the $L_2$ norm. For applications in 
the Marcinkiewicz discretization theorem we need bounds for the entropy numbers in the $L_\infty$ norm. As we explained above we derive appropriate bounds for the entropy numbers from the corresponding bounds on the best $m$-term approximations with the help of Theorem \ref{T2.1}. Thus we need bounds on the best $m$-term approximations in the $L_\infty$ norm. We proceed in the same way as in \cite{VT156} and use the following dictionary
$$
\D^\Tr:=\D^\Tr(Q):= \{e^{i(\bk,\bx)}: \bk\in   Q\}.
$$
In order to obtain the bounds in the $L_\infty$ norm we use the following theorem from \cite{VT156}, which in turn is a corollary of the corresponding result from \cite{VT150}.

\begin{Theorem}\label{T2.5} Let $\Lambda\subset \Pi(\bN)$ with $N_j=2^n$, $j=1,\dots,d$. There exist constructive greedy-type approximation methods $G^\infty_m(\cdot)$, which provide $m$-term polynomials with respect to $\Tr^d$ with the following properties: 
\newline 
for $f\in \Tr(\Lambda)$ we have $G^\infty_m(f)\in \Tr(\Lambda)$ and 
$$
\|f-G^\infty_m(f)\|_\infty \le C_3(d)(\mb)^{-1/2}n^{1/2}|\Lambda|^{1/2}\|f\|_2,\quad \bar m := \max(m,1).
$$
\end{Theorem}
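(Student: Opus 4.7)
The plan is to take as the dictionary the natural trigonometric system restricted to $\Lambda$,
$$
\D_\Lambda := \{e^{i(\bk,\bx)} : \bk\in\Lambda\},
$$
and to construct $G^\infty_m(f)$ by running a Weak Chebyshev-type Greedy Algorithm against $\D_\Lambda$. Because every dictionary element lies in $\Tr(\Lambda)$, each iterate automatically satisfies $G^\infty_m(f)\in\Tr(\Lambda)$, settling the range condition. The remaining task is to bound the $L_\infty$ error by $C_3(d)(\mb)^{-1/2}n^{1/2}|\Lambda|^{1/2}\|f\|_2$.

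First I would place $f$ in an $A_1$-hull. For $f=\sum_{\bk\in\Lambda}\hat f(\bk)e^{i(\bk,\bx)}\in\Tr(\Lambda)$, Cauchy--Schwarz gives $\sum_{\bk\in\Lambda}|\hat f(\bk)|\le |\Lambda|^{1/2}\|f\|_2$, so after rescaling, $f/(|\Lambda|^{1/2}\|f\|_2)\in A_1(\D_\Lambda^s)$. Thus the problem reduces to approximating, in $L_\infty$, a typical element of $A_1(\D^s)$ by $m$-term combinations from $\D$ with $|\D|\le|\Pi(\bN)|\le C(d)2^{nd}$.

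Next I would run the greedy algorithm in $L_p$ for a free parameter $p\in[2,\infty)$ and only at the end pass to $L_\infty$ via a Nikol'skii-type inequality. The relevant Banach-space greedy estimate (the content of \cite{VT150} on which \cite{VT156} is built) exploits the type-$2$ constant of $L_p$, which is of order $\sqrt{p}$, to yield
$$
\|g-G_m(g)\|_p\le C\sqrt{p}\,(\mb)^{-1/2},\qquad g\in A_1(\D^s).
$$
Applied to $f/(|\Lambda|^{1/2}\|f\|_2)$ and combined with the standard trigonometric Nikol'skii bound $\|h\|_\infty\le C(d)|\Lambda|^{1/p}\|h\|_p$ valid for $h\in\Tr(\Lambda)$, this gives
$$
\|f-G^\infty_m(f)\|_\infty\le C(d)\sqrt{p}\,|\Lambda|^{1/p}(\mb)^{-1/2}|\Lambda|^{1/2}\|f\|_2.
$$
I would then optimize by choosing $p=\log|\Lambda|$, so that $|\Lambda|^{1/p}=e$ and $\sqrt{p}\le C(d)\sqrt{n}$ (using $|\Lambda|\le C(d)2^{nd}$), which produces exactly the announced bound.

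The main obstacle is the genuinely nontrivial greedy rate $\|g-G_m(g)\|_p\le C\sqrt{p}\,m^{-1/2}$ for $g\in A_1(\D^s)$ in $L_p$, which is the Banach-space analog of Theorem \ref{T2.2}: it requires the type-$2$ geometry of $L_p$ together with a weak-greedy Lebesgue-type inequality, and this is precisely where one must invoke the machinery of \cite{VT150}. Once that ingredient is granted, the Nikol'skii promotion and the optimization in $p$ are essentially mechanical. The constructiveness of $G^\infty_m$ is automatic: at each step the weak Chebyshev greedy algorithm explicitly picks a frequency $\bk\in\Lambda$ at which the duality bracket with the current residual is nearly maximized, and then Chebyshev-projects in $L_p$ onto the span of the chosen frequencies.
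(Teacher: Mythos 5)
Your outline has the right skeleton and, as far as I can tell, matches the route taken in the cited sources: the paper itself does not prove Theorem \ref{T2.5} but quotes it from \cite{VT156} (which in turn rests on \cite{VT150}), and that proof does indeed proceed by running a constructive greedy algorithm in $L_p$ and then promoting to $L_\infty$ via Nikol'skii. The $A_1$-hull step (Cauchy--Schwarz giving $\sum_{\bk\in\Lambda}|\hat f(\bk)|\le|\Lambda|^{1/2}\|f\|_2$) and the Banach-space greedy rate $\sim\sqrt{p}\,(\mb)^{-1/2}$ in $L_p$, coming from $\rho(u)\le(p-1)u^2/2$, are exactly right, and the constructiveness and the range condition $G^\infty_m(f)\in\Tr(\Lambda)$ are settled as you say.

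The one genuine gap is the Nikol'skii inequality you invoke, $\|h\|_\infty\le C(d)|\Lambda|^{1/p}\|h\|_p$ for $h\in\Tr(\Lambda)$. This is not a standard inequality and it is false for general sparse $\Lambda$. Take $d=1$, $\Lambda=\{2^j:0\le j<n\}$ (so $|\Lambda|=n$), and $h=\sum_{0\le j<n}e^{i2^jx}$. By Khintchine, $\|h\|_p\asymp\sqrt{pn}$ for $p\ge2$, while $\|h\|_\infty=n$, so $\|h\|_\infty/\|h\|_p\asymp\sqrt{n/p}$, which is not $O(n^{1/p})$ for any fixed $p$ as $n\to\infty$, nor for $p\asymp\log n$. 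What you should use instead is the classical Nikol'skii inequality for the containing box: since $\Tr(\Lambda)\subset\Tr(\Pi(\bN))$ with $N_j=2^n$ and $\dim\Tr(\Pi(\bN))\asymp2^{nd}$, one has $\|h\|_\infty\le C(d)\,2^{nd/p}\|h\|_p$. Choosing $p\asymp dn$ makes $2^{nd/p}=O(1)$ and $\sqrt{p}\asymp\sqrt{n}$, reproducing the stated bound. Note that the theorem's factor is $n^{1/2}$ rather than $(\log|\Lambda|)^{1/2}$; if your sparse Nikol'skii inequality were true, you would obtain the stronger $(\log|\Lambda|)^{1/2}$, so the form of the statement is itself a hint that the Nikol'skii step must run through the ambient box $\Pi(\bN)$ and not through $|\Lambda|$. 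With that substitution and the corresponding choice $p\asymp n$ in place of $p\asymp\log|\Lambda|$, the argument is sound.
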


We now consider a dictionary 
$$
\D^3:=\D^3(Q):= \D^2(Q)\cup \D^\Tr(Q).
$$

\begin{Lemma}\label{L2.1} For any $Q\subset \Pi(\bN)$ with $\bN=(2^n,\dots,2^n)$ we have
$$
\sigma_m(\Tr(Q)_1,\D^3(Q))_\infty \le C(d)n^{1/2}|Q|/m
$$
and $|\D^3(Q)|\le C'(d)2^{nd}$.
\end{Lemma}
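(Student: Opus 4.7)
The plan is a two-stage approximation exploiting the two sub-dictionaries making up $\D^3(Q)$: first use $\D^2(Q)$ to approximate $f\in\Tr(Q)_1$ in $L_2$, then use $\D^\Tr(Q)$ together with Theorem \ref{T2.5} to lift that $L_2$ bound to an $L_\infty$ bound on the residual.

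More concretely, given $f\in\Tr(Q)_1$ and a target $m$, I would split $m = m_1+m_2$ with $m_1,m_2$ of order $m/2$. By Theorem \ref{T2.4} there exists an $m_1$-term approximant $g_1$ to $f$ from $\D^2(Q)$ with
\[
\|f-g_1\|_2 \le C(d)(|Q|/m_1)^{-1/2}.
\]
A key observation is that every element of $\D^2(Q)$ is a shifted Dirichlet kernel $w_Q(\bx-\bx^\bn)$, so $g_1\in\Tr(Q)$ and hence $h:=f-g_1\in\Tr(Q)\subset\Tr(\Pi(\bN))$. Then Theorem \ref{T2.5} (applied with $\Lambda=Q$) supplies an $m_2$-term greedy approximant $G^\infty_{m_2}(h)$ to $h$ from $\D^\Tr(Q)$ with
\[
\|h-G^\infty_{m_2}(h)\|_\infty \le C_3(d)\,m_2^{-1/2}n^{1/2}|Q|^{1/2}\|h\|_2.
\]

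Combining these two bounds, the $m$-term approximant $g_1+G^\infty_{m_2}(h)\in\sp(\D^3(Q))$ satisfies
\[
\|f-g_1-G^\infty_{m_2}(h)\|_\infty \le C'(d)\,n^{1/2}|Q|^{1/2}\cdot m_2^{-1/2}\cdot(|Q|/m_1)^{-1/2} = C'(d)\,\frac{n^{1/2}|Q|}{\sqrt{m_1 m_2}}.
\]
Balancing $m_1\asymp m_2\asymp m/2$ gives exactly the claimed bound $C(d)n^{1/2}|Q|/m$. For the cardinality claim, simply note $|\D^3(Q)|\le |\D^2(Q)|+|\D^\Tr(Q)|\le C'(d)2^{nd}+|Q|$, and since $Q\subset\Pi(\bN)$ we have $|Q|\le\vartheta(\bN)\le C(d)2^{nd}$, giving $|\D^3(Q)|\le C''(d)2^{nd}$.

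I do not foresee a serious obstacle: the only subtle point is verifying that the $L_2$ residual $h=f-g_1$ actually lies in $\Tr(Q)$ so that Theorem \ref{T2.5} is applicable with $\Lambda=Q$ (and thus $|\Lambda|=|Q|$, not $|\Pi(\bN)|$); this is immediate from the shift-invariance of $\Tr(Q)$ under translations of $w_Q$. The rest is an arithmetic balancing of the two stages. One might also want to check that if $m_2=0$ the bound is still trivially valid (it is, since then the $L_\infty$ norm is controlled by Nikol'skii on $\Tr(\Pi(\bN))$), but the interesting regime is $m$ large enough that both stages are non-trivial.
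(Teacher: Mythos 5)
Your proof is correct and follows exactly the paper's argument: the paper, too, applies Theorem \ref{T2.4} with $[m/2]$ terms to get an $L_2$-small residual, and then applies Theorem \ref{T2.5} with $\Lambda=Q$ and another $[m/2]$ terms to control the $L_\infty$ norm of that residual. The one detail you spell out that the paper leaves implicit --- that the residual $h=f-g_1$ lies in $\Tr(Q)$ because each $w_Q(\cdot-\bx^\bn)\in\Tr(Q)$, so that Theorem \ref{T2.5} is applicable with $|\Lambda|=|Q|$ rather than $|\Pi(\bN)|$ --- is a worthwhile observation, but it does not change the route.
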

\begin{proof} Take $f\in\Tr(Q)_1$. Applying first Theorem \ref{T2.4} with $[m/2]$ and, then, applying 
Theorem \ref{T2.5} with $\Lambda = Q$ and $[m/2]$ we obtain
$$
\sigma_m(f,\D^3(Q))_\infty \ll n^{1/2}(| Q_n|/m)\|f\|_1,
$$
which proves the lemma.
\end{proof}

Lemma \ref{L2.1}, Theorem \ref{T2.1}, and Remark \ref{R2.1} imply the following 
result on the entropy numbers.

\begin{Theorem}\label{T2.6} For any $Q\subset \Pi(\bN)$ with $\bN=(2^n,\dots,2^n)$ we have
$$
\e_k(\Tr( Q)_1,L_\infty) \ll  \left\{\begin{array}{ll} n^{3/2}(|Q|/k), &\quad k\le 2| Q|,\\
n^{3/2}2^{-k/(2| Q|)},&\quad k\ge 2| Q|.\end{array} \right.
$$
\end{Theorem}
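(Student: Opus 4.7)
The plan is to feed Lemma \ref{L2.1} into the abstract approximation--entropy machinery of Theorem \ref{T2.1} and Remark \ref{R2.1}. First I would rescale $\Tr(Q)_1$ so that the best $m$-term bound takes the normalized form $\sigma_m \le m^{-1}$: set
$$
\tilde F := \bigl(C(d)\, n^{1/2}|Q|\bigr)^{-1}\Tr(Q)_1,
$$
with $C(d)$ the constant from Lemma \ref{L2.1}, so that $\sigma_m(\tilde F, \D^3(Q))_\infty \le m^{-1}$. Writing $N' := |\D^3(Q)| \le C'(d)2^{nd}$, we are then in the hypothesis of Theorem \ref{T2.1} with $r=1$.

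For the polynomial branch $k \le 2|Q|$, Theorem \ref{T2.1} gives
$$
\e_k(\tilde F, L_\infty) \le C\,\frac{\log(2N'/k)}{k}.
$$
Since $|Q| \le |\Pi(\bN)| \le N'$, this range of $k$ is admissible; the crude estimate $\log(2N'/k) \le \log(2N') \le C(d)\,n$ then yields $\e_k(\tilde F, L_\infty) \le C(d)\,n/k$, and undoing the normalization produces $\e_k(\Tr(Q)_1, L_\infty) \ll n^{3/2}|Q|/k$, which is the first branch.

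For $k \ge 2|Q|$ I would invoke Remark \ref{R2.1}, but applied with the dimension of the subspace that actually contains $\tilde F$, rather than $|\D^3(Q)|$. The crucial observation is that $\D^2(Q)$ consists of translates of $w_Q$ and $\D^\Tr(Q)$ consists of exponentials $e^{i(\bk,\bx)}$ with $\bk \in Q$, so $\sp(\D^3(Q)) \subseteq \Tr(Q)$, a complex subspace of dimension $|Q|$. The proof of Remark \ref{R2.1} (covering a ball in an $M$-dimensional complex subspace at scale $\e_M$ by $\sim 2^{2M}$ balls and iterating) depends only on this ambient dimension; in the complex setting it therefore delivers a factor $2^{-k/(2|Q|)}$ once $k \ge 2|Q|$. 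Combined with $\e_{2|Q|}(\Tr(Q)_1, L_\infty) \ll n^{3/2}$ from the previous step, this gives $\e_k(\Tr(Q)_1, L_\infty) \ll n^{3/2}\,2^{-k/(2|Q|)}$, the second branch.

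The main obstacle is precisely this last substitution $N \to |Q|$: Remark \ref{R2.1} is phrased for a linearly independent dictionary of cardinality $N$ spanning an $N$-dimensional subspace, whereas here $\D^3(Q)$ is massively overcomplete (cardinality $\sim 2^{nd}$ inside the $|Q|$-dimensional space $\Tr(Q)$). Once one reads the remark as depending on the dimension of $\sp(\D)$ rather than on $|\D|$, the rest of the argument is a direct substitution of Lemma \ref{L2.1} into the abstract framework already established in the paper.
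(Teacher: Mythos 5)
Your proposal is correct and reproduces the paper's (one-line) proof, which simply cites Lemma \ref{L2.1}, Theorem \ref{T2.1}, and Remark \ref{R2.1}. You also correctly flag and resolve the one genuinely subtle point that the paper leaves implicit: the exponential decay rate $2^{-k/(2|Q|)}$ in the second branch must come from $\dim\sp(\D^3(Q))=|Q|$ rather than from $|\D^3(Q)|\asymp 2^{nd}$, so Remark \ref{R2.1} has to be read with the subspace dimension governing the exponent, while Theorem \ref{T2.1} at $k\asymp|Q|$ supplies the $n^{3/2}$ prefactor.
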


The above theorem with $Q=Q_n$ can be used for proving the upper bounds for the entropy numbers of the mixed smoothness classes. We define the classes which were studied in \cite{VT152} and \cite{VT156}.

 Let $\bs=(s_1,\dots,s_d)$ be a vector with nonnegative integer coordinates ($\bs \in \Z^d_+$) and as above
$$
\rho(\bs):= \{\bk=(k_1,\dots,k_d)\in \Z^d_+ : [2^{s_j-1}]\le |k_j|<2^{s_j},\quad j=1,\dots,d\}
$$
where $[a]$ denotes the integer part of a number $a$.  Define for $f\in L_1$
$$
\delta_\bs(f) := \sum_{\bk\in\rho(\bs)} \hat f(\bk) e^{i(\bk,\bx)},
$$
and
$$
f_l:=\sum_{\|\bs\|_1=l}\delta_\bs(f), \quad l\in \N_0,\quad \N_0:=\N\cup \{0\}.
$$
  Consider the class (see \cite{VT152})
$$
\bW^{a,b}_q:=\{f: \|f_l\|_q \le 2^{-al}(\bar l)^{(d-1)b}\},\quad \bar l:=\max(l,1).
$$
Define
$$
\|f\|_{\bW^{a,b}_q} := \sup_l \|f_l\|_q 2^{al}(\bar l)^{-(d-1)b}.
$$
Here is one more class, which is equivalent to $\bW^{a,b}_q$ in the case $1<q<\infty$ (see \cite{VT152}). 
Consider a class ${\bar \bW}^{a,b}_q$, which consists of functions $f$ with a representation  
$$
f=\sum_{n=1}^\infty t_n, \quad t_n\in \Tr(Q_n), \quad \|t_n\|_q \le 2^{-an} n^{b(d-1)}.
$$
  In the case $q=1$ classes ${\bar \bW}^{a,b}_1$ are wider than $\bW^{a,b}_1$. 
   
The following theorem was proved in \cite{VT156}.

\begin{Theorem}\label{T2.7} Let $d=2$ and $a>1$. Then
\be\label{2.7}
\e_k(\bW^{a,b}_1,L_\infty) \asymp \e_k(\bar\bW^{a,b}_1,L_\infty) \asymp k^{-a} (\log k)^{a+b+1/2}.
\ee
\end{Theorem}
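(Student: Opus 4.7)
\medskip
\noindent\textbf{Proof plan for Theorem~\ref{T2.7}.} The statement packages three assertions: an upper bound for the larger class $\bar\bW^{a,b}_1$, a lower bound that already applies to the smaller class $\bW^{a,b}_1$, and the comparison $\bW^{a,b}_1\subset C\bar\bW^{a,b}_1$. The comparison is immediate when $d=2$: for $f\in\bW^{a,b}_1$ the building blocks $t_l:=f_l=\sum_{\|\bs\|_1=l}\delta_\bs(f)$ lie in $\Tr(Q_l)$ with $\|t_l\|_1\le 2^{-al}l^b$, so $f=\sum_l t_l$ realises $f$ as a member of $\bar\bW^{a,b}_1$, and hence $\e_k(\bW^{a,b}_1,L_\infty)\le C\,\e_k(\bar\bW^{a,b}_1,L_\infty)$.

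\medskip
\noindent\textbf{Upper bound for $\bar\bW^{a,b}_1$.} Given $k$, the plan is to fix a truncation level $M$ of order $\log k$, write $f=\sum_n t_n\in\bar\bW^{a,b}_1$, and approximate the head $\sum_{n\le M}t_n$ level-by-level while discarding the tail. The tail contribution is controlled by the trivial Nikol'skii-type estimate $\|t_n\|_\infty\ll |Q_n|\|t_n\|_1\ll 2^{(1-a)n}n^{b+1}$, which, since $a>1$, is summable to $\ll 2^{-(a-1)M}M^{b+1}$, and this can be absorbed into $k^{-a}(\log k)^{a+b+1/2}$ by choosing the constant in $M$ large enough. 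For each $n\le M$, Theorem~\ref{T2.6} applied to the rescaled ball $2^{-an}n^b\Tr(Q_n)_1$ yields, using $2^{k_n}$ centres, an $L_\infty$-approximant within $\ll 2^{-an}n^{b+3/2}2^{-k_n/(2|Q_n|)}$. Allocating the budget $\sum_{n\le M}k_n=k$ by a Lagrange-type optimisation gives $k_n\asymp |Q_n|(M-n)$ up to logarithmic terms; combined with $|Q_n|\asymp 2^n n$ (valid for $d=2$) this should produce the claimed bound $\e_k(\bar\bW^{a,b}_1,L_\infty)\ll k^{-a}(\log k)^{a+b+1/2}$.

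\medskip
\noindent\textbf{Lower bound for $\bW^{a,b}_1$.} Fix $n$ and observe that any $t\in\Tr(\cup_{\|\bs\|_1=n}\rho(\bs))$ with $\|t\|_1\le 1$ gives rise to $f:=2^{-an}n^b t\in\bW^{a,b}_1$, whence
\[
\e_k(\bW^{a,b}_1,L_\infty)\ge 2^{-an}n^b\,\e_k\bigl(\Tr(\cup_{\|\bs\|_1=n}\rho(\bs))_1,L_\infty\bigr).
\]
The right-hand set lives in a subspace of dimension $\asymp|Q_n|\asymp 2^n n$; a standard volume-ratio argument inside this subspace, combined with sharp Nikol'skii-type estimates for the hyperbolic cross, should deliver $\e_k(\Tr(Q_n)_1,L_\infty)\gtrsim n^{1/2}$ in the regime $k\le c|Q_n|$. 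Optimising the free parameter $n$ so that $|Q_n|\asymp k$ (that is, $n\asymp\log k$) produces the matching lower bound $\e_k(\bW^{a,b}_1,L_\infty)\gtrsim k^{-a}(\log k)^{a+b+1/2}$.

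\medskip
\noindent\textbf{Main obstacle.} The delicate point will be pinning down the exponent $1/2$ on the logarithm. A uniform allocation $k_n\equiv k/M$, or a casual handling of the $n^{3/2}$ factor in Theorem~\ref{T2.6}, each cost an extra $\log k$; recovering exactly $1/2$ requires the Lagrange-optimal (non-uniform) allocation on the upper side and the sharp Nikol'skii radius $n^{1/2}$, rather than the trivial $|Q_n|$, on the lower side. Coordinating every logarithmic correction through both the chaining argument and the volume comparison is the principal technical step.
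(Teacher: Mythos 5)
Your upper-bound strategy has a genuine gap, and the paper itself points it out. The paper does not prove Theorem~\ref{T2.7} at all: it cites it from \cite{VT156} and, in Section~6, explains exactly what input is required. The upper bound in Theorem~\ref{T2.7} rests on estimate~(\ref{6.1}),
$$
\e_k(\Tr( Q_n)_1,L_\infty)\ll  n^{1/2} \bigl(| Q_n|/k\bigr) \log (4| Q_n|/k), \qquad k\le 2| Q_n|,
$$
which is proved in \cite{VT156} via an analog of the Small Ball Inequality and which, as the paper stresses, uses two-dimensional-specific features with no known extension to $d>2$. Theorem~\ref{T2.6}, which is the only entropy estimate you invoke, gives the weaker~(\ref{6.2}) with the factor $n^{3/2}(|Q_n|/k)$ in place of $n^{1/2}(|Q_n|/k)\log(4|Q_n|/k)$, and the paper states explicitly that feeding~(\ref{6.2}) through the machinery yields only $\e_k(\bW^{a,b}_1,L_\infty)\ll k^{-a}(\log k)^{a+b+3/2}$ — the exponent proved in Theorem~\ref{T2.8} for $d=2$. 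No Lagrange-optimal budget allocation can recover the missing $\log k$: if you carry out the optimization you sketch, the constraint $\sum_{n\le M}k_n\asymp k$ forces $|Q_M|\asymp k$, the optimum concentrates the budget near $n\approx M$, and the resulting error is $\asymp 2^{-aM}M^{b+3/2}\asymp k^{-a}(\log k)^{a+b+3/2}$, exactly as the paper predicts. The crucial gain of~(\ref{6.1}) over Theorem~\ref{T2.6} is precisely in this dominant regime $k\approx|Q_n|$, where $\log(4|Q_n|/k)$ is bounded while $n$ is of order $\log k$; that is where the extra power of $\log k$ is saved, and it is a feature of the entropy estimate, not of the allocation. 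Your ``main obstacle'' paragraph half-recognizes this, but the remedy you propose (sharper allocation, sharper Nikol'skii constant on the lower side) does not replace the Small Ball ingredient. The lower bound is also treated rather casually — you assert $\e_k(\Tr(Q_n)_1,L_\infty)\gtrsim n^{1/2}$ for $k\le c|Q_n|$ by a ``standard volume-ratio argument,'' but establishing the sharp $n^{1/2}$ there is not routine; it is again tied to the Small Ball circle of ideas in \cite{VT156}.

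\end{document}
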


We prove here an extension of Theorem \ref{T2.7} to all $d$. We note that this extension -- Theorem \ref{T2.8} -- is weaker than Theorem \ref{T2.7} in case $d=2$. 

\begin{Theorem}\label{T2.8} Let  $a>1$. Then
\be\label{2.8}
\e_k(\bW^{a,b}_1,L_\infty) \le \e_k(\bar\bW^{a,b}_1,L_\infty) \ll k^{-a} (\log k)^{(a+b)(d-1)+3/2}.
\ee
\end{Theorem}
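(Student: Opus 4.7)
The first inequality $\e_k(\bW^{a,b}_1, L_\infty) \le \e_k(\bar\bW^{a,b}_1, L_\infty)$ is immediate from the inclusion $\bW^{a,b}_1 \subset \bar\bW^{a,b}_1$: the Littlewood--Paley blocks $f_l = \sum_{\|\bs\|_1=l} \delta_\bs(f)$ of any $f \in \bW^{a,b}_1$ lie in $\Tr(Q_l)$ and satisfy $\|f_l\|_1 \le 2^{-al} \bar l^{(d-1)b}$ by definition, so setting $t_n := f_n$ produces a representation certifying $f \in \bar\bW^{a,b}_1$ (modulo the usual treatment of the $n=0$ constant term).

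For the bound on $\e_k(\bar\bW^{a,b}_1, L_\infty)$, the plan is to decompose a generic $f$ as $f = \sum_{n \ge 1} t_n$, $t_n \in \Tr(Q_n)$, $\|t_n\|_1 \le 2^{-an} n^{b(d-1)}$, and build a $2^k$-net for $f$ in $L_\infty$ level by level. Theorem \ref{T2.6} provides an entropy $2^{k_n}$-net for $\Tr(Q_n)_1$, which rescaled by $\|t_n\|_1$ gives a net for $t_n$; the Cartesian product across levels is a $2^{\sum k_n}$-net whose error is at most $\sum_n \|t_n\|_1 \e_{k_n}(\Tr(Q_n)_1, L_\infty)$. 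Pick the critical scale $L$ by $|Q_L| \asymp 2^L L^{d-1} \asymp k$ and allocate bits in two regimes. For low levels $n \le L$, use the exponential-decay bound of Theorem \ref{T2.6} with $k_n := c(a)|Q_n|(L-n+1) \ge 2|Q_n|$; the level-$n$ error is $\ll n^{3/2} 2^{-c(L-n)} 2^{-an} n^{b(d-1)}$, summing to $\ll 2^{-aL} L^{b(d-1)+3/2}$ for $c > a$, and the total bit count is $\asymp |Q_L|$ since $|Q_n|$ grows geometrically. For high levels $n > L$, use the polynomial-decay bound with $k_n \le 2|Q_n|$, giving level-$n$ error $\ll a_n/k_n$ with $a_n := 2^{(1-a)n} n^{(b+1)(d-1)+3/2}$; the Cauchy--Schwarz-optimal allocation $k_n \propto \sqrt{a_n}$ shows that $\asymp |Q_L|$ bits suffice to drive the summed tail error down to $\ll 2^{-aL} L^{b(d-1)+3/2}$ as well (and the allocation truncates automatically since $k_n \to 0$ exponentially). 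Adding the two regimes gives total budget $\asymp |Q_L| \asymp k$ and total error $\ll 2^{-aL} L^{b(d-1)+3/2}$. Converting via $L \asymp \log k$ and $2^L \asymp k/(\log k)^{d-1}$ yields $\e_k \ll k^{-a}(\log k)^{a(d-1)+b(d-1)+3/2} = k^{-a}(\log k)^{(a+b)(d-1)+3/2}$.

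The main difficulty is keeping the polylogarithmic exponent exactly $(a+b)(d-1)+3/2$. A single hard cutoff at $M \asymp \log k$ with no bits spent above $M$ leaves a tail of order $2^{(1-a)M} M^{(b+1)(d-1)+3/2} \asymp k^{-(a-1)}(\log k)^{(a+b)(d-1)+3/2}$, a factor $k$ worse than the target, while naively over-allocating to high levels forces $L \asymp \log k - \Theta(\log\log k)$ and inflates the polylog exponent. The two-regime allocation above is the delicate balancing act: it exploits both parts of Theorem \ref{T2.6} simultaneously so that the high-level tail is brought down to the main-regime error at only a constant-multiple budget cost, preserving the sharp polylog exponent.
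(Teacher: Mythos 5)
Your proposal is correct. The paper's own proof is essentially two lines: it quotes the general Theorem~\ref{T2.1G}, imported from \cite{VT156} --- if $\dim X_n \asymp 2^n n^c$ and the unit balls $B(X_n)$ satisfy the entropy estimate (\ref{EA}), then $\e_k(\bar\bW^{a,b}_X\{X_n\},Y) \ll k^{-a}(\log k)^{ac+b+\alpha}$ --- and then observes that Theorem~\ref{T2.6} supplies (\ref{EA}) with $\alpha=3/2$, $\beta=1$, $\gamma=0$, so that taking $c=d-1$ and replacing $b$ by $b(d-1)$ (recall $\bar\bW^{a,b}_1=\bar\bW^{a,b(d-1)}_{L_1}\{\Tr(Q_n)\}$) gives the exponent $(a+b)(d-1)+3/2$. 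What you have written is, in effect, a direct proof of that black-boxed lemma in the case $X_n=\Tr(Q_n)$, $Y=L_\infty$: the product net across levels, the choice of critical scale $L$ with $|Q_L|\asymp k$, the geometric bit schedule $k_n\asymp|Q_n|(L-n+1)$ below $L$ exploiting the exponential branch of Theorem~\ref{T2.6}, and the Cauchy--Schwarz allocation $k_n\propto\sqrt{a_n}$ in the tail using the polynomial branch are precisely the mechanics the cited lemma packages. So the underlying route is the same; yours is simply self-contained where the paper delegates to \cite{VT156}, and your closing remark correctly identifies the delicate point --- a hard cutoff at $\asymp\log k$ loses a power of $k$ in the tail, and the two-regime allocation is what preserves the sharp polylog exponent. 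In a full write-up you would want to round the $k_n$ to nonnegative integers and, on levels where the optimizer gives $k_n<1$, set $k_n=0$ and bound that level's contribution by $\|t_n\|_\infty\le|Q_n|\,\|t_n\|_1\ll 2^{(1-a)n}n^{(b+1)(d-1)}$, which is summable because $a>1$; and if $b(d-1)+3/2<0$ the monotonicity used to sum the low-level regime reverses, though the geometric factor $2^{-a(L-n)}$ still dominates. Neither point affects the result.
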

\begin{proof} The proof is based on the following general result from \cite{VT156}. Let $X$ and $Y$ be two Banach spaces. We discuss a problem of estimating the entropy numbers of an approximation class, defined in the space $X$, in the norm of the space $Y$. Suppose a sequence of finite dimensional subspaces $X_n \subset X$, $n=1,\dots $, is given. Define the following class 
$$
{\bar \bW}^{a,b}_X:={\bar \bW}^{a,b}_X\{X_n\} := \{f\in X: f=\sum_{n=1}^\infty f_n,\quad  f_n\in X_n, 
$$
$$
 \|f_n\|_X \le 2^{-an}n^{b},\quad n=1,2,\dots\}.
$$
In particular,
$$
 {\bar \bW}^{a,b}_q = {\bar \bW}^{a,b(d-1)}_{L_q}\{\Tr(Q_n)\} .
$$ 
Denote $D_n:=\dim X_n$ and assume that for the unit balls $B(X_n):=\{f\in X_n: \|f\|_X\le 1\}$ we have the following upper bounds for the entropy numbers: there exist real $\al$ and nonnegative   $\ga$ and $\bt\in(0,1]$ such that 
\be\label{EA}
\e_k(B(X_n),Y) \ll n^\al \left\{\begin{array}{ll} (D_n/(k+1))^\bt (\log (4D_n/(k+1)))^\ga, &\quad k\le 2D_n,\\
2^{-k/(2D_n)},&\quad k\ge 2D_n.\end{array} \right.
\ee
\begin{Theorem}\label{T2.1G} Assume $D_n \asymp 2^n n^c$, $c\ge 0$, $a>\bt$, and subspaces $\{X_n\}$ satisfy (\ref{EA}). Then
\be\label{2.0G}
\e_k(\bar \bW^{a,b}_X\{X_n\},Y) \ll k^{-a} (\log k)^{ac+b+\al}.
\ee
\end{Theorem}

Theorem \ref{T2.6} with $Q=Q_n$ provides (\ref{EA}) with $\alpha=3/2$, $\beta=1$, $\gamma =0$. It remains to apply Theorem \ref{T2.1G} with $X_n=\Tr(Q_n)$ and $c=d-1$.
\end{proof}

\section{The Marcinkiewicz-type discretization theorem for the trigonometric polynomials}
\label{M}

 In this section we improve Theorem \ref{T3.2} from the Introduction, which was proved in \cite{VT160}, in two directions. We prove the Marcinkiewicz-type discretization theorem for $\Tr(Q)$ instead of $\Tr(Q_n)$ for a rather general $Q$. Also, even in a more general situation, we improve the bound from $m \le C_1(d)|Q_n|n^{d/2+3}$ to $m \le C_1(d)|Q_n|n^{7/2}$ similar to that in Theorem \ref{T3.1}. Our prove goes along the lines of the proof of Theorem \ref{T2.1} from \cite{VT160}. We use the following results from \cite{VT160}. Lemma \ref{L3.1} is from \cite{BLM}.

\begin{Lemma}\label{L3.1} Let $\{g_j\}_{j=1}^m$ be independent random variables with $\bE g_j=0$, $j=1,\dots,m$, which satisfy
$$
\|g_j\|_1\le 2,\qquad \|g_j\|_\infty \le M,\qquad j=1,\dots,m.
$$
Then for any $\eta \in (0,1)$ we have the following bound on the probability
$$
\bP\left\{\left|\sum_{j=1}^m g_j\right|\ge m\eta\right\} < 2\exp\left(-\frac{m\eta^2}{8M}\right).
$$
\end{Lemma}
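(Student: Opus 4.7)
The claim is a standard Bernstein-type concentration inequality, so my plan is the classical Chernoff--Cram\'er exponential moment method. First I would reduce to a one-sided bound by applying the same estimate to $\{g_j\}$ and to $\{-g_j\}$ (which are still independent, mean zero, and bounded by $M$); the union bound on the two tails supplies the factor of $2$ in the conclusion. For the one-sided bound, the exponential Markov inequality gives, for any $t>0$,
$$
\bP\left\{\sum_{j=1}^m g_j \ge m\eta\right\} \le e^{-tm\eta}\prod_{j=1}^m \bE\left[e^{tg_j}\right],
$$
where independence was used to factor the moment generating function.

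The next step is to control each one-variable MGF $\bE[e^{tg_j}]$. The crucial observation is that the $L_1$ and $L_\infty$ hypotheses together yield the second-moment estimate
$$
\bE g_j^2 \le \|g_j\|_\infty\cdot \bE|g_j| \le 2M.
$$
Expanding the exponential, using $\bE g_j=0$, and bounding $|g_j|^k\le M^{k-2}g_j^2$ for $k\ge 2$, one obtains
$$
\bE[e^{tg_j}] \le 1+\bE g_j^2\sum_{k\ge 2}\frac{t^k M^{k-2}}{k!} \le \exp\!\left(\frac{t^2\,\bE g_j^2/2}{1-tM/3}\right)
$$
for $tM<3$, which is the standard Bernstein MGF bound.

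Substituting back into the Chernoff inequality and using $\bE g_j^2\le 2M$ gives
$$
\bP\left\{\sum_{j=1}^m g_j\ge m\eta\right\} \le \exp\!\left(-tm\eta+\frac{t^2 Mm}{1-tM/3}\right),
$$
and it remains to optimize in $t$. Choosing $t$ of the form $c\eta/M$ with $c$ a suitable absolute constant, and using $\eta\in(0,1)$ to guarantee $tM<3$, produces an exponent of the form $-c'\,m\eta^2/M$ for an absolute constant $c'$.

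The conceptual content sits entirely in (i) the second-moment bound $\bE g_j^2\le 2M$ extracted from the mixed $L_1$ and $L_\infty$ assumptions, and (ii) the standard Bernstein-type control of the MGF. I expect the only delicate point to be the final bookkeeping, where one has to verify that the chosen $t$ actually yields the constant $1/8$ in the exponent rather than a slightly worse numerical constant. This is not a conceptual obstacle; if the optimization is tightened carefully (or one simply tolerates a mildly weaker constant) the stated bound with the factor $-m\eta^2/(8M)$ follows.
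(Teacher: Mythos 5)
The paper does not actually prove this lemma; it cites it directly to Bourgain--Lindenstrauss--Milman \cite{BLM}, so there is no in-paper proof to compare against. Your Chernoff/Bernstein-type argument is the standard route to this bound (and is essentially what is in \cite{BLM}), and it is correct. The two load-bearing points are both in order: the variance estimate $\bE g_j^2 \le \|g_j\|_\infty \bE|g_j| \le 2M$, which is exactly where the combined $L_1$/$L_\infty$ hypotheses enter, and the Bernstein MGF bound $\bE e^{tg_j}\le\exp\bigl(\tfrac{t^2\bE g_j^2/2}{1-tM/3}\bigr)$ valid for $tM<3$. The only thing you flagged as potentially delicate, the numerical constant, in fact works out with plenty of slack: take $t=\eta/(2M)$, so that $tM=\eta/2<1/2$ and $1-tM/3\ge 5/6$; then the exponent is bounded by $-\tfrac{m\eta^2}{2M}+\tfrac{6}{5}\cdot\tfrac{m\eta^2}{4M}=-\tfrac{m\eta^2}{5M}$, and since $\tfrac15>\tfrac18$ the one-sided tail is strictly below $\exp(-m\eta^2/(8M))$; the union bound over $\{g_j\}$ and $\{-g_j\}$ supplies the factor $2$ and preserves strict inequality. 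So the proposal is complete as written.
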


We now consider measurable functions $f(\bx)$, $\bx\in \Omega$. For $1\le q<\infty$ define
$$
L^q_\bz(f) := \frac{1}{m}\sum_{j=1}^m |f(\bx^j)|^q -\|f\|_q^q,\qquad \bz:= (\bx^1,\dots,\bx^m).
$$
Let $\mu$ be a probabilistic measure on $\Omega$. Denote $\mu^m := \mu\times\cdots\times\mu$ the probabilistic measure on $\Omega^m := \Omega\times\cdots\times\Omega$.
We will need the following inequality, which is a corollary of Lemma \ref{L3.1} (see \cite{VT160}). 

\begin{Proposition}\label{P3.1} Let $f_j\in L_1(\Omega)$ be such that
$$
\|f_j\|_1 \le 1/2,\quad j=1,2;\qquad \|f_1-f_2\|_\infty \le \delta.
$$
Then
\be\label{3.1}
\mu^m\{\bz: |L^1_\bz(f_1) -L^1_\bz(f_2)| \ge \eta\} < 2\exp\left(-\frac{m\eta^2}{16\delta}\right).
\ee
\end{Proposition}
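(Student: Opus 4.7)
The plan is to view $L^1_\bz(f_1) - L^1_\bz(f_2)$ as an average of independent centered random variables and apply Lemma \ref{L3.1} directly. Set $h(\bx) := |f_1(\bx)| - |f_2(\bx)|$ and $g_j := h(\bx^j) - \int_\Omega h\,d\mu$. Since the $\bx^j$ are drawn independently from $\mu$, the $g_j$ are i.i.d.\ with mean zero, and a direct computation gives
$$\sum_{j=1}^m g_j = m\bigl(L^1_\bz(f_1) - L^1_\bz(f_2)\bigr),$$
so the tail bound asked for is precisely the conclusion of Lemma \ref{L3.1} for these $g_j$, provided the two hypotheses of the lemma are verified with the correct value of $M$.

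The reverse triangle inequality gives $|h(\bx)| \le |f_1(\bx) - f_2(\bx)|$ pointwise, so $\|h\|_\infty \le \delta$ and $\|h\|_1 \le \|f_1 - f_2\|_1 \le \|f_1\|_1 + \|f_2\|_1 \le 1$. Because $\mu$ is a probability measure, $|\bE h| \le \|h\|_1 \le \|h\|_\infty \le \delta$. Therefore
$$\bE|g_j| \le \bE|h| + |\bE h| \le 2\|h\|_1 \le 2, \qquad \|g_j\|_\infty \le \|h\|_\infty + |\bE h| \le 2\delta.$$
Applying Lemma \ref{L3.1} with $M = 2\delta$ yields
$$\mu^m\{\bz : |L^1_\bz(f_1) - L^1_\bz(f_2)| \ge \eta\} = \bP\left\{\left|\sum_{j=1}^m g_j\right| \ge m\eta\right\} < 2\exp\left(-\frac{m\eta^2}{16\delta}\right),$$
which is the desired inequality.

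There is no substantive obstacle here; the only subtlety worth flagging is the factor of two in $\|g_j\|_\infty \le 2\delta$, coming from the recentering by $\bE h$, which is what produces the constant $16 = 8 \cdot 2$ in the exponent. The hypothesis $\|f_j\|_1 \le 1/2$ is precisely what makes the $L_1$-bound on $g_j$ come out at exactly $2$, so Lemma \ref{L3.1} applies without any rescaling.
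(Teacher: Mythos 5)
Your proof is correct and follows the same route the paper intends: the paper states Proposition \ref{P3.1} as ``a corollary of Lemma \ref{L3.1}'' (with the details deferred to \cite{VT160}), and your choice $g_j := h(\bx^j)-\bE h$ with $h=|f_1|-|f_2|$, together with the bounds $\|h\|_\infty\le\delta$, $|\bE h|\le\delta$, $\|h\|_1\le 1$, is exactly the natural way to verify the hypotheses of that lemma with $M=2\delta$. One small point worth noting for completeness: Lemma \ref{L3.1} is stated for $\eta\in(0,1)$, a restriction the Proposition's statement tacitly inherits (and which is harmless in the subsequent chaining argument, where $\eta$ is always small).
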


We now prove the Marcinkiewicz-type theorem for discretization of the $L_1$ norm of polynomials from $\Tr(Q)$. 
\begin{Theorem}\label{T3.3} For any $Q\subset \Pi(\bN)$ with $\bN=(2^n,\dots,2^n)$
 there exists a set of $m \le C_1(d)|Q|n^{7/2}$ points $\xi^j\in \T^d$, $j=1,\dots,m$ such that for any $f\in \Tr(Q)$ 
we have
$$
C_2\|f\|_1 \le \frac{1}{m}\sum_{j=1}^m |f(\xi^j)| \le C_3\|f\|_1.
$$
\end{Theorem}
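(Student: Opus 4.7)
The plan is to follow the probabilistic chaining used for Theorem \ref{T3.2} in \cite{VT160}, but combine it with the sharper entropy estimate of Theorem \ref{T2.6} and replace the linear chain by a dyadic one. By homogeneity in $f$, it suffices to find knots $\bz=(\xi^1,\dots,\xi^m)$ with $m\le C_1(d)|Q|n^{7/2}$ such that
$$
\sup_{f\in F'}|L^1_\bz(f)|\le 1/4,\qquad F':=\{f\in\Tr(Q):\|f\|_1\le 1/2\};
$$
rescaling $f\mapsto f/(2\|f\|_1)$ then yields the desired discretization with $C_2=1/2$, $C_3=3/2$.

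I would first set up the dyadic chain. For each integer $l\ge 1$ let $\cY_l\subset F'$ be a net in $L_\infty$ of cardinality at most $2^{2^l}$ realizing (up to a factor of $2$) the entropy number $\e_{2^l}(F',L_\infty)$, and for $f\in F'$ let $\pi_l(f)\in\cY_l$ be a closest element, so that $\|f-\pi_l(f)\|_\infty\le\e_{2^l}(F',L_\infty)$. Put $\pi_0(f):=0$; Nikol'skii's inequality yields $\|g\|_\infty\le|Q|\|g\|_1\le|Q|/2$ for every $g\in F'$, hence $\|\pi_1(f)-\pi_0(f)\|_\infty\le|Q|$. Since $\e_{2^l}\to 0$ by the tail clause of Theorem \ref{T2.6}, the telescoping series $f=\sum_{l\ge 1}(\pi_l(f)-\pi_{l-1}(f))$ converges in $L_\infty$, and continuity of $L^1_\bz$ with respect to $\|\cdot\|_\infty$ gives
$$
L^1_\bz(f)=\sum_{l\ge 1}\bigl(L^1_\bz(\pi_l(f))-L^1_\bz(\pi_{l-1}(f))\bigr),
$$
with $\|\pi_l(f)\|_1,\|\pi_{l-1}(f)\|_1\le 1/2$ and $\|\pi_l(f)-\pi_{l-1}(f)\|_\infty\le\delta_l$, where $\delta_1:=|Q|$ and $\delta_l:=2\e_{2^{l-1}}(F',L_\infty)$ for $l\ge 2$.

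Next, at each level $l$ I would apply Proposition \ref{P3.1} to every pair $(g,h)\in\cY_l\times\cY_{l-1}$ (with the convention $\cY_0:=\{0\}$) and union-bound over the at most $2^{2^l+2^{l-1}}$ pairs per level and over $l\ge 1$. Choosing $\eta_l:=A\sqrt{2^l\delta_l/m}$ with a sufficiently large absolute constant $A$ makes the level-$l$ failure probability at most $2^{-l-2}$, so with probability at least $3/4$ no chain increment at any level exceeds $\eta_l$; on that event $|L^1_\bz(f)|\le\sum_{l\ge 1}\eta_l$ uniformly for every $f\in F'$.

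The final bookkeeping uses Theorem \ref{T2.6}. Set $L:=\lceil\log_2(2|Q|)\rceil$, which is $\ll n$ since $|Q|\le|\Pi(\bN)|\asymp 2^{nd}$ and $d$ is absorbed into constants. For $2\le l\le L$ the estimate $\e_{2^{l-1}}\ll n^{3/2}|Q|/2^{l-1}$ gives $2^l\delta_l\ll n^{3/2}|Q|$, hence $\eta_l\ll\sqrt{n^{3/2}|Q|/m}$; summing these $L$ terms contributes $\ll L\sqrt{n^{3/2}|Q|/m}\ll\sqrt{n^{7/2}|Q|/m}$. The initial term $\eta_1\ll\sqrt{|Q|/m}$ is dominated, and for $l>L$ the bound $\e_{2^{l-1}}\ll n^{3/2}2^{-2^{l-1}/(2|Q|)}$ decays doubly exponentially, so the tail sums to $\ll\sqrt{n^{3/2}|Q|/m}$ via a geometric-type estimate. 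Thus $\sum_l\eta_l\le 1/4$ whenever $m\ge C_1(d)|Q|n^{7/2}$, and the probabilistic existence argument concludes. The dyadic accounting is where the entire improvement lives and is the main obstacle: a linear chain over every $k\le 2|Q|$ would accumulate $\sim|Q|$ main terms of size $\sqrt{n^{3/2}|Q|/m}$ and force $m\gtrsim|Q|^3 n^{3/2}$, whereas geometric grouping cuts the number of main levels to $L\ll n$; the $2^l$ blow-up in the level-$l$ union bound is then exactly compensated by the $1/2^l$ decay of $\e_{2^l}$ supplied by Theorem \ref{T2.6}, yielding the claimed $|Q|n^{7/2}$ bound.
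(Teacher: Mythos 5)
Your proposal is correct and follows essentially the same dyadic chaining strategy as the paper's proof: it is driven by the entropy estimate of Theorem~\ref{T2.6}, the Bernstein-type concentration in Proposition~\ref{P3.1}, and a union bound over dyadic $\e_{2^l}$-nets of $\Tr(Q)_1$ in $L_\infty$. The only substantive difference is bookkeeping: you use a Dudley-style level-dependent threshold $\eta_l\asymp\sqrt{2^l\delta_l/m}$ and sum the resulting series, whereas the paper sets the constant threshold $\eta_j=1/(16nd)$ at every level and counts $J\asymp nd$ levels directly -- both choices produce the same $m\ll|Q|n^{7/2}$ requirement.
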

\begin{proof} We use the technique developed in learning theory and in distribution-free theory of regression known under the name of {\it chaining technique}. Proposition \ref{P3.1} plays an important role in our proof. It is used in the proof of the bound on the probability of the event 
$\{\sup_{f\in W}|L^1_\bz(f)|\ge \eta\}$ for a function class $W$. The corresponding proof is
  in terms of the entropy numbers of $W$.    
 
 We consider the case $X$ is 
 $\C(\Omega)$ the space of functions continuous on a compact subset $\Omega$ of $\bR^d$ with the norm
$$
\|f\|_\infty:= \sup_{\bx\in \Omega}|f(\bx)|.
$$
  We use the abbreviated notations
$$
  \e_n(W):= \e_n(W,\C).
$$
In our case  
\be\label{3.3}
W:=W(Q) := \{t\in \Tr(Q): \|t\|_1 = 1/2\}.
\ee
We use Theorem \ref{T2.6} proved in Section \ref{entropy}. We formulate it here for the reader's convenience. We stress that Theorem \ref{T2.6'} is the only result on the specific features of the $\Tr(Q)$, which we use in the proof of Theorem \ref{T3.3}. 
\begin{Theorem}\label{T2.6'} For any $Q\subset \Pi(\bN)$ with $\bN=(2^n,\dots,2^n)$ we have
$$
\e_k(\Tr( Q)_1,L_\infty) \le 2\e_k:= 2C_4(d)  \left\{\begin{array}{ll} n^{3/2}(|Q|/k), &\quad k\le 2| Q|,\\
n^{3/2}2^{-k/(2| Q|)},&\quad k\ge 2| Q|.\end{array} \right.
$$
\end{Theorem}
Specify $\eta=1/4$.
Denote $\de_j := \e_{2^j}$, $j=0,1,\dots$, and consider minimal $\de_j$-nets ${\mathcal N}_j \subset W$ of $W$ in $\C(\T^d)$. We use the notation $N_j:= |{\mathcal N}_j|$. Let $J$ be the minimal $j$ satisfying $\de_j \le 1/16$. For $j=1,\dots,J$ we define a mapping $A_j$ that associates with a function $f\in W$ a function $A_j(f) \in {\mathcal N}_j$ closest to $f$ in the $\C$ norm. Then, clearly,
$$
\|f-A_j(f)\|_\C \le \de_j.  
$$
We use the mappings $A_j$, $j=1,\dots, J$ to associate with a function $f\in W$ a sequence (a chain) of functions $f_J, f_{J-1},\dots, f_1$ in the following way
$$
f_J := A_J(f),\quad f_j:=A_j(f_{j+1}),\quad j=J-1,\dots,1.
$$
Let us find an upper bound for $J$, defined above. Certainly, we can carry out the proof under assumption that $C_4(d)\ge 1$. Then the definition of $J$ implies that $2^J\ge 2|Q|$ and
\be\label{3.5}
C_4(d)n^{3/2}2^{-2^{J-1}/(2| Q|)} \ge 1/16.
\ee
We derive from (\ref{3.5})
\be\label{3.6}
2^J \le 4|Q| C(d)\log n,\qquad J \le 2dn 
\ee
for sufficiently large $n\ge C(d)$. 

Set 
$$
\eta_j := \frac{1}{16nd},\quad j=1,\dots,J.
$$

We now proceed to the estimate of $\mu^m\{\bz:\sup_{f\in W}|L^1_\bz(f)|\ge 1/4\}$. First of all   by the following simple  Proposition \ref{P3.2}   the assumption $\de_J\le 1/16$ implies that if $|L^1_\bz(f)| \ge 1/4$ then $|L^1_\bz(f_J)|\ge 1/8$. 
\begin{Proposition}\label{P3.2} If $\|f_1-f_2\|_\infty\le \delta$, then
$$
|L^1_\bz(f_1)-L^1_\bz(f_2)| \le 2\delta.
$$
\end{Proposition}
Rewriting 
$$
L^1_\bz(f_J) = L^1_\bz(f_J)-L^1_\bz(f_{J-1}) +\dots+L^1_\bz(f_{2})-L^1_\bz(f_1)+L^1_\bz(f_1)
$$
we conclude that if $|L^1_\bz(f)| \ge 1/4$ then at least one of the following events occurs:
$$
|L^1_\bz(f_j)-L^1_\bz(f_{j-1})|\ge \eta_j\quad\text{for some}\quad j\in (1,J] \quad\text{or}\quad |L^1_\bz(f_1)|\ge \eta_1.
$$
Therefore
\begin{eqnarray}\label{3.6'}
\mu^m\{\bz:\sup_{f\in W}|L^1_\bz(f)|\ge1/4\}
\le \mu^m\{\bz:\sup_{f\in {\mathcal N}_1}|L^1_\bz(f)|\ge\eta_1\} \nonumber \\
+\sum_{j\in(1,J]}\sum_{f\in {\mathcal N}_j}\mu^m
\{\bz:|L^1_\bz(f)-L^1_\bz(A_{j-1}(f))|\ge\eta_j\}\nonumber\\
\le \mu^m\{\bz:\sup_{f\in {\mathcal N}_1}|L^1_\bz(f)|\ge\eta_1\}\nonumber\\
+\sum_{j\in(1,J]} N_j\sup_{f\in W}\mu^m
\{\bz:|L^1_\bz(f)-L^1_\bz(A_{j-1}(f))|\ge\eta_j\}. 
\end{eqnarray}
  Applying  Proposition \ref{P3.1} we obtain
$$
\sup_{f\in W} \mu^m\{\bz:|L^1_\bz(f)-L^1_\bz(A_{j-1}(f))|\ge \eta_j\} \le 2\exp\left(-\frac{m\eta_j^2}{16\de_{j-1}}\right).
$$

We now make further estimates for a specific $m=C_1(d)|Q|n^{7/2}$ with large 
enough $C_1(d)$. For $j$ such that $2^j\le 2|Q|$ we obtain from the definition of 
$\delta_j$
$$
\frac{m\eta_j^2}{\delta_{j-1}} \ge \frac{C_1(d)n^{3/2}2^{j-1}}{C_5(d)n^{3/2} } \ge \frac{C_1(d)}{2C_5(d)}2^j.
$$
By our choice of $\delta_j=\e_{2^j}$ we get $N_j\le 2^{2^j} <e^{2^j}$ and, therefore,
\be\label{3.7}
N_j\exp\left(-\frac{m\eta_j^2}{16\de_{j-1}}\right)\le \exp(-2^j)
\ee
for sufficiently large $C_1(d)$.

In the case $2^j\in (2|Q|, 2^J]$ we have
$$
\frac{m\eta_j^2}{\delta_{j-1}} \ge \frac{C_1(d)|Q|n^{3/2}}{C_6(d)n^{3/2}2^{-2^{j-1}/(2|Q|)}} \ge \frac{C_1(d)}{C_7(d)}2^j
$$
   and
\be\label{3.8}
N_j\exp\left(-\frac{m\eta_j^2}{16\de_{j-1}}\right)\le \exp(-2^j)
\ee
for sufficiently large $C_1(d)$.

We now estimate $\mu^m\{\bz:\sup_{f\in {\mathcal N}_1}|L^1_\bz(f)|\ge\eta_1\}$.
We use Lemma \ref{L3.1} with $g_j(\bz) = |f(\bx^j)|-\|f\|_1$. To estimate $\|g_j\|_\infty$ it is sufficient to use the following trivial Nikol'skii-type inequality for the 
trigonometric polynomials: 
\be\label{3.9}
\|f\|_\infty \le |Q| \|f\|_1,\qquad f\in \Tr(Q).
\ee
  Then Lemma \ref{L3.1} gives
$$
\mu^m\{\bz:\sup_{f\in {\mathcal N}_1}|L^1_\bz(f)|\ge\eta_1\}\le N_1\exp\left(-\frac{m\eta_1^2}{C|Q|}\right) \le 1/4
$$
for sufficiently large $C_1(d)$. Substituting the above estimates into (\ref{3.6'}) we obtain
$$
\mu^m\{\bz:\sup_{f\in W}|L^1_\bz(f)|\ge1/4\} <1.
$$
Therefore, there exists $\bz_0=(\xi^1,\dots,\xi^m)$ such that for any $f\in W$ we have 
$$
|L^1_{\bz_0}(f)| \le 1/4.
$$
Taking into account that $\|f\|_1=1/2$ for $f\in W$ we obtain the statement of Theorem \ref{T3.3} with $C_2 =1/2$, $C_3=3/2$.
 
\end{proof}

In the above proof of Theorem \ref{T3.3} we specified $\eta =1/4$. If instead we 
take $\eta \in [2^{-2^{nd/2}},1/4]$, define $J(\eta)$ to be the minimal $j$ satisfying $\delta_j \le \eta/4$ and set 
$$
\eta_j := \frac{\eta}{4nd},
$$
then we obtain the following generalization of Theorem \ref{T3.3}.

\begin{Theorem}\label{T3.3e} For any $Q\subset \Pi(\bN)$ with $\bN=(2^n,\dots,2^n)$ and $\epsilon \in [2^{1-2^{nd/2}},1/2]$
 there exists a set of $m \le C_1(d)|Q|n^{7/2}\epsilon^{-2}$ points $\xi^j\in \T^d$, $j=1,\dots,m$ such that for any $f\in \Tr(Q)$ 
we have
$$
(1-\epsilon)\|f\|_1 \le \frac{1}{m}\sum_{j=1}^m |f(\xi^j)| \le (1+\epsilon)\|f\|_1.
$$
\end{Theorem}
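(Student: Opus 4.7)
The plan is to re-run the chaining proof of Theorem \ref{T3.3} verbatim, with the single change that the target deviation is tightened from $1/4$ to $\epsilon/2$. Since we work on the ``slice'' $W := \{t\in \Tr(Q):\|t\|_1=1/2\}$, showing $\sup_{f\in W}|L^1_\bz(f)|\le \epsilon/2$ immediately yields $(1-\epsilon)\|f\|_1 \le m^{-1}\sum |f(\xi^j)|\le (1+\epsilon)\|f\|_1$ by homogeneity. Using the hint, I set $\eta := \epsilon/2$, pick $J=J(\epsilon)$ to be the minimal $j$ with $\delta_j := \e_{2^j}(\Tr(Q)_1,L_\infty) \le \eta/4 = \epsilon/8$, and take chaining steps of size $\eta_j := \eta/(4nd) = \epsilon/(8nd)$.

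Next I check the chaining bookkeeping. Because $\sum_{j=1}^J \eta_j \le J\cdot \epsilon/(8nd)$, the telescoping argument from the proof of Theorem \ref{T3.3} works provided $J\le 2nd$. This is where the lower bound $\epsilon \ge 2^{1-2^{nd/2}}$ enters: the defining inequality for $J-1$ gives $C_4(d)n^{3/2}2^{-2^{J-1}/(2|Q|)} > \epsilon/8$, hence
\[
2^{J} \le 4|Q|\bigl(C(d)\log n + \log_2(1/\epsilon)\bigr) \le 4|Q|\bigl(C(d)\log n + 2^{nd/2}\bigr),
\]
which for $n\ge C(d)$ remains below $2^{2nd}$ since $|Q|\le (2^{n+1}+1)^d$. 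So $J\le 2nd$ as required.

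For each chained deviation I apply Proposition \ref{P3.1} exactly as before. With the new choice $m = C_1(d)|Q|n^{7/2}\epsilon^{-2}$ and $\eta_j = \epsilon/(8nd)$, the $\epsilon^{\pm 2}$ factors cancel and give
\[
m\eta_j^2 \ge c(d)|Q|n^{3/2},
\]
which is precisely the quantity that controlled $\delta_{j-1}^{-1} m \eta_j^2 \gg 2^j$ in the original proof, in both regimes $2^j \le 2|Q|$ and $2|Q|< 2^j \le 2^J$. Hence the estimates (\ref{3.7}), (\ref{3.8}) carry over unchanged, yielding $\sum_{j\in(1,J]} N_j\exp(-m\eta_j^2/(16\delta_{j-1})) \le \sum_j e^{-2^j} \le 1/2$ for $C_1(d)$ large enough. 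The anchor term on $\mathcal N_1$ is likewise handled by Lemma \ref{L3.1} together with the trivial Nikol'skii inequality $\|f\|_\infty \le |Q|$, producing a bound $\le 1/4$ after the same cancellation.

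The only real subtlety I expect is the interplay between $\epsilon$ and $J$: one must confirm that pushing $\epsilon$ down to the permitted threshold $2^{1-2^{nd/2}}$ does not inflate $J$ past $2nd$ and thereby break the telescoping bound $\sum\eta_j \le \eta/2$. That is exactly why the hypothesis on $\epsilon$ is calibrated to $2^{nd/2}$ rather than a larger exponent. Once this is verified, combining the chained probabilities as in (\ref{3.6'}) gives $\mu^m\{\bz:\sup_{f\in W}|L^1_\bz(f)|\ge \epsilon/2\} < 1$, and the existence of a good point set $\{\xi^1,\dots,\xi^m\}$ follows.
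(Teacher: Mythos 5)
Your proposal is correct and follows exactly the route the paper indicates: replace $\eta=1/4$ by $\eta=\epsilon/2$, define $J(\eta)$ as the least $j$ with $\delta_j\le\eta/4$, set $\eta_j=\eta/(4nd)$, and re-run the chaining argument, with the lower bound on $\epsilon$ guaranteeing $J\le 2nd$ so the telescoping closes and the $\epsilon^{\pm2}$ factors cancel in $m\eta_j^2$. This matches the paper's own (sketched) derivation of Theorem~\ref{T3.3e} from the proof of Theorem~\ref{T3.3}.
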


\section{Some Marcinkiewicz-type discretization theorems for general polynomials}
\label{MX}

In this section we extend the technique developed in Sections \ref{entropy} and \ref{M} to the case of a general orthonormal system $\{u_i\}_{i=1}^N$ on a compact $\Omega \subset \R^d$, which satisfies  conditions {\bf A}, {\bf B}, and {\bf C} from the Introduction. Let $\mu$ be a probability measure on $\Omega$. It is convenient for us to assume that $u_i$, $i=1,\dots,N$, are real functions and denote
$$
\<u,v\> := \int_\Omega u(\bx)v(\bx) d\mu, \quad \|u\|_2 := \<u,u\>^{1/2}.
$$


 

Denote the unit $L_p$ ball in $X_N$ by 
$$
X^p_N :=\{f\in X_N: \|f\|_p\le 1\}.
$$
We begin with the estimates of the entropy numbers $\e_k(X^1_N,L_\infty)$. 
We use the same strategy as above: first we get bounds on $m$-term approximations for $X^1_N$ in $L_2$ with respect to a dictionary $\D^1$, second we obtain 
bounds on $m$-term approximations for $X^2_N$ in $L_\infty$ with respect to a dictionary $\D^2$, third we get  bounds on $m$-term approximations for $X^1_N$ in $L_\infty$ with respect to a dictionary $\D^3=\D^1\cup \D^2$. Then we apply 
Theorem \ref{2.1} to obtain the entropy numbers estimates. 

\subsection{Sparse approximation in $L_2$} 
We begin with the study of $m$-term approximations with respect to the dictionary
$$
\D^0:= \{g_\by(\bx)\}_{\by\in\Omega},\quad g_\by(\bx):= (K_2N)^{-1/2}\D_N(\cdot,\by),
$$
where
$$
\D_N(\bx,\by) := \sum_{i=1}^N u_i(\bx)u_i(\by)
$$
is the Dirichlet kernel for the system $\{u_i\}_{i=1}^N$. Then assumption {\bf B} guarantees that $\|g_\by\|_2 \le 1$. We now use the following greedy-type algorithm (see \cite{Tbook}, p.82).

{\bf Relaxed Greedy Algorithm (RGA).}  Let $f^r_0:=f$
and $G_0^r(f):= 0$. For a  function $h$ from a real Hilbert space $H$, let $g=g(h)$ denote the function from $\D^\pm:=\{\pm g:g\in\D\}$, which
maximizes $\langle h,g\rangle$ (we assume the existence of such an element).  Then, for each $m\ge 1$, we inductively define 
$$ 
 G_m^r(f):= 
\left(1-\frac{1}{m}\right)G_{m-1}^r(f)+\frac{1}{m}g(f_{m-1}^r), \quad
f_m^r:= f-G_m^r(f). 
$$
We use the following known result (see \cite{Tbook}, p.90).
\begin{Theorem}\label{T4.1} For the Relaxed Greedy Algorithm we have, for each $f\in
A_1(\D^\pm)$, the estimate
 $$
\|f-G_m^r(f)\|\le \frac{2}{\sqrt{m}},\quad m\ge 1.
$$
\end{Theorem}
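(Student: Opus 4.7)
The plan is a clean induction on $m$ together with a one-step recurrence that exploits the convex-hull structure of $A_1(\D^\pm)$. Abbreviate $a_m := \|f_m^r\|^2$ and $g_m := g(f_{m-1}^r)$. The first step is to rewrite the update in the affine form
\[
f_m^r = \left(1-\tfrac{1}{m}\right) f_{m-1}^r + \tfrac{1}{m}\bigl(f - g_m\bigr),
\]
which follows by subtracting the definition of $G_m^r(f)$ from $f = (1-1/m)f + (1/m)f$ and using $f - G_{m-1}^r(f) = f_{m-1}^r$. Squaring and expanding the inner product yields
\[
a_m = \left(1-\tfrac{1}{m}\right)^{\!2} a_{m-1} + \tfrac{2}{m}\!\left(1-\tfrac{1}{m}\right)\!\langle f_{m-1}^r, f-g_m\rangle + \tfrac{1}{m^{2}}\|f - g_m\|^{2}.
\]

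The second step is to kill the cross term. Because $f \in A_1(\D^\pm)$, one may (after an approximation argument) write $f = \sum_i \lambda_i \phi_i$ with $\phi_i \in \D^\pm$, $\lambda_i \ge 0$, $\sum_i \lambda_i = 1$, so
\[
\langle f_{m-1}^r, f\rangle = \sum_i \lambda_i \langle f_{m-1}^r, \phi_i\rangle \;\le\; \sup_{\phi\in\D^\pm}\langle f_{m-1}^r,\phi\rangle \;=\; \langle f_{m-1}^r, g_m\rangle,
\]
by the defining maximization property of $g_m$. Hence $\langle f_{m-1}^r, f-g_m\rangle \le 0$. The third step bounds the last term using $\|f\|\le 1$ (since $A_1(\D^\pm)$ lies in the unit ball of the convex hull of unit-norm elements) and $\|g_m\|\le 1$, giving $\|f-g_m\|^{2}\le 4$. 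Combining these yields the key recurrence
\[
a_m \;\le\; \left(1-\tfrac{1}{m}\right)^{\!2} a_{m-1} + \tfrac{4}{m^{2}}.
\]

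The final step is the induction itself. For $m=1$ we have $G_0^r(f)=0$, so $f_1^r = f - g(f)$ and $a_1 \le (\|f\|+\|g(f)\|)^2 \le 4$, matching $4/m$. Assuming $a_{m-1}\le 4/(m-1)$,
\[
a_m \;\le\; \frac{(m-1)^{2}}{m^{2}}\cdot\frac{4}{m-1} + \frac{4}{m^{2}} \;=\; \frac{4(m-1)+4}{m^{2}} \;=\; \frac{4}{m},
\]
completing the induction and giving $\|f-G_m^r(f)\|\le 2/\sqrt{m}$.

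The only nontrivial point is the sign of the cross term, which is precisely where the assumption $f\in A_1(\D^\pm)$ (rather than a general element of $H$) enters; everything else is algebraic manipulation and a routine inductive bookkeeping. No obstacle is expected beyond taking care of the closure in $A_1(\D^\pm)$, which is handled by approximating $f$ by convex combinations and passing to the limit.
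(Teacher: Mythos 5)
The paper does not prove Theorem \ref{T4.1} at all; it simply cites it as a known result from \cite{Tbook}, p.~90. Your proof is correct and is, as far as I can tell, essentially the same argument as the one given in the cited source: recast the residual update as $f_m^r = (1-\tfrac1m)f_{m-1}^r + \tfrac1m(f-g_m)$, expand the square, kill the cross term via the maximality of $g_m$ over $\D^\pm$ together with the convex-combination structure of $f$, bound the error term by $4/m^2$, and close by the elementary induction $a_m\le 4/m$. The one point worth stating cleanly (you gesture at it at the end) is that the cross-term inequality $\langle f_{m-1}^r,f\rangle\le\sup_{\phi\in\D^\pm}\langle f_{m-1}^r,\phi\rangle$ extends from the convex hull to its closure simply by continuity of the linear functional $\langle f_{m-1}^r,\cdot\rangle$; no subtle limiting argument is needed. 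Your remark that the base case gives $a_1\le 4$ is fine, though one can in fact get $a_1\le 1-\|f\|^2\le 1$ directly from $\langle f,g_1\rangle\ge\|f\|^2$ --- this doesn't change the final bound.
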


In our application of the above RGA the Hilbert space $H$ is the $X_N$ with the $L_2$ norm, the dictionary $\D$ is the $\D^0$ defined above. Using representation
\be\label{4.4}
f(\bx) = \int_{\Omega} f(\by) \D_N(\bx,\by)d\mu(\by)
\ee
we see that the search for $g\in(\D^0)^\pm$ maximizing $\<h,g\>$, $h\in X_N$, is equivalent to the search for $\by\in\Omega$ maximizing $|h(\by)|$. A function 
$h$ from $X_N$ is continuos on the compact $\Omega$ and, therefore, such 
a maximizing $\by_{\max}$ exists. This means that we can run the RGA. 

For $f\in X^1_N$ by representation (\ref{4.4})  we obtain
$$
f(\bx) = \int_{\Omega} f(\by) \D_N(\bx,\by)d\mu(\by)  
$$
$$
= (K_2N)^{1/2}\int_{\Omega} f(\by) (K_2N)^{-1/2}\D_N(\bx,\by)d\mu(\by).
$$
Therefore,
$$
(K_2N)^{-1/2} f \in A_1((\D^0)^\pm),\quad \text{or}\quad f\in A_1((\D^0)^\pm,(K_2N)^{1/2}).
$$
Applying Theorem \ref{T4.1} we get the following result.
\begin{Theorem}\label{T4.2} For the Relaxed Greedy Algorithm with respect to $\D^0$ we have, for each $f\in X^1_N$, the estimate
 $$
\|f-G_m^r(f)\|\le 2(K_2N/m)^{1/2},\quad m\ge 1.
$$
\end{Theorem}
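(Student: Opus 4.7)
The plan is to reduce Theorem 4.2 to Theorem 4.1 by exhibiting every $f \in X_N^1$ as $(K_2N)^{1/2}$ times an element of $A_1((\D^0)^\pm)$, and then invoking the scale-covariant form of the RGA error bound ($\|f - G_m^r(f)\|_2 \le 2M/\sqrt{m}$ whenever $f \in A_1(\D^\pm, M)$, which is the content of Theorem 4.1 once the amplitude is tracked through its proof).

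The first step is the reproducing-kernel identity for $X_N$. Orthonormality of $\{u_i\}_{i=1}^N$ in $L_2(\mu)$, together with the definition of the Dirichlet kernel $\D_N$, gives for every $f \in X_N$
$$f(\bx) = \int_\Omega f(\by)\,\D_N(\bx,\by)\,d\mu(\by) = (K_2N)^{1/2}\int_\Omega f(\by)\,g_\by(\bx)\,d\mu(\by),$$
which is exactly the display (4.4) rewritten in terms of the normalized atoms $g_\by$. The second step is to note that $\|f\|_1 \le 1$ forces $\int_\Omega |f(\by)|\,d\mu(\by) \le 1$; factoring $f(\by) = \sigma(\by)|f(\by)|$ with $\sigma(\by) \in \{-1,+1\}$ presents $f/(K_2N)^{1/2}$ as the integral, against the sub-probability measure $|f(\by)|\,d\mu(\by)$, of elements $\sigma(\by)g_\by \in (\D^0)^\pm$. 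Because $\by \mapsto g_\by$ is continuous from the compact $\Omega$ into $L_2(\mu)$ (assumption \textbf{B} gives the bound $\|g_\by\|_2 \le 1$ and, together with \textbf{A}, the continuity of $\D_N(\cdot, \by)$ in $\by$), such an integral lies in the norm closure of the convex hull of $(\D^0)^\pm$, so $f/(K_2N)^{1/2} \in A_1((\D^0)^\pm)$, i.e.\ $f \in A_1((\D^0)^\pm,(K_2N)^{1/2})$.

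The third step is purely formal: apply Theorem 4.1 with amplitude $M = (K_2N)^{1/2}$ to obtain $\|f - G_m^r(f)\|_2 \le 2(K_2N/m)^{1/2}$. There is no genuine analytic obstacle here — the work has already been done in Theorem 4.1 — but two small bookkeeping points should be verified. First, one must confirm that the RGA is well-defined, i.e.\ that the supremum $\sup_{g \in (\D^0)^\pm}\langle h, g \rangle = (K_2N)^{-1/2}\sup_{\by \in \Omega}|h(\by)|$ is attained at each residual $h \in X_N$; this is immediate from compactness of $\Omega$ and continuity of $h$ on $\Omega$, a point already made in the text preceding the statement. Second, one should check that the $M$-scaled version of Theorem 4.1 is the one actually needed: this is the only mildly nontrivial observation, and it follows from a direct inspection of the standard RGA analysis (the per-step inequality $\|f_m^r\|_2^2 \le (1-1/m)^2\|f_{m-1}^r\|_2^2 + C M^2/m^2$ with $M = \|f\|_{A_1}$), so no new idea is required.
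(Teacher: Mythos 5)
Your proof is correct and follows essentially the same route as the paper: the reproducing-kernel identity $(4.4)$ combined with $\|f\|_1\le 1$ to place $f/(K_2N)^{1/2}$ in $A_1((\D^0)^\pm)$, and then an appeal to Theorem 4.1 (rescaled). The extra details you supply — the sign decomposition $f(\by)=\sigma(\by)|f(\by)|$, continuity of $\by\mapsto g_\by$, and the explicit scale-covariance of the RGA bound — are just unpacking steps the paper leaves implicit.
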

 We need an analog of Theorem \ref{T4.2} for a discrete version of $\D^0$. 
 Take a $\delta>0$ and let $\{\by^1,\dots,\by^M\}$, $M=M(\delta)$, be a $\delta$-net of points in $\Omega$, which means that for any $\by\in \Omega$ there is a $\by^j$ from the net such that $\|\by-\by^j\|_\infty\le \delta$. It is clear that 
 \be\label{4.5}
 M(\delta) \le (C(\Omega)/\delta)^d.
 \ee
  It follows from the definition of the RGA that $G_m^r(f)\in A_1(\D^\pm)$ provided 
  $f\in A_1(\D^\pm)$. Let $f\in X^1_N$ and let $G_m^r(f)$ be its approximant from Theorem \ref{T4.2}. Then
\be\label{4.6}
  G_m^r(f) = \sum_{k=1}^m c_kg_{\by(k)},\quad \sum_{k=1}^m |c_k| \le (K_2N)^{1/2}.
\ee
  For each $\by(k)$ find $\by^{j(k)}$ from the net such that $\|\by(k)-\by^{j(k)}\|_\infty \le \delta$. Then, using assumption {\bf A} we get
$$ 
\|g_{\by(k)}-g_{\by^{j(k)}}\|_2^2 = (K_2N)^{-1}\sum_{i=1}^N |u_i(\by(k))-u_i(\by^{j(k)})|^2 
$$
\be\label{4.7} 
\le (K_2N)^{-1} K_1^2 N^{1+2\beta} \delta^{2\alpha}.
\ee
Denote
$$
t_m(f) := \sum_{k=1}^m c_kg_{\by^{j(k)}}.
$$
Combining (\ref{4.6}) and (\ref{4.7}) we obtain
\be\label{4.8}
\|G_m^r(f)-t_m(f)\|_2 \le (K_2N)^{1/2}K_2^{-1/2} K_1N^\beta\delta^\alpha.
\ee
Choosing $\delta$ such that 
$$
\delta_0^\alpha = K_1^{-1}N^{-1/2-\beta}  
$$
we obtain by Theorem \ref{T4.2} and (\ref{4.8}) that for $f\in X^1_N$
\be\label{4.9}
\|f-t_m(f)\|_2 \le 3(K_2N/m)^{1/2},\quad m\le N.
\ee
Inequality (\ref{4.5}) gives
\be\label{4.10}
M(\delta_0) \le C(K_1,\Omega,d) N^{c(\alpha,\beta,d)}.
\ee
Define the dictionary $\D^1$ as follows
$$
\D^1 := \{g_{\by^j}\}_{j=1}^M.
$$
Relation (\ref{4.9}) gives us the following theorem.
\begin{Theorem}\label{T4.3} We have
$$
\sigma_m(X^1_N,\D^1)_2 \le 3(K_2N/m)^{1/2}.
$$
\end{Theorem}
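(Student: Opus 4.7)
The plan is to reduce the $m$-term approximation problem for the discrete dictionary $\D^1$ to the one already solved for the continuous dictionary $\D^0$ via the RGA (Theorem \ref{T4.2}), and then absorb the discretization error into the final constant by a careful choice of net.

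First, fix $f\in X^1_N$ and apply the Relaxed Greedy Algorithm with respect to $\D^0$. Theorem \ref{T4.2} provides an approximant of the form $G_m^r(f)=\sum_{k=1}^m c_k g_{\by(k)}$ with $\|f-G_m^r(f)\|_2\le 2(K_2N/m)^{1/2}$. Because each step of the RGA is a convex combination and $f\in A_1((\D^0)^\pm,(K_2N)^{1/2})$ (shown just before Theorem \ref{T4.2} from the reproducing formula (\ref{4.4}) and assumption \textbf{B}), the coefficients satisfy the crucial $\ell_1$ control $\sum_{k=1}^m |c_k|\le (K_2N)^{1/2}$.

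Next, for a parameter $\delta>0$ to be chosen, pick any $\delta$-net $\{\by^1,\dots,\by^M\}\subset\Omega$ in the $\|\cdot\|_\infty$ metric; compactness of $\Omega$ gives $M(\delta)\le (C(\Omega)/\delta)^d$. For each $k$, replace $\by(k)$ by its nearest net point $\by^{j(k)}$ and define $t_m(f):=\sum_{k=1}^m c_k g_{\by^{j(k)}}$, which lies in $\sp(\D^1)$ and uses at most $m$ atoms from $\D^1$. Assumption \textbf{A} applied to each $u_i$ yields the per-atom bound
$$
\|g_{\by(k)}-g_{\by^{j(k)}}\|_2^2 = (K_2N)^{-1}\sum_{i=1}^N |u_i(\by(k))-u_i(\by^{j(k)})|^2 \le K_2^{-1}K_1^2 N^{2\beta}\delta^{2\alpha}.
$$
The triangle inequality, combined with the $\ell_1$ bound on the $c_k$, then gives $\|G_m^r(f)-t_m(f)\|_2\le (K_2N)^{1/2}\cdot K_2^{-1/2}K_1N^\beta\delta^\alpha$.

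Finally, balance the two error contributions. Choosing $\delta=\delta_0$ with $\delta_0^\alpha=K_1^{-1}N^{-1/2-\beta}$ makes the discretization error at most $(K_2N/m)^{1/2}$ (in fact smaller, once one includes the $m^{-1/2}$ factor; here one simply notes it is bounded by this quantity for $m\le N$), so a final triangle inequality between $\|f-G_m^r(f)\|_2$ and $\|G_m^r(f)-t_m(f)\|_2$ delivers the claimed bound $3(K_2N/m)^{1/2}$. I expect the only delicate point is verifying that with this choice of $\delta_0$ the cardinality $M(\delta_0)$ remains polynomial in $N$ (which it does, thanks to the compactness bound (\ref{4.5}) and the power-of-$N$ form of $\delta_0$); this is what makes $\D^1$ a genuinely finite dictionary and justifies calling $t_m(f)$ an $m$-term approximant from $\D^1$.
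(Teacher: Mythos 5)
Your proposal reproduces the paper's argument essentially verbatim: apply Theorem \ref{T4.2} to obtain $G_m^r(f)=\sum c_k g_{\by(k)}$ with $\sum|c_k|\le(K_2N)^{1/2}$, snap each $\by(k)$ to a $\delta_0$-net point, control the per-atom shift via assumption \textbf{A}, and choose $\delta_0^\alpha=K_1^{-1}N^{-1/2-\beta}$ so the resulting error is at most $1\le(K_2N/m)^{1/2}$ for $m\le N$, giving $3(K_2N/m)^{1/2}$ by the triangle inequality. This is exactly the chain (\ref{4.6})--(\ref{4.10}) in the paper, including the observation that $M(\delta_0)$ is polynomial in $N$ so $\D^1$ is a genuine finite dictionary.
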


\subsection{Sparse approximation in $L_\infty$}

In this subsection we study $m$-term approximations of $f\in X^2_N$ in the $L_\infty$ norm with respect to the following dictionary
$$
\D^2 := \{\pm g_i\}_{i=1}^N,\quad g_i:= u_i K_2^{-1/2}.
$$
Then by property {\bf B} for all $p$ we have $\|g_i\|_p \le 1$. 

In this subsection we use greedy algorithms in Banach spaces. We remind some notations from the theory of greedy approximation in Banach spaces. The reader can find a systematic presentation of this theory in \cite{Tbook}, Chapter 6. 
Let $X$ be a Banach space with norm $\|\cdot\|$. We say that a set of elements (functions) ${\mathcal D}$ from $X$ is a dictionary   if each $g\in {\mathcal D}$ has norm less than or equal to one ($\|g\|\le 1$)
 and the closure of $\sp {\mathcal D}$ coincides with $X$. We note that in \cite{T8} we required in the definition of a dictionary normalization of its elements ($\|g\|=1$). However, it is pointed out in \cite{T12} that it is easy to check that the arguments from \cite{T8} work under assumption $\|g\|\le 1$ instead of $\|g\|=1$. In applications  it is more convenient for us to have an assumption $\|g\|\le 1$ than normalization of a dictionary.

For an element $f\in X$ we denote by $F_f$ a norming (peak) functional for $f$: 
$$
\|F_f\| =1,\qquad F_f(f) =\|f\|.
$$
The existence of such a functional is guaranteed by the Hahn-Banach theorem.

We proceed to  the Incremental Greedy Algorithm (see \cite{T12} and \cite{Tbook}, Chapter 6).       Let $\ep=\{\ep_n\}_{n=1}^\infty $, $\ep_n> 0$, $n=1,2,\dots$ . For a Banach space $X$ and a dictionary $\Di$ define the following algorithm IA($\ep$) $:=$ IA($\ep,X,\Di$).

 {\bf Incremental Algorithm with schedule $\ep$ (IA($\ep,X,\Di$)).} 
  Denote $f_0^{i,\ep}:= f$ and $G_0^{i,\ep} :=0$. Then, for each $m\ge 1$ we have the following inductive definition.

(1) $\ff_m^{i,\ep} \in \Di$ is any element satisfying
$$
F_{f_{m-1}^{i,\ep}}(\ff_m^{i,\ep}-f) \ge -\ep_m.
$$

(2) Define
$$
G_m^{i,\ep}:= (1-1/m)G_{m-1}^{i,\ep} +\ff_m^{i,\ep}/m.
$$

(3) Let
$$
f_m^{i,\ep} := f- G_m^{i,\ep}.
$$
We consider here approximation in uniformly smooth Banach spaces. For a Banach space $X$ we define the modulus of smoothness
$$
\rho(u) := \sup_{\|x\|=\|y\|=1}(\frac{1}{2}(\|x+uy\|+\|x-uy\|)-1).
$$
The uniformly smooth Banach space is the one with the property
$$
\lim_{u\to 0}\rho(u)/u =0.
$$

It is well known (see for instance \cite{DGDS}, Lemma B.1) that in the case $X=L_p$, 
$1\le p < \infty$ we have
\begin{equation}\label{t6.2}
\rho(u) \le \begin{cases} u^p/p & \text{if}\quad 1\le p\le 2 ,\\
(p-1)u^2/2 & \text{if}\quad 2\le p<\infty. \end{cases}     
\end{equation}
 
 Denote by $A_1({\mathcal D}):=A_1(\Di)(X)$ the closure in $X$ of the convex hull of ${\mathcal D}$.
 In order to be able to run the IA($\ep$) for all iterations we need existence of an element $\ff_m^{i,\ep} \in \Di$ at the step (1) of the algorithm for all $m$. It is clear that the following condition guarantees such existence (see \cite{VT149}).

{\bf Condition B.} We say that for a given dictionary $\Di$ an element $f$ satisfies Condition B if for all $F\in X^*$ we have
$$
F(f) \le \sup_{g\in\Di}F(g).
$$

It is well known (see, for instance, \cite{Tbook}, p.343) that any $f\in A_1(\Di)$ satisfies Condition B. For completeness we give this simple argument here. 
Take any $f\in A_1(\Di)$. Then for any $\ep >0$ there exist $g_1^\ep,\dots,g_N^\ep \in \Di$ and numbers $a_1^\ep,\dots,a_N^\ep$ such that $a_i^\ep>0$, $a_1^\ep+\dots+a_N^\ep = 1$ and 
$$
\|f-\sum_{i=1}^Na_i^\ep g_i^\ep\| \le \ep.
$$
Thus
$$
F(f) \le \|F\|\ep + F(\sum_{i=1}^Na_i^\ep g_i^\ep) \le \ep \|F\| +\sup_{g\in \Di} F(g)
$$
which proves Condition B.
 
 We note that Condition B is equivalent to the property $f\in A_1(\Di)$. Indeed, as we showed above, the property $f\in A_1(\Di)$ implies Condition B. Let us show that Condition B implies that $f\in A_1(\Di)$. Assuming the contrary $f\notin A_1(\Di)$ by the separation theorem for convex bodies we find $F\in X^*$ such that 
 $$
 F(f) > \sup_{\phi\in A_1(\Di)} F(\phi) \ge \sup_{g\in\Di}F(g)
 $$
 which contradicts Condition B.
 
 We formulate results on the IA($\ep$) in terms of Condition B because in the applications it is easy to check Condition B.
 
\begin{Theorem}\label{T4.4} Let $X$ be a uniformly smooth Banach space with  modulus of smoothness $\rho(u)\le \gamma u^q$, $1<q\le 2$. Define
$$
\ep_n := \bt\gamma ^{1/q}n^{-1/p},\qquad p=\frac{q}{q-1},\quad n=1,2,\dots .
$$
Then, for every $f$ satisfying Condition B we have
$$
\|f_m^{i,\ep}\| \le C(\bt) \gamma^{1/q}m^{-1/p},\qquad m=1,2\dots.
$$
\end{Theorem}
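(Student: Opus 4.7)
The plan is to mimic the standard argument for the Incremental Algorithm in a uniformly smooth Banach space: derive a one-step recurrence for $\|f_m^{i,\ep}\|$ from the definition of the modulus of smoothness, and then close the recurrence by induction with the ansatz $\|f_m^{i,\ep}\|\le D\gamma^{1/q}m^{-1/p}$ for a suitable $D=D(\bt)$.

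First I would verify that the algorithm can indeed be run for all $m$. Since $f$ satisfies Condition B and $f_{m-1}^{i,\ep}\neq 0$ (else we are done), applying Condition B to the norming functional $F_{f_{m-1}^{i,\ep}}$ yields an element $\ff_m^{i,\ep}\in\Di$ with $F_{f_{m-1}^{i,\ep}}(\ff_m^{i,\ep})\ge F_{f_{m-1}^{i,\ep}}(f)-\ep_m$, which is exactly the selection rule in step (1). Recall also that Condition B is equivalent to $f\in A_1(\Di)$, so in particular $\|f\|\le 1$, and $\|\ff_m^{i,\ep}\|\le 1$ because $\ff_m^{i,\ep}\in\Di$.

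Next, I would write out the recurrence. Using step (2) of the algorithm,
\[
f_m^{i,\ep} \;=\; \Bigl(1-\tfrac1m\Bigr)f_{m-1}^{i,\ep} \;+\; \tfrac1m(f-\ff_m^{i,\ep}).
\]
The standard identity (valid in any Banach space and derived from the definition of $\rho(u)$) states that for $x\neq 0$ and any $y$,
\[
\|x+y\|\;\le\;\|x\|+F_x(y)+2\|x\|\rho(\|y\|/\|x\|).
\]
Applying this with $x=(1-1/m)f_{m-1}^{i,\ep}$ and $y=(f-\ff_m^{i,\ep})/m$, using $F_x=F_{f_{m-1}^{i,\ep}}$, the selection rule $F_{f_{m-1}^{i,\ep}}(f-\ff_m^{i,\ep})\le\ep_m$, the bound $\|f-\ff_m^{i,\ep}\|\le 2$, and the hypothesis $\rho(u)\le\gamma u^q$, gives
\[
\|f_m^{i,\ep}\| \;\le\; \Bigl(1-\tfrac1m\Bigr)\|f_{m-1}^{i,\ep}\| + \frac{\ep_m}{m} + \frac{C\gamma}{m^q\,\|f_{m-1}^{i,\ep}\|^{q-1}}
\]
for an absolute constant $C$ (absorbing factors of $2$ and $(1-1/m)^{1-q}\le 2^{q-1}$ for $m\ge 2$).

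Finally, I would run an induction on $m$ with the ansatz $a_m:=\|f_m^{i,\ep}\|\le D\gamma^{1/q}m^{-1/p}$. The base case $m=1$ is immediate from $\|f-\ff_1^{i,\ep}\|\le 2$ provided $D\ge 2$. For the inductive step, substituting $a_{m-1}\le D\gamma^{1/q}(m-1)^{-1/p}$ and the definition $\ep_m=\bt\gamma^{1/q}m^{-1/p}$ into the recurrence reduces everything to a purely numerical inequality in $m$; using $1/p+1/q=1$ one checks, in particular, that the nonlinear term scales like $\gamma^{1/q}D^{1-q}m^{-1/p}$, so $D$ can be chosen large in terms of $\bt$ (taking $D\gg \bt+D^{1-q}$) to close the induction. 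I expect the routine-but-delicate step to be exactly this verification of the inductive inequality, where one has to balance the linear contraction factor $(1-1/m)$, the drift $\ep_m/m$, and the nonlinear error term; apart from that, the argument is a direct adaptation of the proof of Theorem 6.24 in \cite{Tbook} for Condition B in place of $f\in A_1(\Di)$ (which, as observed, are equivalent).
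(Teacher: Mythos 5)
Your proposal is correct and matches the paper's approach: the paper itself disposes of Theorem \ref{T4.4} in two sentences by citing \cite{T12} (and \cite{Tbook}, Ch.~6) for the case $f\in A_1(\Di)$ and invoking the already-established equivalence of Condition~B with $f\in A_1(\Di)$, which is exactly the reduction you make at the end; the recurrence-plus-induction argument you sketch is the standard proof of that cited result. One small caution if you actually carry out the induction: the nonlinear term $C\gamma m^{-q}\|f_{m-1}^{i,\ep}\|^{1-q}$ degenerates when $\|f_{m-1}^{i,\ep}\|$ is small, so you must also invoke the trivial bound $\rho(u)\le u$ (giving a contribution $\le 2\|y\|\le 4/m$) in that regime and use the fact that $\gamma$ is bounded below by an absolute constant (since $\rho(u)\ge\sqrt{1+u^2}-1$) to keep the resulting constant depending only on $\bt$.
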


In the case $f\in A_1(\Di)$ this theorem is proved in \cite{T12} (see also \cite{Tbook}, Chapter 6). As we mentioned above Condition B is equivalent to $f\in A_1(\Di)$. 

For $f\in X_N$ write $f=\sum_{i=1}^N c_ig_i$ and define
$$
\|f\|_A := \sum_{i=1}^N |c_i|.
$$

 \begin{Theorem}\label{T4.5} Assume that $X_N$ satisfies {\bf C}. For any $t\in X_N$ the IA($\ep,X_N\cap L_p,\D^2$) with an appropriate $p$ and schedule $\ep$,  applied to $f:=t/\|t\|_A$, provides after $m$ iterations an $m$-term polynomial $G_m(t):=G^{i,\ep}_m(f)\|t\|_A$ with the following approximation property
$$
\|t-G_m(t)\|_\infty \le Cm^{-1/2}(\ln N)^{1/2}\|t\|_A, \quad \|G_m(t)\|_A =\|t\|_A, 
$$
with a constant $C=C(K_3,K_4)$.
\end{Theorem}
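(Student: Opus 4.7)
The plan is to apply Theorem \ref{T4.4} in $L_p$ for an appropriately chosen exponent $p\in[2,\infty)$, and then convert the $L_p$ error bound into an $L_\infty$ bound by means of the Nikol'skii-type inequality from assumption \textbf{C}.

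First, I would reduce to the normalized case by setting $f:=t/\|t\|_A$. Writing $t=\sum_{i=1}^N c_i g_i$ and decomposing each summand as $c_i g_i=|c_i|(\mathrm{sign}(c_i)g_i)$ with $\mathrm{sign}(c_i)g_i\in\D^2$, one sees that $f$ is an explicit convex combination of elements of $\D^2$. Hence $f\in A_1(\D^2)$, so $f$ satisfies Condition B and the IA is well-defined in the ambient space $X_N\cap L_p$; note also that $\D^2\subset X_N\cap L_p$ with $\|\pm g_i\|_p\le\|g_i\|_\infty\le 1$ by assumption \textbf{B}, so $\D^2$ is indeed a legitimate dictionary.

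Second, I would view $X_N$ as a subspace of $L_p(\Omega,\mu)$ with $p\ge 2$. By (\ref{t6.2}), $L_p$ is uniformly smooth with $\rho(u)\le\gamma u^2$ for $\gamma=(p-1)/2$, so Theorem \ref{T4.4} with $q=2$ gives, after $m$ steps,
$$
\|f-G^{i,\epsilon}_m(f)\|_p\le C(\beta)\left(\frac{p-1}{2}\right)^{1/2}m^{-1/2}.
$$
Since $G^{i,\epsilon}_m(f)\in X_N$, the residual also lies in $X_N$, and assumption \textbf{C} yields
$$
\|f-G^{i,\epsilon}_m(f)\|_\infty\le K_3 N^{K_4/p}\,\|f-G^{i,\epsilon}_m(f)\|_p.
$$
Choosing $p=\max(2,\ln N)$ makes $N^{K_4/p}\le e^{K_4}$ and $(p-1)^{1/2}\le(\ln N)^{1/2}$ (the finitely many small $N$ being absorbed in the constant). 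Multiplying through by $\|t\|_A$ then produces the stated inequality with $C=C(K_3,K_4)$.

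Finally, the auxiliary bound $\|G_m(t)\|_A\le\|t\|_A$ (which is the substantive content of the equation $\|G_m(t)\|_A=\|t\|_A$) follows immediately from the averaged form $G^{i,\epsilon}_m(f)=m^{-1}\sum_{k=1}^m\phi^{i,\epsilon}_k$ with each $\phi^{i,\epsilon}_k\in\D^2=\{\pm g_i\}$: expanding in the basis $\{g_i\}$ gives coefficients whose absolute values sum to at most $1$. The only genuine obstacle is the classical trick $p=\ln N$: the square-root dependence on $p$ built into Theorem \ref{T4.4} is precisely what makes this balance succeed, and it is the source of the extra $(\ln N)^{1/2}$ factor compared with the pure $L_2$ greedy rate.
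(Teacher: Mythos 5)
Your proposal matches the paper's proof essentially step for step: reduce to $\|t\|_A=1$ so $f\in A_1(\D^2)$ (Condition B), apply Theorem~\ref{T4.4} in $L_p$ with $q=2$ and $\gamma\le p/2$ from (\ref{t6.2}), convert to $L_\infty$ via the Nikol'skii inequality of assumption \textbf{C}, and optimize at $p\asymp\ln N$. The only addition is your correct observation that the IA's averaged form yields $\|G_m(t)\|_A\le\|t\|_A$ rather than equality in the stated basis $\{g_i\}$, since selections of $g_i$ and $-g_i$ could in principle cancel; this is a harmless imprecision in the theorem's statement, not in your or the paper's argument.
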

\begin{proof}  It is clear that it is sufficient to prove Theorem \ref{T4.5} for $t\in X_N$ with $\|t\|_A =1$. Then $t\in A_1(\D^2)(X_N\cap L_p)$ for all $p\in [2,\infty)$.  
Applying the IA($\ep$) to $f$ with respect to $\Di^2$ we obtain by Theorem \ref{T4.4} after $m$ iterations 
 \begin{equation}\label{4.12}
\| t -\sum_{j\in \La} \frac{a_j}{m}g_j\|_p \le C\gamma^{1/2}m^{-1/2},\qquad \sum_{j\in\La}a_j = m,
\end{equation}
where $\sum_{j\in \La} \frac{a_j}{m}g_j$ is the $G^{i,\ep}_m(t)$. 
By (\ref{t6.2}) we find $\gamma \le p/2$. Next, by the Nikol'skii inequality from assumption {\bf C} we get from (\ref{4.12})
$$
\| t -\sum_{j\in \La} \frac{a_j}{m}g_j\|_\infty \le CN^{K_4/p}\| t -\sum_{j\in \La} \frac{a_j}{m}g_j\|_p \le Cp^{1/2}N^{K_4/p}m^{-1/2}.
$$
Choosing $p\asymp \ln N$ we obtain the desired in Theorem \ref{T4.5} bound.

\end{proof}

Using the following simple relations
$$
\|f\|_2^2 = \|\sum_{i=1}^N c_ig_i\|_2^2 = \|K_2^{-1/2}\sum_{i=1}^N c_iu_i\|_2^2 = 
K_2^{-1}\sum_{i=1}^N |c_i|^2,
$$
$$
\sum_{i=1}^N |c_i| \le N^{1/2} \left(\sum_{i=1}^N |c_i|^2\right)^{1/2} = (K_2N)^{1/2}\|f\|_2
$$
we obtain from Theorem \ref{T4.5} the following estimates.

\begin{Theorem}\label{T4.6} We have
$$
\sigma_m(X^2_N,\D^2)_\infty \ll (N/m)^{1/2} (\ln N)^{1/2}.
$$
\end{Theorem}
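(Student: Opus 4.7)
The plan is to deduce Theorem \ref{T4.6} directly from Theorem \ref{T4.5} together with the two algebraic norm identities stated immediately before the theorem. Those identities give the comparison $\|f\|_A \le (K_2 N)^{1/2}\|f\|_2$, which converts the $\|t\|_A$-scaled bound of Theorem \ref{T4.5} into an $L_2$-scaled one, and this is exactly what is needed.

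More precisely, first I would take an arbitrary $f\in X^2_N$ and expand it uniquely as $f=\sum_{i=1}^N c_i g_i$ (recall $g_i = K_2^{-1/2} u_i$). Using orthonormality of the $\{u_i\}$,
$$\|f\|_2^2 = K_2^{-1}\sum_{i=1}^N |c_i|^2,$$
and then Cauchy--Schwarz gives
$$\|f\|_A = \sum_{i=1}^N |c_i| \le N^{1/2}\Bigl(\sum_{i=1}^N |c_i|^2\Bigr)^{1/2} = (K_2 N)^{1/2}\|f\|_2 \le (K_2 N)^{1/2}.$$

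Second, I would apply Theorem \ref{T4.5} to $t = f$: it produces an $m$-term polynomial $G_m(f)$ (with respect to $\D^2$) satisfying
$$\|f - G_m(f)\|_\infty \le C m^{-1/2}(\ln N)^{1/2}\|f\|_A \le C(K_2)^{1/2}(N/m)^{1/2}(\ln N)^{1/2}.$$
Taking the supremum over $f\in X^2_N$ yields the claimed bound on $\sigma_m(X^2_N,\D^2)_\infty$.

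There is really no serious obstacle here; all the work is already contained in Theorem \ref{T4.5}, whose proof combined the Incremental Algorithm in $L_p$ with the Nikol'skii inequality from assumption \textbf{C} and the optimized choice $p\asymp \ln N$. The step left for Theorem \ref{T4.6} is purely the passage from the $\|\cdot\|_A$-normalization to the $L_2$-normalization, and this passage costs exactly the factor $N^{1/2}$ (losing a $K_2^{1/2}$ constant) via Cauchy--Schwarz applied to the orthonormal expansion. So the proof is a one-line application of Theorem \ref{T4.5} after the norm comparison is in place.
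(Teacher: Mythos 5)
Your proof is correct and follows exactly the paper's route: the paper states the same two norm identities (the $L_2$-orthonormality relation and the Cauchy--Schwarz bound $\|f\|_A \le (K_2 N)^{1/2}\|f\|_2$) and then reads off Theorem \ref{T4.6} directly from Theorem \ref{T4.5}. There is nothing to add.
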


Combining Theorems \ref{T4.3} and \ref{T4.6} we obtain.

\begin{Theorem}\label{T4.7} We have
$$
\sigma_m(X^1_N,\D^1\cup\D^2)_\infty \ll (N/m) (\ln N)^{1/2}.
$$
\end{Theorem}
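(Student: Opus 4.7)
The plan is to combine Theorems \ref{T4.3} and \ref{T4.6} by a two-stage (concatenation) argument, exactly parallel to the proof of Lemma \ref{L2.1} for the trigonometric case. Take $f \in X^1_N$. First I would use Theorem \ref{T4.3} with $\lfloor m/2 \rfloor$ terms to produce an approximant
$$
t^{(1)} := \sum_{j \in \Lambda_1} c_j g_{\by^j} \in \sp(\D^1), \qquad |\Lambda_1| \le m/2,
$$
satisfying $\|f - t^{(1)}\|_2 \ll (N/m)^{1/2}$. The crucial observation is that each $g_{\by^j}$ lies in $X_N$ (as a function of $\bx$, $\D_N(\bx,\by^j) = \sum_i u_i(\by^j)u_i(\bx)$ is a linear combination of the $u_i$), so $t^{(1)} \in X_N$ and hence $h := f - t^{(1)} \in X_N$ with $\|h\|_2 \ll (N/m)^{1/2}$.

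Next I would apply Theorem \ref{T4.6} to $h/\|h\|_2 \in X^2_N$ with $\lfloor m/2 \rfloor$ terms from $\D^2$ to produce
$$
t^{(2)} := \sum_{j \in \Lambda_2} d_j g_j \in \sp(\D^2), \qquad |\Lambda_2| \le m/2,
$$
with
$$
\|h - t^{(2)}\|_\infty \ll \|h\|_2 (N/m)^{1/2}(\ln N)^{1/2} \ll (N/m)(\ln N)^{1/2}.
$$
Setting $t := t^{(1)} + t^{(2)} \in \sp(\D^1 \cup \D^2)$, we have $|\Lambda_1| + |\Lambda_2| \le m$ terms, and
$$
\|f - t\|_\infty = \|h - t^{(2)}\|_\infty \ll (N/m)(\ln N)^{1/2},
$$
which gives the claimed bound on $\sigma_m(X^1_N,\D^1 \cup \D^2)_\infty$.

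There is no real obstacle once the $L_2$ error from the first stage is inserted as the input scale for the second stage; the only subtle point is to note explicitly that the residual $h$ still lies in $X_N$ so that condition \textbf{C} (used inside Theorem \ref{T4.6}) applies. The logarithmic factor comes entirely from the second stage via the Nikol'skii-type inequality \eqref{4.3i}, optimized at $p \asymp \ln N$, while the linear $(N/m)$ factor arises as the product of the two $(N/m)^{1/2}$ factors from the $L_2$ approximation in stage one and from Theorem \ref{T4.6} in stage two. Trivial adjustments of constants handle the use of $\lfloor m/2 \rfloor$ versus $m$ in both invocations.
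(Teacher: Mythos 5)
Your proof is correct and fills in exactly the two-stage concatenation that the paper intends by ``Combining Theorems \ref{T4.3} and \ref{T4.6}'', parallel to the proof of Lemma \ref{L2.1}. The key observation you make explicit---that each $g_{\by^j}$ lies in $X_N$, so the residual $h$ remains in $X_N$ and can be fed (after $L_2$-normalization) into Theorem \ref{T4.6}---is indeed the point that makes the combination work.
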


\subsection{The entropy numbers}

By our construction (see (\ref{4.10})) we obtain
$$
|\D^1\cup\D^2| \ll N^{c(\alpha,\beta,d)}.
$$

Theorem \ref{T4.7}, Theorem \ref{T2.1}, and Remark \ref{R2.1} (its version for the real case) imply the following 
result on the entropy numbers.

\begin{Theorem}\label{T4.8} Suppose that a real orthonormal system $\{u_i\}_{i=1}^N$ satisfies conditions {\bf A}, {\bf B}, and {\bf C}. 
 Then we have
$$
\e_k(X^1_N,L_\infty) \ll  \left\{\begin{array}{ll} (\log N)^{3/2}(N/k), &\quad k\le N,\\
(\log N)^{3/2}2^{-k/N},&\quad k\ge N.\end{array} \right.
$$
\end{Theorem}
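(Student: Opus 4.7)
The plan is to apply Theorem \ref{T2.1} and the real-case version of Remark \ref{R2.1} to the dictionary $\D := \D^1\cup \D^2$, feeding in the $m$-term approximation bound from Theorem \ref{T4.7}. Two preparatory observations: first, by the construction of $\D^1$ (see (\ref{4.10})) the cardinality satisfies $|\D| \le C(K_1,\Omega,d)N^{c(\alpha,\beta,d)}$, so $\log|\D| \ll \log N$; second, Theorem \ref{T4.7} furnishes a constant $C_0$ with $\sigma_m(X^1_N,\D)_\infty \le C_0 N(\log N)^{1/2}/m$ for all $m\ge 1$.

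First I would rescale in order to match the hypothesis of Theorem \ref{T2.1}: set $\tilde F := (C_0 N(\log N)^{1/2})^{-1} X^1_N$, so that $\sigma_m(\tilde F,\D)_\infty \le m^{-1}$ for all $m \le |\D|$. Theorem \ref{T2.1} with $r=1$ (and dictionary size $|\D|$ in place of the $N$ appearing there) then yields, for $k \le |\D|$,
$$
\e_k(\tilde F, L_\infty) \le C \frac{\log(2|\D|/k)}{k} \ll \frac{\log N}{k}.
$$
Unscaling, $\e_k(X^1_N,L_\infty) \ll (\log N)^{3/2}N/k$, which in particular establishes the first branch of the claimed bound for all $k \le N$.

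For the range $k \ge N$, I would exploit that $X^1_N$ lies in the $N$-dimensional subspace $X_N$. The preceding step gives $\e_N(X^1_N,L_\infty) \ll (\log N)^{3/2}$, i.e.\ $X^1_N$ is covered by $2^N$ $L_\infty$-balls of radius $a \ll (\log N)^{3/2}$. Each piece is a subset of $X_N$ with $L_\infty$-diameter $\le 2a$, and a standard volumetric argument in an $N$-dimensional normed space (which is precisely the content of the real-case analog of Remark \ref{R2.1}) refines each such piece into $2^j$ balls of radius $\ll a\cdot 2^{-j/N}$. Setting $j = k-N$ yields $\e_k(X^1_N,L_\infty) \ll (\log N)^{3/2} 2^{-k/N}$ for $k \ge N$, which is the second branch.

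The main obstacle is largely bookkeeping rather than new ideas: making sure the rescaling correctly converts Theorem \ref{T4.7} into the normalized $\sigma_m\le m^{-r}$ form required by Theorem \ref{T2.1}, and verifying that the polynomial-in-$N$ bound on $|\D|$ produces only a single extra $\log N$ factor (so that the final answer is $(\log N)^{3/2}$ rather than a larger power). No new property of the system $\{u_i\}$ enters here beyond what is already encoded in Theorem \ref{T4.7}; the statement is a direct corollary of that $m$-term approximation bound via the entropy-from-sparsity machinery of Theorem \ref{T2.1} and Remark \ref{R2.1}.
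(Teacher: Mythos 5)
Your proof is correct and takes the same route the paper does; the paper's own proof of this theorem is literally the one-line citation of Theorem \ref{T4.7}, Theorem \ref{T2.1}, and Remark \ref{R2.1}. What you add is the bookkeeping the citation hides. The rescaling to $\tilde F:=(C_0N(\log N)^{1/2})^{-1}X^1_N$ so that $\sigma_m(\tilde F,\D)_\infty\le m^{-1}$, and the observation that $\log(2|\D|/k)\ll\log N$ because $|\D|\ll N^{c(\alpha,\beta,d)}$, is exactly how one converts Theorem \ref{T4.7} into the hypothesis of Theorem \ref{T2.1} and then unscales to obtain the $k\le N$ branch. For $k\ge N$ you are in fact being more careful than a verbatim appeal to Remark \ref{R2.1} would permit: that remark is stated for the case $|\D|=\dim\sp(\D)$, whereas here $|\D|\asymp N^{c(\alpha,\beta,d)}$ far exceeds $\dim\sp(\D^1\cup\D^2)=N$. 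Your fix — take the $k\le N$ bound at $k=N$ to cover $X^1_N$ by $2^N$ $L_\infty$-balls of radius $a\ll(\log N)^{3/2}$, then refine each piece by the volumetric bound $\e_j(B)\ll 2^{-j/N}$ inside the $N$-dimensional real space $X_N$, and set $j=k-N$ — is precisely the ``real-case version'' of the remark that the paper intends, and it reproduces the stated $(\log N)^{3/2}2^{-k/N}$. No gap.
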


In the same way as Theorem \ref{T3.3} was derived from Theorem \ref{T2.6} the following Theorem \ref{T4.9} can be derived from Theorem \ref{T4.8}

\begin{Theorem}\label{T4.9} Suppose that a real orthonormal system $\{u_i\}_{i=1}^N$ satisfies conditions {\bf A}, {\bf B}, and {\bf C}. Then there exists a set of $m \le C_1N(\log N)^{7/2}$ points $\xi^j\in \Omega$, $j=1,\dots,m$, $C_1=C(d,K_1,K_2,K_3,K_4,\Omega,\alpha,\beta)$, such that for any $f\in X_N$ 
we have
$$
C_2\|f\|_1 \le \frac{1}{m}\sum_{j=1}^m |f(\xi^j)| \le C_3\|f\|_1.
$$
\end{Theorem}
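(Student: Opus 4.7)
\medskip
\noindent\textbf{Proof plan.}
The plan is to repeat the chaining proof of Theorem \ref{T3.3} essentially verbatim, under the dictionary: $X_N$ replaces $\Tr(Q)$, $N$ replaces $|Q|$, $\log N$ replaces $n$, and Theorem \ref{T4.8} replaces Theorem \ref{T2.6'}. Proposition \ref{P3.1}, Proposition \ref{P3.2}, and Lemma \ref{L3.1} are formulated entirely in terms of the functional $L^1_{\bz}$ and so carry over without change.

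First I would reduce to a probabilistic statement. Setting $W:=\{f\in X_N:\|f\|_1=1/2\}$, by homogeneity it suffices to prove
$$
\mu^m\Bigl\{\bz\in\Omega^m:\sup_{f\in W}|L^1_{\bz}(f)|\ge 1/4\Bigr\}<1
$$
for $m=\lceil C_1 N(\log N)^{7/2}\rceil$ with $C_1$ large; this yields the claim with $C_2=1/2$, $C_3=3/2$. Next I would build the chain: let $\delta_j$ be (twice) the upper bound for $\e_{2^j}(X_N^1,L_\infty)$ supplied by Theorem \ref{T4.8}, choose minimal $\delta_j$-nets $\mathcal{N}_j\subset W$ with $N_j:=|\mathcal{N}_j|\le 2^{2^j}$, and let $J$ be the smallest $j$ with $\delta_J\le 1/16$. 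The exponential regime of Theorem \ref{T4.8} gives $J\le 2d\log N$ for large $N$, exactly as in (\ref{3.6}). For each $f\in W$ form the chain $f_J,\dots,f_1$ by successive nearest-point projection (so $\|f_{j+1}-f_j\|_\infty\le\delta_j$), set $\eta_j:=1/(16J)$, and use Proposition \ref{P3.2} together with the telescoping decomposition of $L^1_\bz(f_J)$ to obtain the union bound analogous to (\ref{3.6'}).

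I would then estimate each summand. For $j\ge 2$, Proposition \ref{P3.1} yields a bound of the form $2\exp\bigl(-m\eta_j^2/(16\delta_{j-1})\bigr)$. Substituting the two regimes of Theorem \ref{T4.8} in $\delta_{j-1}$ (polynomial $\delta_j\ll(\log N)^{3/2}N/2^j$ for $2^j\le 2N$, exponential $\delta_j\ll(\log N)^{3/2}2^{-2^j/(2N)}$ for $2^j\ge 2N$) together with $m=C_1N(\log N)^{7/2}$ and $1/\eta_j^2\asymp(\log N)^2$ gives exponent at least $c_0 C_1\cdot 2^j$, which dominates the entropy factor $N_j\le 2^{2^j}$ for $C_1$ large and leaves a geometrically convergent tail, precisely in analogy with (\ref{3.7})--(\ref{3.8}). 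For the base level $j=1$, I would apply Lemma \ref{L3.1} to $g_\nu(\bz)=|f(\xi^\nu)|-\|f\|_1$, bounding $\|g_\nu\|_\infty$ by the Nikol'skii-type inequality $\|f\|_\infty\le K_3^2 N^{K_4}\|f\|_1$ on $X_N$; this inequality follows from hypothesis \textbf{C} at $p=2$ combined with the elementary interpolation $\|f\|_2^2\le\|f\|_\infty\|f\|_1$, and is the $X_N$-analogue of (\ref{3.9}). Since $N_1$ is an absolute constant, this term is also at most $1/4$ for $C_1$ large, so the three contributions add to strictly less than $1$ and produce the desired deterministic configuration $\bz_0=(\xi^1,\dots,\xi^m)$.

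The main obstacle is the bookkeeping that produces exactly $(\log N)^{7/2}$: this has to arise as the product $(\log N)^{3/2}\cdot(\log N)^2$, the first factor being the entropy loss in Theorem \ref{T4.8} and the second factor being $1/\eta_j^2\asymp(\log N)^2$ driven by the bound $J\lesssim\log N$. One must also verify that all the structural constants $K_1,K_2,K_3,K_4,\alpha,\beta,d,\Omega$ can be collected into the single prefactor $C_1$; this works because Theorem \ref{T4.8} depends on them only through its multiplicative constant, and the base-level Nikol'skii estimate depends on them only through $K_3$ and $K_4$, so choosing $C_1$ sufficiently large absorbs all of them.
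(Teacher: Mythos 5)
Your proposal follows exactly the route the paper intends: the paper's own ``proof'' of Theorem \ref{T4.9} is just the one-line pointer ``derive it from Theorem \ref{T4.8} the same way Theorem \ref{T3.3} was derived from Theorem \ref{T2.6},'' and your chaining skeleton, the bookkeeping $J\lesssim\log N$, $\eta_j\asymp 1/\log N$, and the two-regime estimate of $m\eta_j^2/\delta_{j-1}\gtrsim C_1 2^j$ are all the right translation of that argument with $|Q|\mapsto N$, $n\mapsto\log N$.

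There is, however, one concrete slip at the base level. You bound $\|g_\nu\|_\infty$ via the Nikol'skii inequality $\|f\|_\infty\le K_3^2 N^{K_4}\|f\|_1$, which you obtain from condition \textbf{C} at $p=2$ together with $\|f\|_2^2\le\|f\|_\infty\|f\|_1$. This produces $M\lesssim N^{K_4}$, and then the base-level exponent becomes
$$
\frac{m\eta_1^2}{8M}\asymp \frac{C_1 N(\log N)^{3/2}}{N^{K_4}}=C_1 N^{1-K_4}(\log N)^{3/2},
$$
which tends to $0$ as $N\to\infty$ whenever $K_4>1$, so no choice of $C_1$ independent of $N$ can make $N_1\exp(-m\eta_1^2/(8M))\le 1/4$. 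The theorem places no upper bound on $K_4$, so as written this is a genuine gap. The fix is to derive the $L_\infty$-to-$L_1$ inequality from condition \textbf{B} instead of \textbf{C}: since the $u_i$ are orthonormal and $\|u_i\|_\infty^2\le K_2$, for $f=\sum_i c_iu_i$ one has $\|f\|_\infty\le \sum_i |c_i|\,\|u_i\|_\infty\le (K_2 N)^{1/2}\|f\|_2$, and combining with $\|f\|_2^2\le\|f\|_\infty\|f\|_1$ gives $\|f\|_\infty\le K_2 N\|f\|_1$. This is the true $X_N$-analogue of (\ref{3.9}): it is linear in $N$ just as $\|f\|_\infty\le|Q|\,\|f\|_1$ is, gives $M\lesssim K_2 N$, and makes $m\eta_1^2/(8M)\asymp C_1(\log N)^{3/2}\to\infty$ regardless of $K_4$. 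Once you substitute this, the rest of your argument closes exactly as in the proof of Theorem \ref{T3.3}.
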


The following analog of Theorem \ref{T3.3e} holds for general systems.

\begin{Theorem}\label{T4.9e} Suppose that a real orthonormal system $\{u_i\}_{i=1}^N$ satisfies conditions {\bf A}, {\bf B}, and {\bf C}. Then for $\epsilon \in [2^{-N},1/2]$ there exists a set of \newline $m \le C_1N(\log N)^{7/2}\epsilon^{-2}$ points $\xi^j\in \Omega$, $j=1,\dots,m$, \newline$C_1=C(d,K_1,K_2,K_3,K_4,\Omega,\alpha,\beta)$, such that for any $f\in X_N$ 
we have
$$
(1-\epsilon)\|f\|_1 \le \frac{1}{m}\sum_{j=1}^m |f(\xi^j)| \le (1+\epsilon)\|f\|_1.
$$
\end{Theorem}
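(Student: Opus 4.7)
The plan is to rerun the chaining proof of Theorem \ref{T4.9} with the same $\epsilon$-adaptation that passed from Theorem \ref{T3.3} to Theorem \ref{T3.3e}: the fixed target error $1/4$ is replaced by $\epsilon/2$, the chain thresholds $\eta_j$ are shrunk by a factor of $\epsilon$, and $m$ is inflated by $\epsilon^{-2}$. The entropy input is Theorem \ref{T4.8}, with $n$ from the trigonometric case replaced by $\log N$ and $|Q|$ by $N$.

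Set $W:=\{f\in X_N:\|f\|_1=1/2\}$, let $\delta_j$ be a constant multiple of $\e_{2^j}(X^1_N,L_\infty)$, and pick minimal $\delta_j$-nets $\mathcal N_j\subset W$ in $C(\Omega)$ with $N_j:=|\mathcal N_j|\le 2^{2^j}$. Let $J=J(\epsilon)$ be the minimal $j$ with $\delta_J\le\epsilon/8$. The second regime of Theorem \ref{T4.8} combined with the hypothesis $\epsilon\ge 2^{-N}$ yields $2^J\lesssim N(\log N+\log(1/\epsilon))\le N^2$, hence $J\lesssim\log N$ for $N\ge C(d)$. Set
\[
\eta_j:=\frac{\epsilon}{c\log N},\qquad j=1,\dots,J,
\]
with a constant $c=c(d,K_1,\dots,\alpha,\beta)$ large enough that $J\eta_j\le\epsilon/4$. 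Build the chain $f_J:=A_J(f),\ f_j:=A_j(f_{j+1})$ as in the proof of Theorem \ref{T3.3}; Proposition \ref{P3.2} together with $\delta_J\le\epsilon/8$ forces $|L^1_\bz(f_J)|\ge\epsilon/4$ whenever $|L^1_\bz(f)|\ge\epsilon/2$, and the telescoping identity then forces some link $|L^1_\bz(f_j)-L^1_\bz(f_{j-1})|\ge\eta_j$ or $|L^1_\bz(f_1)|\ge\eta_1$.

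Apply the union bound as in (\ref{3.6'}) together with Proposition \ref{P3.1} to reduce everything to summing $N_j\cdot 2\exp(-m\eta_j^2/(16\delta_{j-1}))$. With $m=C_1N(\log N)^{7/2}\epsilon^{-2}$ one gets $m\eta_j^2\asymp(C_1/c^2)N(\log N)^{3/2}$; in the regime $2^j\le N$ the bound $\delta_{j-1}\asymp(\log N)^{3/2}N/2^{j-1}$ yields $m\eta_j^2/\delta_{j-1}\gtrsim(C_1/c^2)2^j$, while in the regime $2^j\ge N$ the bound $\delta_{j-1}\lesssim(\log N)^{3/2}2^{-2^{j-1}/(2N)}$ combined with $2^{k/2}\ge k$ for $k\ge 2$ yields the same lower bound. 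Taking $C_1$ sufficiently large makes each summand at most $\exp(-2^j)$, so the tail sum is $\le 1/4$. For the first level, use Lemma \ref{L3.1} with $g_j(\bz)=|f(\bx^j)|-\|f\|_1$ and the Bessel-type bound that follows from condition $\mathbf B$ alone,
\[
\|f\|_\infty\le K_2^{1/2}N^{1/2}\|f\|_2\le K_2N\|f\|_1,\qquad f\in X_N,
\]
so $\|g_j\|_\infty\le K_2N/2$ and $m\eta_1^2/(8\cdot K_2N/2)\asymp(C_1/c^2)(\log N)^{3/2}$ is again large.

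Combining the estimates gives $\mu^m\{\bz:\sup_{f\in W}|L^1_\bz(f)|\ge\epsilon/2\}<1$, so some $\bz_0=(\xi^1,\dots,\xi^m)$ realizes $|L^1_{\bz_0}(f)|\le\epsilon/2$ for every $f\in W$; since $\|f\|_1=1/2$ on $W$, homogeneity rescales this to the two-sided bound stated in the theorem. The only real obstacle is book-keeping: one must check that the range $\epsilon\in[2^{-N},1/2]$ is exactly what keeps $J=O(\log N)$, so that a single choice $\eta_j\asymp\epsilon/\log N$ covers the whole chain, and that the Bessel bound (using only condition $\mathbf B$) rather than the sharper Nikol'skii inequality (\ref{4.3i}) is what keeps the factor $N$ rather than $N^{K_4}$ in the final count.
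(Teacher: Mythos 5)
Your proposal is correct and follows the same route the paper intends: the paper derives Theorem \ref{T4.9e} exactly as Theorem \ref{T3.3e} is derived from the chaining proof of Theorem \ref{T3.3}, replacing the entropy input Theorem \ref{T2.6'} by Theorem \ref{T4.8} (so $n\mapsto\log N$, $|Q|\mapsto N$), replacing the Nikol'skii step (\ref{3.9}) by the Bessel-type bound $\|f\|_\infty\le K_2N\|f\|_1$ coming from condition \textbf{B} and orthonormality, scaling $\eta_j$ by $\epsilon$, inflating $m$ by $\epsilon^{-2}$, and noting that $\epsilon\ge 2^{-N}$ keeps $J\lesssim\log N$. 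Your bookkeeping matches the paper's; the only point worth making fully explicit is the one-line justification of $\|f\|_2\le(K_2N)^{1/2}\|f\|_1$ via $\|f\|_2^2\le\|f\|_\infty\|f\|_1$, which you use implicitly in the first-level estimate.
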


\subsection{Conditional theorem}  

We already pointed out in the proof of Theorem \ref{T3.3} that the only special 
properties of the subspace $\Tr(Q)$, which we used in the proof of Theorem \ref{T3.3}, were stated in Theorem \ref{T2.6} on the entropy numbers $\e_k(\Tr(Q)_1,L_\infty)$. Similarly, in Section \ref{MX} above we used assumptions {\bf A}, {\bf B}, and {\bf C} to prove (constructively) Theorem \ref{T4.8} on the entropy numbers  $\e_k(X_N^1,L_\infty)$ and, then, derived from it Theorem \ref{T4.9}. This encourages us to formulate the following conditional result. 

\begin{Theorem}\label{T4.10} Suppose that a real $N$-dimensional subspace $X_N$ satisfies the condition $(B\ge 1)$
$$
\e_k(X^1_N,L_\infty) \le  B\left\{\begin{array}{ll}  N/k, &\quad k\le N,\\
 2^{-k/N},&\quad k\ge N.\end{array} \right.
$$
Then there exists a set of $m \le C_1NB(\log_2(2N\log_2(8B)))^2$ points $\xi^j\in \Omega$, $j=1,\dots,m$, with large enough absolute constant $C_1$, such that for any $f\in X_N$ 
we have
$$
\frac{1}{2}\|f\|_1 \le \frac{1}{m}\sum_{j=1}^m |f(\xi^j)| \le \frac{3}{2}\|f\|_1.
$$
\end{Theorem}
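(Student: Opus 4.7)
My plan is to run the chaining argument from the proof of Theorem~\ref{T3.3} essentially verbatim, with Theorem~\ref{T2.6'} replaced by the hypothesis of Theorem~\ref{T4.10} and with $|Q|$ replaced by $N$. Set $W:=\{f\in X_N:\|f\|_1=1/2\}$ and $\delta_j:=\e_{2^j}(X_N^1,L_\infty)$, so the hypothesis gives
\[
\delta_j \le B\cdot\begin{cases} N/2^j, & 2^j\le N,\\ 2^{-2^j/N}, & 2^j\ge N.\end{cases}
\]
Let $J$ be the smallest $j$ with $\delta_j\le 1/16$. In the second regime the inequality $B\cdot 2^{-2^J/N}\le 1/16$ forces $2^J\le 2N\log_2(16B)$, hence $J\le \log_2(2N\log_2(16B))$, which is of the form appearing in the statement (up to absorbing additive constants into $C_1$). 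For each $j=1,\dots,J$ I pick a minimal $\delta_j$-net ${\mathcal N}_j\subset W$ in $L_\infty$ of size $N_j\le 2^{2^j}$ and associate to each $f\in W$ the chain $f_J,f_{J-1},\dots,f_1$ of successive nearest-net elements exactly as in the proof of Theorem~\ref{T3.3}.

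The only nontrivial new ingredient is a Nikol'skii-type inequality that plays the role of the trivial bound (\ref{3.9}) in the base step. Apply the hypothesis at $k=1$: $X_N^1$ is covered by two $L_\infty$-balls of radius $BN$ centered at points $y_1,y_2\in X_N^1$. Since $0\in X_N^1$, one center, say $y_1$, satisfies $\|y_1\|_\infty\le BN$; combining $X_N^1=-X_N^1$ with the triangle inequality then also yields $\|y_2\|_\infty\le 3BN$. Hence $\|f\|_\infty\le 4BN\|f\|_1$ for every $f\in X_N$, which is exactly what is needed to control $\|g_j\|_\infty$ when applying Lemma~\ref{L3.1} in the base step of the chain.

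The remaining analysis is routine bookkeeping. Set $\eta:=1/4$ and $\eta_j:=1/(8J)$ so that $\sum_{j=1}^J\eta_j\le 1/8$; the standard union bound (\ref{3.6'}) reduces everything to showing that, for $m=C_1NBJ^2$ with an absolute $C_1$ large enough, each term $N_j\exp(-m\eta_j^2/(16\delta_{j-1}))$ is at most $e^{-2^j}$. The key observation here is that $2^j\delta_{j-1}\le C\,NB$ with an absolute $C$ in both regimes: directly when $2^{j-1}\le N$, and via the uniform bound $u\,2^{-u}\le 1$ for $u=2^{j-1}/N\ge 1$ when $2^{j-1}>N$. Combined with the Nikol'skii-type bound for the base step $j=1$ (where $N_1\le 4$), this makes the total failure probability strictly less than $1$ and produces the required knot set $\bz_0=(\xi^1,\dots,\xi^m)$. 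The one step I expect to require any thought beyond recycling the proof of Theorem~\ref{T3.3} is the derivation of the Nikol'skii-type inequality $\|f\|_\infty\le 4BN\|f\|_1$ from the entropy hypothesis alone, since Theorem~\ref{T4.10} assumes no such inequality a priori.
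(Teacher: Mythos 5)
Your proof is correct and follows exactly the chaining route the paper has in mind; the paper does not actually spell out a proof of Theorem~\ref{T4.10}, remarking only that it follows ``in the same way'' that Theorem~\ref{T3.3} was derived from Theorem~\ref{T2.6}, so you are in effect supplying the missing details. The one genuine gap in that implicit argument is that the base step of the chain (the application of Lemma~\ref{L3.1} to $\sup_{f\in{\mathcal N}_1}|L^1_\bz(f)|$) needs an $L_\infty$--$L_1$ Nikol'skii-type bound, which in Theorem~\ref{T3.3} came from~(\ref{3.9}) and in Theorem~\ref{T4.9} from assumption~{\bf C} but is \emph{not} assumed in Theorem~\ref{T4.10}; you correctly identify this and extract $\|f\|_\infty\le CBN\|f\|_1$ from the entropy hypothesis at $k=1$ (the centers $y_1,y_2$ of the two covering balls lie in $X_N^1$, one near $0$, and symmetry of $X_N^1$ controls the other), which is precisely the right observation. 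The remaining bookkeeping (the bound $J\ll\log_2(2N\log_2 B)$, the uniform estimate $2^j\delta_{j-1}\ll NB$ in both regimes via $u2^{-u}\le 1$, and $N_j\le 2^{2^j}$) matches the paper's template and yields $m\ll NBJ^2$ as claimed.
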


\section{The Marcinkiewicz-type theorem in $L_2$}
\label{L2}

In this section we discuss some known results directly connected with the discretization theorems and demonstrate how recent results on random matrices can be used to obtain the Marcinkiewicz-type theorem in $L_2$.
We begin with formulation of the Rudelson result from \cite{Rud}. In the paper \cite{Rud} it is formulated in terms of submatrices of an orthogonal matrix. We reformulate it in our notations. Let $\Omega_M=\{x^j\}_{j=1}^M$ be a discrete set with the probability measure $\mu(x^j)=1/M$, $j=1,\dots,M$. Assume that 
$\{u_i(x)\}_{i=1}^N$ is a real orthonormal on $\Omega_M$ system satisfying the following condition: for all $j$
\be\label{5.1}
\sum_{i=1}^Nu_i(x^j)^2 \le Nt^2
\ee
with some $t\ge 1$.
Then for every $\ep>0$ there exists a set $J\subset \{1,\dots,M\}$ of indices with cardinality 
\be\label{5.1a}
m:=|J| \le C\frac{t^2}{\ep^2}N\log\frac{Nt^2}{\ep^2}
\ee
such that for any $f=\sum_{i=1}^N c_iu_i$ we have
$$
(1-\ep)\|f\|_2^2 \le \frac{1}{m} \sum_{j\in J} f(x^j)^2 \le (1+\ep)\|f\|_2^2.
$$
In particular, the above result implies that for any orthonormal system $\{u_i\}_{i=1}^N$ on $\Omega_M$, satisfying (\ref{5.1}) we have
$$
{\mathcal U}_N := \sp(u_1,\dots,u_N) \in \cM(m,2)\quad \text{provided}\quad m\ge CN\log N
$$
with large enough $C$.
We note that (\ref{5.1}) is satisfied if the system $\{u_i\}_{i=1}^N$ is uniformly bounded: $\|u_i\|_\infty \le t$, $i=1,\dots,N$. 

We first demonstrate how the Bernstein-type concentration inequality for matrices can be used to prove an analog of the above Rudelson's result for a general $\Omega$. Our proof is based on a different idea than the Rudelson's proof. 
Let $\{u_i\}_{i=1}^N$ be an orthonormal system on $\Omega$, satisfying the condition

{\bf D.} For $x\in \Omega$ we have 
\be\label{5.2a}
w(x):=\sum_{i=1}^N u_i(x)^2 = N.
\ee
 With each $x\in\Omega$ we associate the matrix $G(x) := [u_i(x)u_j(x)]_{i,j=1}^N$. Clearly, $G(x)$ is a symmetric matrix. We will also need the matrix $G(x)^2$. We have for the $(k,l)$ element of $G(x)^2$
 $$
 (G(x)^2)_{k,l} = \sum_{j=1}^N u_k(x)u_j(x)u_j(x)u_l(x) = w(x)u_k(x)u_l(x).
 $$
 Therefore,
 \be\label{5.2}
 G(x)^2 = w(x)G(x)\quad \text{and}\quad \|G(x)\| = w(x).
 \ee
 We use the following Bernstein-type concentration inequality for matrices (see \cite{Tro12}).
 
 \begin{Theorem}\label{T5.1} Let $\{T_k\}_{k=1}^n$ be a sequence of independent random symmetric $N\times N$ matrices. Assume that each $T_k$ satisfies:
 $$
 \bE(T_k) =0 \quad \text{and}\quad \|T_k\| \le R \quad \text{almost surely}.
 $$
 Then for all $\eta\ge 0$
 $$
 \bP\left\{\left\|\sum_{k=1}^n T_k \right\|\ge \eta\right\} \le N\exp\left(-\frac{\eta^2}{2\sigma^2 +(2/3)R\eta}\right)
 $$
 where $\sigma^2 := \left\|\sum_{k=1}^n \bE(T_k^2)\right\|$.
 \end{Theorem}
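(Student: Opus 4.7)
The plan is to follow the matrix Laplace transform method, which is the matrix analogue of the classical scalar Bernstein argument. Write $S := \sum_{k=1}^n T_k$. Since $\|S\| = \max(\lambda_{\max}(S), \lambda_{\max}(-S))$ and $\{-T_k\}$ satisfies the same hypotheses as $\{T_k\}$, it suffices to prove the one-sided bound $\bP\{\lambda_{\max}(S) \ge \eta\} \le N\exp(-\eta^2/(2\sigma^2 + (2/3)R\eta))$ and then multiply by $2$ (absorbed into the theorem's statement, which bounds the norm).

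The first step is a matrix Markov inequality: for any $\theta > 0$,
$$
\bP\{\lambda_{\max}(S) \ge \eta\} \le e^{-\theta\eta}\,\bE\,\operatorname{tr}\exp(\theta S).
$$
This follows because $\lambda_{\max}(S) \ge \eta$ implies $\operatorname{tr}\exp(\theta S) \ge e^{\theta\eta}$, so we apply scalar Markov to the nonnegative random variable $\operatorname{tr}\exp(\theta S)$. The second step is to control $\bE\operatorname{tr}\exp(\theta S)$. Using Lieb's concavity theorem (the concavity of $A \mapsto \operatorname{tr}\exp(H + \log A)$ on positive definite matrices) together with Jensen's inequality applied inductively over the independent summands, one obtains the crucial subadditivity
$$
\bE\,\operatorname{tr}\exp\!\left(\theta \sum_{k=1}^n T_k\right) \le \operatorname{tr}\exp\!\left(\sum_{k=1}^n \log \bE\, e^{\theta T_k}\right).
$$
This step is the main obstacle: the scalar identity $\bE e^{\theta \sum X_k} = \prod \bE e^{\theta X_k}$ fails for noncommuting matrices, and its replacement genuinely requires Lieb's theorem.

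The third step is a pointwise bound on each matrix moment generating function. Using $\|T_k\| \le R$ and $\bE T_k = 0$, expand $e^{\theta T_k} = I + \theta T_k + \sum_{j\ge 2} (\theta T_k)^j/j!$ and use $T_k^j \preceq R^{j-2} T_k^2$ for $j \ge 2$ to obtain
$$
\bE\, e^{\theta T_k} \preceq I + g(\theta)\,\bE T_k^2, \qquad g(\theta) := \frac{e^{\theta R} - 1 - \theta R}{R^2},
$$
and then $\log \bE e^{\theta T_k} \preceq g(\theta)\,\bE T_k^2$ via operator monotonicity of $\log$. Summing and using $\operatorname{tr}\exp(A) \le N \exp(\lambda_{\max}(A))$, the previous inequalities chain into
$$
\bP\{\lambda_{\max}(S) \ge \eta\} \le N \exp\bigl(-\theta\eta + g(\theta)\sigma^2\bigr).
$$

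The final step is to optimize the exponent. Setting $\theta = \eta/(\sigma^2 + R\eta/3)$ and using the elementary inequality $g(\theta)\sigma^2 - \theta\eta \le -\eta^2/(2(\sigma^2 + R\eta/3))$ (obtained by the standard estimate $g(\theta) \le \theta^2/(2(1 - R\theta/3))$ for $\theta R < 3$) yields
$$
\bP\{\lambda_{\max}(S) \ge \eta\} \le N \exp\!\left(-\frac{\eta^2}{2\sigma^2 + (2/3)R\eta}\right),
$$
which is the desired bound (after the symmetric $\pm$ argument absorbs a constant). The only genuine difficulty in the whole scheme is the Lieb-concavity-based subadditivity in Step~2; the remaining manipulations are matrix versions of the familiar scalar Bernstein computation.
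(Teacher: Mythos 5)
The paper does not prove Theorem~\ref{T5.1} at all; it is quoted verbatim (in Temlyakov's notation) from Tropp's paper \cite{Tro12} as a known result, so there is no in-text argument to compare against. Your sketch is the standard matrix Laplace transform proof that Tropp gives: matrix Markov via the trace exponential, Lieb's concavity theorem to get subadditivity of the log of the matrix MGF across independent summands, the Taylor-expansion bound $\bE e^{\theta T_k} \preceq \exp(g(\theta)\,\bE T_k^2)$, and optimization of $\theta$. All of the steps you list are correct and are the genuine content of Tropp's Theorem~1.4 (Matrix Bernstein, bounded case).

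One small but real discrepancy you should not paper over: Tropp's theorem bounds $\lambda_{\max}(\sum T_k)$ with constant $N$ in front. To pass to $\|\sum T_k\|$ one applies the result to both $\{T_k\}$ and $\{-T_k\}$ (for which the hypothesis $\|T_k\|\le R$ suffices, since it dominates both $\lambda_{\max}(T_k)$ and $\lambda_{\max}(-T_k)$), and a union bound then produces $2N$, not $N$. You wrote that this factor of $2$ is ``absorbed into the theorem's statement,'' but it is not: the statement you are asked to prove has $N$, so as written it is off by a factor of $2$. This is a harmless slip in the paper's transcription of Tropp's result (it does not affect any downstream use, since only the $\log N$ in the exponent of $m$ matters), but it is worth being precise that your argument yields $2N\exp(\cdot)$, which is consistent with Tropp and slightly weaker than the displayed bound. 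Otherwise the proposal is sound and matches the intended (cited) proof.
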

 
 We now consider a sequence $T_k := G(x^k)-I$, $k=1,\dots,m$ of independent 
 random symmetric matrices. Orthonormality of the system $\{u_i\}_{i=1}^N$ implies that $\bE(T_k)=0$ for all $k$. Relation (\ref{5.2}) and our assumption {\bf D} imply for all $k$
 \be\label{5.3}
\|T_k\|\le \|G(x^k)\| +1 = N+1=: R.
\ee
 
Denote $T(x):= G(x)-I$ and, using (\ref{5.2a}) and (\ref{5.2}), represent
$$
T(x)^2 = G(x)^2 -2G(x) +I = (N-2)G(x) +I.
$$
Then by the orthonormality of the system $\{u_i\}_{i=1}^N$ we get
$$
\bE(T(x)^2) = (N-1)I
$$
 and, therefore, we obtain
\be\label{5.6}
\|\bE(T^2)\| \le N-1.
\ee
Thus, by Theorem \ref{T5.1} we obtain for $\eta \le 1$
\be\label{5.8}
\bP\left\{\left\|\sum_{k=1}^n (G(x^k)-I) \right\|\ge n\eta\right\} \le N\exp\left(-\frac{n\eta^2}{cN}\right)
\ee
with an absolute constant $c$.

For a set of points $\xi^k\in \Omega$, $k=1,\dots,m$, and $f=\sum_{i=1}^N b_iu_i$ we have
$$
\frac{1}{m}\sum_{k=1}^m f(\xi^k)^2 - \int_\Omega f(x)^2 d\mu = {\mathbf b}^T\left(\frac{1}{m}\sum_{k=1}^m G(\xi^k)-I\right){\mathbf b},
$$
where ${\mathbf b} = (b_1,\dots,b_N)^T$ is the column vector. Therefore,
\be\label{5.8'}
\left|\frac{1}{m}\sum_{k=1}^m f(\xi^k)^2 - \int_\Omega f(x)^2 d\mu \right| \le  
\left\|\frac{1}{m}\sum_{k=1}^m G(\xi^k)-I\right\|\|{\mathbf b}\|_2^2.
\ee

We now make $m = [CN\eta^{-2}\log N]$ with large enough $C$. Then, using 
(\ref{5.8}) with $n=m$, we get the corresponding probability $<1$. Thus, we have proved the following theorem.

\begin{Theorem}\label{T5.2} Let $\{u_i\}_{i=1}^N$ be an orthonormal system, satisfying condition {\bf D}. Then for every $\ep>0$ there exists a set $\{\xi^j\}_{j=1}^m \subset \Omega$ with
$$
m  \le C \ep^{-2}N\log N
$$
such that for any $f=\sum_{i=1}^N c_iu_i$ we have
$$
(1-\ep)\|f\|_2^2 \le \frac{1}{m} \sum_{j=1}^m f(\xi^j)^2 \le (1+\ep)\|f\|_2^2.
$$
\end{Theorem}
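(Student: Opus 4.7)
The plan is to apply the matrix Bernstein inequality (Theorem \ref{T5.1}) to the i.i.d. random matrices $T_k := G(\xi^k) - I$, where $\xi^1,\dots,\xi^m$ are sampled independently from $\mu$, and then translate the resulting operator norm bound on $\frac{1}{m}\sum_k G(\xi^k) - I$ into the desired quadratic-form inequality via (\ref{5.8'}).

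First I verify the hypotheses of Theorem \ref{T5.1}, which is essentially done in the text preceding the statement. Orthonormality of $\{u_i\}$ with respect to $\mu$ gives $\mathbb{E}(G(\xi^k)) = I$, so $\mathbb{E}(T_k) = 0$. Condition \textbf{D} combined with (\ref{5.2}) yields $\|G(\xi^k)\| = w(\xi^k) = N$ almost surely, hence $\|T_k\| \le N+1 =: R$, which is (\ref{5.3}). The algebraic identity $T(x)^2 = (N-2)G(x) + I$ together with orthonormality gives $\mathbb{E}(T(x)^2) = (N-1)I$, so $\sigma^2 = \|\sum_{k=1}^m \mathbb{E}(T_k^2)\| \le m(N-1)$ by (\ref{5.6}). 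Inserting these into Theorem \ref{T5.1} with deviation $m\eta$ and simplifying for $\eta \le 1$ produces the clean bound (\ref{5.8}), namely
$$
\mathbb{P}\!\left\{\Big\|\tfrac{1}{m}\sum_{k=1}^m G(\xi^k) - I\Big\| \ge \eta\right\} \le N\exp\!\left(-\frac{m\eta^2}{cN}\right).
$$

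Second, I use the key identity (\ref{5.8'}): for any $f = \sum_i b_i u_i \in X_N$,
$$
\left|\tfrac{1}{m}\sum_{k=1}^m f(\xi^k)^2 - \|f\|_2^2\right| \le \Big\|\tfrac{1}{m}\sum_{k=1}^m G(\xi^k) - I\Big\|\,\|\mathbf b\|_2^2,
$$
and orthonormality gives $\|\mathbf b\|_2 = \|f\|_2$. Hence, if the operator norm on the right is at most $\epsilon$, the two-sided discretization inequality with constants $1 \pm \epsilon$ holds simultaneously for all $f \in X_N$.

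Third, I choose $m = \lceil C N \epsilon^{-2} \log N\rceil$ with $C$ a sufficiently large absolute constant, so that
$$
N \exp\!\left(-\frac{m\epsilon^2}{cN}\right) < 1.
$$
Then the probability that $\|\frac{1}{m}\sum G(\xi^k) - I\| \ge \epsilon$ is strictly less than $1$, so some realization $\{\xi^j\}_{j=1}^m$ satisfies the operator norm bound, and the theorem follows from the preceding paragraph. There is no real obstacle here beyond the clean probabilistic setup: the heavy lifting is done by the matrix Bernstein inequality of Theorem \ref{T5.1}, and condition \textbf{D} is exactly what is needed to get the deterministic bound $\|T_k\| \le N+1$ which tames the large-deviation denominator.
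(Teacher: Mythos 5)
Your proposal is correct and follows essentially the same route as the paper: you set up the centered matrices $T_k = G(\xi^k)-I$, verify boundedness and variance control via condition \textbf{D} and the identity (\ref{5.2}), invoke Theorem \ref{T5.1} to obtain the tail bound (\ref{5.8}), translate operator norm deviation to the quadratic form via (\ref{5.8'}), and pick $m \asymp N\epsilon^{-2}\log N$ to make the failure probability less than one. The only (correct, minor) addition you make explicit is the observation $\|\mathbf b\|_2 = \|f\|_2$, which the paper leaves implicit.
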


We note that Theorem \ref{T5.2} treats a special case, when (\ref{5.2a}) instead of (\ref{5.1}) is satisfied. This is the case, for instance, for the trigonometric and the Walsh systems. In this special case Theorem \ref{T5.2} is more general and slightly stronger than the Rudelson theorem discussed in the beginning of this section. Theorem \ref{T5.2} provides the Marcinkievicz-type discretization theorem for a general domain $\Omega$ instead of a discrete set $\Omega_M$. Also, in Theorem \ref{T5.2} we have an extra $\log N$ instead of $\log\frac{Nt^2}{\ep^2}$ in (\ref{5.1a}). 

Second, we demonstrate other way of proof, which allows us to replace condition 
{\bf D} by the following more general condition {\bf E}, which is similar to (\ref{5.1}).

{\bf E.} There exists a constant $t$ such that  
\be\label{5.2b}
w(x):=\sum_{i=1}^N u_i(x)^2 \le Nt^2.
\ee

The new way of proof uses the fact that the matrix $G(x)$ is a semi-definite matrix. It is based on the following result (see \cite{Tro12}, Theorem 1.1) on random matrices.

\begin{Theorem}\label{T5.3} Consider a finite sequence $\{T_k\}_{k=1}^m$ of independent, random, self-adjoint matrices with dimension $N$. Assume that 
each random matrix is semi-positive and satisfies 
$$
\lambda_{\max}(T_k) \le R\quad  \text{almost surely}.
$$
Define
$$
s_{\min} := \lambda_{\min}\left(\sum_{k=1}^m \bE(T_k)\right) \quad \text{and}\quad 
s_{\max} := \lambda_{\max}\left(\sum_{k=1}^m \bE(T_k)\right).
$$
Then
$$
\bP\left\{\lambda_{\min}\left(\sum_{k=1}^m T_k\right) \le (1-\eta)s_{\min}\right\} \le 
N\left(\frac{e^{-\eta}}{(1-\eta)^{1-\eta}}\right)^{s_{\min}/R}
$$
for $\eta\in[0,1)$ and for $\eta\ge 0$
$$
\bP\left\{\lambda_{\max}\left(\sum_{k=1}^m T_k\right) \ge (1+\eta)s_{\max}\right\} \le 
N\left(\frac{e^{\eta}}{(1+\eta)^{1+\eta}}\right)^{s_{\max}/R}.
$$
\end{Theorem}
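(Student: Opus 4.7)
The plan is to prove Theorem \ref{T5.3} by the matrix Laplace transform method, which is the standard route to Chernoff-type concentration inequalities for sums of independent random matrices. The first step is to observe that for any $\theta>0$ and any random self-adjoint matrix $S$, Markov's inequality applied to the scalar function $t\mapsto e^{\theta t}$, together with the elementary bound $e^{\la_{\max}(A)}\le \operatorname{tr} e^{A}$ for self-adjoint $A$, gives
$$
\bP\{\la_{\max}(S)\ge a\}\le e^{-\theta a}\,\bE\operatorname{tr}\exp(\theta S),
$$
and analogously, using $t\mapsto e^{-\theta t}$, a matching bound for $\bP\{\la_{\min}(S)\le a\}$ via $\bE\operatorname{tr}\exp(-\theta S)$. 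Setting $S=\sum_{k=1}^m T_k$ reduces the problem to controlling the matrix moment generating function $\bE\operatorname{tr} e^{\theta S}$ and then optimizing over $\theta$.

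The key ingredient, and the heart of the proof, is subadditivity of the matrix cumulant generating function. Using independence of the $T_k$ together with Tropp's master tail bound (itself a consequence of Lieb's concavity theorem applied to $A\mapsto \operatorname{tr}\exp(H+\log A)$), one gets
$$
\bE\operatorname{tr}\exp\left(\theta\sum_{k=1}^m T_k\right)\le \operatorname{tr}\exp\left(\sum_{k=1}^m \log\bE e^{\theta T_k}\right).
$$
This is the operator-theoretic surrogate for the identity $\log\bE e^{\theta\sum X_k}=\sum\log\bE e^{\theta X_k}$ in the scalar case; it reduces the joint MGF estimate to an estimate of each $\bE e^{\theta T_k}$ in the semidefinite order.

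To turn subadditivity into an explicit bound, I would now exploit positive semidefiniteness and the spectral bound $\la_{\max}(T_k)\le R$. The scalar inequality $e^{\theta x}\le 1+g(\theta)x$ with $g(\theta):=(e^{\theta R}-1)/R$ holds on $[0,R]$, and by the functional calculus applied to the matrix $T_k$ (whose spectrum lies in $[0,R]$) it lifts to $\bE e^{\theta T_k}\preceq I+g(\theta)\bE T_k$. Combining $\log(I+A)\preceq A$ for PSD $A$ with the monotonicity of $A\mapsto \operatorname{tr} e^{A}$ under the semidefinite order yields
$$
\bE\operatorname{tr} e^{\theta S}\le N\exp\bigl(g(\theta)\,s_{\max}\bigr),
$$
so that $\bP\{\la_{\max}(S)\ge(1+\eta)s_{\max}\}\le N\exp\bigl(-(1+\eta)\theta s_{\max}+g(\theta)s_{\max}\bigr)$. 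Choosing $\theta=R^{-1}\log(1+\eta)$ minimizes the exponent and produces the stated form $\bigl(e^{\eta}/(1+\eta)^{1+\eta}\bigr)^{s_{\max}/R}$. The lower-tail inequality follows from the parallel calculation applied to $-T_k$, using the matching scalar inequality $e^{-\theta x}\le 1-(1-e^{-\theta R})x/R$ on $[0,R]$.

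The main obstacle is the subadditivity step: because matrix exponentials do not satisfy $e^{A+B}=e^{A}e^{B}$ without commutativity, the classical scalar Chernoff argument does not transfer in a direct way, and one must appeal to Lieb's concavity theorem (or to an iterated Golden--Thompson argument, which in fact only handles $m=2$ and degrades when extended). This is the ingredient whose proof genuinely lies beyond the elementary toolkit; once it is granted, the remaining steps are the natural semidefinite-order lifting of the classical Bernstein/Chernoff calculation.
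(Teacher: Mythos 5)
Your proof outline is correct, and it faithfully recapitulates Tropp's own argument for the matrix Chernoff bound. However, the comparison you are asked to make is somewhat moot: the paper does not prove Theorem~\ref{T5.3} at all, but simply imports it as an external ingredient, citing it as Theorem~1.1 of \cite{Tro12}. So there is no ``paper's own proof'' to compare against; what you have written is essentially a compressed rendition of the proof in the cited reference.

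On the substance: the Laplace-transform reduction via $e^{\la_{\max}(A)}\le\operatorname{tr} e^A$ and Markov, the subadditivity of the matrix cumulant generating function via Lieb's concavity theorem, the lift of the chord bound $e^{\theta x}\le 1+g(\theta)x$ on $[0,R]$ to the semidefinite order by the spectral mapping theorem, and the optimization $\theta=R^{-1}\log(1+\eta)$ (resp.\ $\theta=-R^{-1}\log(1-\eta)$) all check out and reproduce the stated exponents $\bigl(e^{\pm\eta}/(1\pm\eta)^{1\pm\eta}\bigr)^{s_{\max,\min}/R}$. One small point worth making explicit when you pass from $\bE e^{\theta T_k}\preceq I+g(\theta)\bE T_k$ to $\log\bE e^{\theta T_k}\preceq g(\theta)\bE T_k$: you invoke both $\log(I+A)\preceq A$ for PSD $A$ and the operator monotonicity of the matrix logarithm; only the first is named in your sketch, but both are genuinely needed. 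Your remark contrasting Lieb's theorem with the Ahlswede--Winter/Golden--Thompson route and its quantitative degradation is also accurate. In short: a correct proof, but of a result that the paper treats as a black box.
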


As above,   we consider the matrix $G(x) := [u_i(x)u_j(x)]_{i,j=1}^N$. Clearly, $G(x)$ is a symmetric matrix. Consider a sequence $T_k := G(x^k)$, $k=1,\dots,m$ of independent 
 random symmetric matrices. It is easy to see that $T_k$ are semi-positive definite. Orthonormality of the system $\{u_i\}_{i=1}^N$ implies that $\bE(T_k)=I$ for all $k$. This implies that $s_{\min}=s_{\max} = m$. Relation (\ref{5.2}) shows that we can take $R:=Nt^2$. Then Theorem \ref{T5.3} implies for $\eta\le 1$
 \be\label{5.8a}
\bP\left\{\left\|\sum_{k=1}^m (G(x^k)-I) \right\|\ge m\eta\right\} \le N\exp\left(-\frac{m\eta^2}{ct^2N}\right)
\ee
with an absolute constant $c$ (we can take $c=2/\ln 2$). 
Using inequality (\ref{5.8'}), which was used in the above proof of Theorem \ref{T5.2}, we derive from here the following theorem.

\begin{Theorem}\label{T5.4} Let $\{u_i\}_{i=1}^N$ be an orthonormal system, satisfying condition {\bf E}. Then for every $\ep>0$ there exists a set $\{\xi^j\}_{j=1}^m \subset \Omega$ with
$$
m  \le C\frac{t^2}{\ep^2}N\log N
$$
such that for any $f=\sum_{i=1}^N c_iu_i$ we have
$$
(1-\ep)\|f\|_2^2 \le \frac{1}{m} \sum_{j=1}^m f(\xi^j)^2 \le (1+\ep)\|f\|_2^2.
$$
\end{Theorem}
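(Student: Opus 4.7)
The plan is to apply the matrix Chernoff bound (Theorem \ref{T5.3}) to the rank-one random matrices $T_k := G(x^k)$, where $x^1,\dots,x^m$ are sampled independently from $\mu$, and then to convert the resulting operator-norm concentration into the pointwise $L_2$ discretization via the identity already used in the proof of Theorem \ref{T5.2}, namely
$$
\frac{1}{m}\sum_{k=1}^m f(x^k)^2 - \|f\|_2^2 = \mathbf{b}^T\!\left(\frac{1}{m}\sum_{k=1}^m G(x^k)-I\right)\!\mathbf{b},
$$
together with (\ref{5.8'}).

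First I would verify the hypotheses of Theorem \ref{T5.3}. Each $G(x)$ is symmetric and positive semi-definite since $G(x)=\mathbf{u}(x)\mathbf{u}(x)^T$ where $\mathbf{u}(x):=(u_1(x),\dots,u_N(x))^T$; in particular $\lambda_{\max}(G(x))=\|\mathbf{u}(x)\|_2^2=w(x)$. By condition \textbf{E} this gives the deterministic bound
$$
\lambda_{\max}(T_k) \le Nt^2 =: R.
$$
By orthonormality of $\{u_i\}$ the expectation is the Gram matrix $\bE(T_k)=I$, so $s_{\min}=s_{\max}=m$.

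Next, I would apply both tails of Theorem \ref{T5.3} with $\eta=\e\in(0,1)$ and the numerical bounds $e^{-\e}/(1-\e)^{1-\e}\le e^{-\e^2/2}$ and $e^{\e}/(1+\e)^{1+\e}\le e^{-\e^2/3}$ (valid on $[0,1]$), yielding
$$
\bP\!\left\{\Bigl\|\tfrac{1}{m}\sum_{k=1}^m G(x^k)-I\Bigr\|\ge \e\right\}
\le 2N\exp\!\left(-\frac{c\,m\e^2}{Nt^2}\right)
$$
for an absolute constant $c>0$; this is exactly the content of (\ref{5.8a}) applied to both the $\lambda_{\min}$ and $\lambda_{\max}$ sides. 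Choosing
$$
m := \left\lceil C\,t^2\e^{-2} N\log N \right\rceil
$$
with $C$ large enough makes this probability strictly less than $1$. Hence there exist $\xi^1,\dots,\xi^m\in\Omega$ such that $\bigl\|\tfrac{1}{m}\sum_{k=1}^m G(\xi^k)-I\bigr\|\le \e$, and then (\ref{5.8'}) together with $\|\mathbf{b}\|_2^2=\|f\|_2^2$ (again by orthonormality) gives the desired two-sided inequality.

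The main conceptual point, and the only place where condition \textbf{E} enters, is the uniform upper bound $\lambda_{\max}(G(x))\le Nt^2$; everything else is algebraic (rank-one structure, $\bE(T_k)=I$) and does not require the exact equality $w(x)=N$ used in Theorem \ref{T5.2}. Thus the argument subsumes the previous one and replaces the Bernstein-type ingredient (which needed a second-moment estimate) by the matrix Chernoff inequality, which is tailored to positive semi-definite summands and therefore bypasses the computation of $\bE(T^2)$ altogether. No genuinely hard step remains; the only care needed is in tracking the absolute constant $c$ coming from the elementary inequalities on the Chernoff exponent.
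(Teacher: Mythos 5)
Your proposal is correct and follows essentially the same route as the paper: apply the matrix Chernoff bound (Theorem \ref{T5.3}) to the positive semi-definite rank-one matrices $T_k=G(x^k)$ with $R=Nt^2$, deduce the operator-norm concentration of $\tfrac{1}{m}\sum_k G(x^k)-I$, and transfer it to the quadratic form via (\ref{5.8'}). The only cosmetic differences are that you track the factor $2$ from the two tails explicitly and write out the elementary exponent bounds, whereas the paper absorbs these into the absolute constant $c$ in (\ref{5.8a}).
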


We note that Theorem \ref{T5.4} is more general and slightly stronger than the Rudelson theorem discussed in the beginning of this section. Theorem \ref{T5.4} provides the Marcinkievicz-type discretization theorem for a general domain $\Omega$ instead of a discrete set $\Omega_M$. Also, in Theorem \ref{T5.4} we have an extra $\log N$ instead of $\log\frac{Nt^2}{\ep^2}$ in (\ref{5.1a}). 

{\bf The Marcinkiewicz theorem and sparse approximation.} Our above argument, in particular, inequality (\ref{5.8'}), shows that the Marcinkiewicz-type 
discretization theorem in $L_2$ is closely related with approximation of the identity matrix $I$ by an $m$-term approximant of the form $\frac{1}{m}\sum_{k=1}^m G(\xi^k)$ in the operator norm from $\ell^N_2$ to $\ell^N_2$ (spectral norm).
Therefore, we can consider the following sparse approximation problem. 
Assume that the system $\{u_i(x)\}_{i=1}^N$ satisfies (\ref{5.2b}) and consider 
the dictionary
$$
\D^u := \{g_x\}_{x\in\Omega},\quad g_x:= G(x)(Nt^2)^{-1},\quad G(x):=[u_i(x)u_j(x)]_{i,j=1}^N.
$$
Then condition (\ref{5.2b}) guarantees that for the Frobenius norm of $g_x$ we have
\be\label{5.11}
\|g_x\|_F = w(x)(Nt^2)^{-1} \le 1.
\ee
Our assumption on the orthonormality of the system $\{u_i\}_{i=1}^N$ gives  
$$
I = \int_\Omega G(x)d\mu = Nt^2\int_\Omega g_x d\mu,
$$
which implies that $I \in A_1(\D^u,Nt^2)$. Consider the Hilbert space 
$H$ to be a closure in the Frobenius norm of  $\sp\{g_x, x\in\Omega\}$ with the inner product generated by the Frobenius norm: for $A=[a_{i,j}]_{i,j=1}^N$ and 
$B=[b_{i,j}]_{i,j=1}^N$ 
$$
\<A,B\> = \sum_{i,j=1}^N a_{i,j}b_{i,j}
$$
in case of real matrices (with standard modification in case of complex matrices). 

By Theorem \ref{T4.1} for any $m\in \N$ we constructively find (by the RGA) 
points $\xi^1,\dots,\xi^m$ such that 
\be\label{5.12}
\left\|\frac{1}{m}\sum_{k=1}^m G(\xi^k)-I\right\|_F \le 2Nt^2 m^{-1/2}.
\ee
Taking into account the inequality $\|A\|\le \|A\|_F$ we get from here and from (\ref{5.8'})
the following proposition.

\begin{Proposition}\label{P5.1} Let $\{u_i\}_{i=1}^N$ be an orthonormal system, satisfying condition {\bf E}. Then there exists a constructive set $\{\xi^j\}_{j=1}^m \subset \Omega$ with $m\le C(t)N^2$
such that for any $f=\sum_{i=1}^N c_iu_i$ we have
$$
\frac{1}{2}\|f\|_2^2 \le \frac{1}{m} \sum_{j=1}^m f(\xi^j)^2 \le \frac{3}{2}\|f\|_2^2.
$$
\end{Proposition}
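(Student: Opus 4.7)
The plan is to apply the Relaxed Greedy Algorithm (Theorem \ref{T4.1}) to approximate the identity matrix $I$ by an $m$-term combination of matrices $G(\xi^k)$, and then convert the resulting Frobenius-norm bound into an operator-norm bound, which by inequality (\ref{5.8'}) translates directly into the desired Marcinkiewicz inequality. The entire proof is essentially assembled from pieces already developed in the preceding discussion.

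First I would set up the Hilbert space $H$ as the Frobenius-norm closure of $\sp\{g_x : x \in \Omega\}$, with dictionary $\D^u = \{g_x\}_{x\in\Omega}$. Condition {\bf E} together with (\ref{5.11}) guarantees $\|g_x\|_F \le 1$, so this is indeed a dictionary. The orthonormality of $\{u_i\}$ gives $I = Nt^2 \int_\Omega g_x\, d\mu$, which places $I \in A_1(\D^u, Nt^2)$, i.e.\ $I/(Nt^2) \in A_1((\D^u)^\pm)$. Thus RGA is applicable to $f := I/(Nt^2)$, and by Theorem \ref{T4.1} we can constructively produce $\xi^1,\dots,\xi^m \in \Omega$ with
$$
\left\|\frac{1}{m}\sum_{k=1}^m G(\xi^k) - I\right\|_F \le 2Nt^2 m^{-1/2},
$$
which is precisely (\ref{5.12}).

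Next I would choose $m$ so that the right-hand side is at most $1/2$; namely, $m = \lceil 16 N^2 t^4 \rceil$, so $m \le C(t) N^2$. Using $\|A\| \le \|A\|_F$ for the spectral norm, we then have
$$
\left\|\frac{1}{m}\sum_{k=1}^m G(\xi^k) - I\right\| \le \tfrac{1}{2}.
$$
For $f = \sum_i b_i u_i$, orthonormality gives $\|f\|_2^2 = \|\mathbf{b}\|_2^2$, so (\ref{5.8'}) yields
$$
\left|\frac{1}{m}\sum_{k=1}^m f(\xi^k)^2 - \|f\|_2^2 \right| \le \tfrac{1}{2} \|f\|_2^2,
$$
which is exactly the two-sided bound in the statement.

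There is no real obstacle: all the heavy lifting has already been done by Theorem \ref{T4.1} and by the operator-theoretic identity (\ref{5.8'}). The only points worth verifying carefully are the dictionary normalization $\|g_x\|_F \le 1$ (which requires {\bf E}, not just {\bf D}) and the fact that $I \in A_1(\D^u, Nt^2)$, both of which follow from short direct computations using the orthonormality relation $\int_\Omega u_i u_j\, d\mu = \delta_{ij}$. The quadratic-in-$N$ bound $m \le C(t)N^2$ is inherent to the $1/\sqrt{m}$ rate of RGA and the $Nt^2$ size of the convex-hull certificate for $I$; any improvement would require a sharper sparse-approximation tool than Theorem \ref{T4.1}.
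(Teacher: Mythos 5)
Your proof is correct and follows essentially the same route as the paper: apply the Relaxed Greedy Algorithm to $I/(Nt^2) \in A_1((\D^u)^\pm)$ in the Frobenius-norm Hilbert space to get the $m$-term bound (\ref{5.12}), pass from Frobenius to spectral norm via $\|A\|\le\|A\|_F$, choose $m \asymp N^2 t^4$ so the error is at most $1/2$, and conclude through (\ref{5.8'}). The only point worth noting is that you correctly identified that it is condition {\bf E}, not {\bf D}, that is needed for the normalization $\|g_x\|_F\le 1$, matching the paper's hypothesis.
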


{\bf The Marcinkiewicz-type theorem with weights.} We now comment on a recent breakthrough result by J. Batson, D.A. Spielman, and N. Srivastava \cite{BSS}. We formulate their result in our notations. Let as above $\Omega_M=\{x^j\}_{j=1}^M$ be a discrete set with the probability measure $\mu(x^j)=1/M$, $j=1,\dots,M$. Assume that 
$\{u_i(x)\}_{i=1}^N$ is a real orthonormal on $\Omega_M$ system. Then for any 
number $d>1$ there exist a set of weights $w_j\ge 0$ such that $|\{j: w_j\neq 0\}| \le dN$ so that for any $f\in \sp\{u_1,\dots,u_N\}$ we have
$$
\|f\|_2^2 \le \sum_{j=1}^M w_jf(x^j)^2 \le \frac{d+1+2\sqrt{d}}{d+1-2\sqrt{d}}\|f\|_2^2.
$$
The proof of this result is based on a delicate study of the $m$-term approximation of the identity matrix $I$ with respect to the system 
$\D := \{G(x)\}_{x\in \Omega}$, $G(x):=[u_i(x)u_j(x)]_{i,j=1}^N$ in the spectral norm. The authors of \cite{BSS} control the change of the maximal and minimal eigenvalues of a matrix, when they add a rank one matrix of the form $wG(x)$. 
Their proof provides an algorithm for construction of the weights $\{w_j\}$. 
In particular, this implies that 
$$
X_N(\Omega_M) \in \cM^w(m,2,\epsilon)\quad \text{provided} \quad m \ge CN\epsilon^{-2}
$$
with large enough $C$.

In this section we discussed two deep general results -- the Rudelson theorem and the Batson-Spielman-Srivastava theorem -- about submatrices of orthogonal matrices, which 
provide very good Marcinkiewicz-type discretization theorems for $L_2$.
The reader can find a corresponding historical comments in 
\cite{Rud}. We also refer the reader to the paper \cite{Ka} for a discussion of a
recent outstanding progress on the theory of submatrices of orthogonal matrices. 

\section{Discussion}

As we pointed out in the Introduction the main results of this paper are on the 
Marcinkiewicz-type discretization theorems in $L_1$. We proved here that under certain conditions on a subspace $X_N$ we can get the corresponding discretization theorems with the number of knots $m\ll N(\log N)^{7/2}$. This result is only away from the ideal case $m=N$ by the $(\log N)^{7/2}$ factor. 
We point out that the situation with the discretization theorems in the $L_\infty$ case is fundamentally different. A very nontrivial surprising negative result was proved for the $L_\infty$ case (see \cite{KT3}, \cite{KT4}, and \cite{KaTe03}). The authors proved that the necessary condition for
$\Tr(Q_n)\in\cM(m,\infty)$ is $m\gg |Q_n|^{1+c}$ with absolute constant $c>0$.

Theorem \ref{T4.10} shows that an important ingredient of our technique of proving the Marcinkiewicz discretization theorems in $L_1$ consists in the study of the entropy numbers $\e_k(X^1_N,L_\infty)$. We note that this problem is a nontrivial problem by itself. We demonstrate this on the example of the trigonometric polynomials. It is proved in \cite{VT156} that in the case $d=2$
we have
\be\label{6.1}
\e_k(\Tr( Q_n)_1,L_\infty)\ll  n^{1/2} \left\{\begin{array}{ll} (| Q_n|/k) \log (4| Q_n|/k), &\quad k\le 2| Q_n|,\\
2^{-k/(2| Q_n|)},&\quad k\ge 2| Q_n|.\end{array} \right.
\ee
The proof of estimate (\ref{6.1}) is based on an analog of the Small Ball Inequality for the trigonometric system proved for the wavelet type system (see \cite{VT156}). This proof uses the two-dimensional specific features of the problem and we do not know how to extend this proof to the case $d>2$. 
Estimate (\ref{6.1}) is used in the proof of the upper bounds in Theorem \ref{T2.7}. Theorem \ref{T2.7} gives the right order of the entropy numbers for 
the classes of mixed smoothness. This means that (\ref{6.1}) cannot be substantially improved. The trivial inequality $\log (4| Q_n|/k) \ll n$ shows that 
(\ref{6.1}) implies the following estimate
\be\label{6.2}
\e_k(\Tr( Q_n)_1,L_\infty)\ll  n^{3/2} \left\{\begin{array}{ll} | Q_n|/k , &\quad k\le 2| Q_n|,\\
2^{-k/(2| Q_n|)},&\quad k\ge 2| Q_n|.\end{array} \right.
\ee
Estimate (\ref{6.2}) is not as good as (\ref{6.1}) in application for proving the upper bounds of the entropy numbers of smoothness classes. For instance,
instead of the bound in Theorem \ref{T2.7} use of (\ref{6.2}) will give
$$
\e_k(\bW^{a,b}_1,L_\infty) \ll k^{-a}(\log k)^{a+b+3/2}.
$$
However, it turns out that in application to the Marcinkiewicz-type discretization theorems estimates (\ref{6.1}) and (\ref{6.2}) give the same bounds on the number of knots $m\ll |Q_n|n^{7/2}$ (see Theorem \ref{T3.1} and Theorem \ref{T3.3}). 

As we pointed out above we do not have an extension of (\ref{6.1}) to the case 
$d>2$. A somewhat straight forward technique presented in \cite{VT160} gives the following result for all $d$
\be\label{6.3}
\e_k(\Tr( Q_n)_1,L_\infty)\ll  n^{d/2} \left\{\begin{array}{ll} (| Q_n|/k) \log (4| Q_n|/k), &\quad k\le 2| Q_n|,\\
2^{-k/(2| Q_n|)},&\quad k\ge 2| Q_n|.\end{array} \right.
\ee
This result is used in \cite{VT160} to prove Theorem \ref{T3.2}. An interesting 
contribution of this paper is the proof of (\ref{6.2}) for all $d$ and for  rather general sets $\Tr(Q)_1$ instead of $\Tr(Q_n)_1$. An important new ingredient here is the use of dictionary $\D^2(Q)$, consisting of shifts of normalized Dirichlet kernels associated with $Q$, in $m$-term approximations. 
Certainly, it would be nice to understand, even in the special case of the hyperbolic cross polynomials $\Tr(Q_n)$, if the embedding $\Tr(Q_n) \in \cM(m,1)$ with $m\asymp |Q_n|$ holds. Results of this paper only show that the above embedding holds with $m \gg |Q_n|n^{7/2}$. We got the extra factor $n^{7/2}$ as a result of using (\ref{6.2}), which contributed $n^{3/2}$, and of using the chaining technique, which contributed $n^2$.

\end{document}